\newcommand{\theoname}{Theorem}
\newcommand{\lemmname}{Lemma}
\newcommand{\coroname}{Corollary}
\newcommand{\propname}{Proposition}
\newcommand{\definame}{Definition}
\newcommand{\remkname}{Remark}
\newcommand{\explname}{Example}
\theoremstyle{plain}
\newtheorem{theorem}{\theoname}[section]
\newtheorem{lemma}[theorem]{\lemmname}
\newtheorem{corollary}[theorem]{\coroname}
\newtheorem{proposition}[theorem]{\propname}
\newtheorem*{claim}{Claim}
\theoremstyle{definition}
\newtheorem{definition}[theorem]{\definame}
\newtheorem{remark}[theorem]{\remkname}
\DeclareMathOperator{\Lip}{Lip}
\DeclareMathOperator{\conv}{co}
\DeclareMathOperator{\diverg}{div}
\DeclareMathOperator{\supp}{supp}
\DeclareMathOperator{\Proj}{Pr}
\DeclareMathOperator{\diff}{d\!}
\DeclarePairedDelimiter{\abs}{\lvert}{\rvert}
\newcommand{\suchthat}{\ifnum\currentgrouptype=16 \mathrel{}\middle|\mathrel{}\else\mid\fi}
\DeclareMathOperator{\MFG}{MFG}
\DeclareMathOperator{\OCP}{OCP}
\DeclareMathOperator{\Adm}{Adm}
\DeclareMathOperator{\Opt}{Opt}
\DeclareMathOperator{\In}{In}
\DeclareMathOperator{\OOpt}{\mathbf{Opt}}
\numberwithin{figure}{section}
\newcommand{\customlabel}[2]{%
   \protected@write \@auxout {}{\string \newlabel {#1}{{#2}{\thepage}{#2}{#1}{}} }%
   \hypertarget{#1}{#2}
}
\begin{document}

\setlength{\parskip}{1pt plus 1pt minus 1pt} 
\newlist{hypotheses}{enumerate}{1}

\setlist[enumerate, 1]{label={\textnormal{(\alph*)}}, ref={(\alph*)}, leftmargin=*}
\setlist[enumerate, 2]{label={\textnormal{(\roman*)}}, ref={(\roman*)}}
\setlist[description, 1]{leftmargin=0pt, itemindent=*}
\setlist[itemize, 1]{label={\textbullet}, leftmargin=0pt, itemindent=*}
\setlist[hypotheses]{label={\textup{(H\arabic*)}}, leftmargin=*}

\title{Nonsmooth mean field games with state constraints}

\author{Saeed Sadeghi Arjmand}
\address{CMLS, \'Ecole Polytechnique, CNRS, Universit\'e Paris-Saclay, 91128, Palaiseau, France, \& Universit\'e Paris-Saclay, CNRS, CentraleSup\'elec, Inria, Laboratoire des signaux et syst\`emes, 91190, Gif-sur-Yvette, France.}
\email{saeed.sadeghi-arjmand@polytechnique.edu}

\author{Guilherme Mazanti}
\address{Universit\'e Paris-Saclay, CNRS, CentraleSup\'elec, Inria, Laboratoire des signaux et syst\`emes, 91190, Gif-sur-Yvette, France.}
\email{guilherme.mazanti@inria.fr}

\begin{abstract}
In this paper, we consider a mean field game model inspired by crowd motion where agents aim to reach a closed set, called target set, in minimal time. Congestion phenomena are modeled through a constraint on the velocity of an agent that depends on the average density of agents around their position. The model is considered in the presence of state constraints: roughly speaking, these constraints may model walls, columns, fences, hedges, or other kinds of obstacles at the boundary of the domain which agents cannot cross. After providing a more detailed description of the model, the paper recalls some previous results on the existence of equilibria for such games and presents the main difficulties that arise due to the presence of state constraints. Our main contribution is to show that equilibria of the game satisfy a system of coupled partial differential equations, known mean field game system, thanks to recent techniques to characterize optimal controls in the presence of state constraints. These techniques not only allow to deal with state constraints but also require very few regularity assumptions on the dynamics of the agents.
\end{abstract}

\keywords{Mean field games, optimal control, minimal time, nonsmooth analysis, state constraints, MFG system, congestion games.}

\subjclass[2020]{49N80, 35Q89, 49K15, 49N60, 35A01}

\maketitle
\hypersetup{pdftitle={Nonsmooth mean field games with state constraints}, pdfauthor={Saeed Sadeghi Arjmand and Guilherme Mazanti}}
\tableofcontents

\newcommand\blfootnote[1]{%
  \begingroup
  \renewcommand\thefootnote{}\footnote{#1}%
  \addtocounter{footnote}{-1}%
  \endgroup
}

\blfootnote{G.M.\ was partially supported by ANR PIA funding: ANR-20-IDEES-0002.}

\section{Introduction}

Mean field games (MFGs for short) were first introduced around 2006 by Lasry and Lions \cite{LasryLionen, LasryLionsfr1, LasryLionsfr2} and independently by Caines, Huang, and Malhamé \cite{HuangMalhame, 2HuangMalhame, Huang2003Individual}, motivated by problems in economics and engineering and based on some previous works on games with infinitely many players, such as those from \cite{Aumann1964Markets, Aumann1974Values, Jovanovic1988Anonymous}. MFGs are differential games with a continuum of players, assumed to be rational, indistinguishable, individually negligible, and influenced only by some ``average'' behavior of other players through a ``mean-field type'' interaction. Since their introduction, MFGs have been studied both in connection with several applications and from a theoretical point of view, in which the main goals are typically proving the existence of equilibria, characterizing such equilibria as solutions to a system of partial differential equations, called \emph{MFG system}, or studying the connections between MFGs and games with a large (but finite) number of symmetric players. We refer to \cite{Achdou2020Mean, Carmona2018ProbabilisticI, CardaliaguetNotes, Carmona2018ProbabilisticII, Cardaliaguet2019Master, Gomes2016Regularity} for more details and further references on mean field games. In this paper, we use the words ``players'' and ``agents'' interchangeably to refer to people taking part in the game.

Most works on mean field games consider either first- or second-order mean field games. First-order MFGs, also known as deterministic mean field games, usually assume that players' dynamics are described by a deterministic control system, and their equilibria are characterized by a system of first-order partial differential equations, whereas second-order MFGs, also known as probabilistic mean field games, consider that players' dynamics are determined by a stochastic control system, typically with an additive Brownian motion modeling a random drift, and their equilibria are typically described by a system of second-order partial differential equations.

This paper considers a class of first-order mean field games inspired by crowd motion in which agents are assumed to remain inside of a specific domain while their goal is to arrive at a given target set in minimal time. In order to model congestion, the speed of each agent is constrained by a function depending on the position of the agent and on the distribution of all agents.

Crowd motion has been the subject of a very large number of works in the literature from different points of view, motivated not only by understanding but also by controlling and optimizing the crowd behavior (see, e.g., \cite{CristaniPiccoli, Gibelli2018Crowd, DhelbingIfarkas, Helbing1995Social, LFHenderson, Maury2019Crowds, Muntean2014Collective, PiccoliBenedettoTosin, Rosini2013Macroscopic, Hughes2002Continuum, Hughes2003Flow}). The natural framework for a mean field game modeling crowd motion is to adopt a \emph{macroscopic} modeling of crowd, i.e., to describe the crowd at a given time $t$ as infinitely many agents represented by a measure $m_t$ on the space of possible positions, which evolves according to some conservation law, typically a continuity equation of the form $\partial_t m + \diverg(m V) = 0$, where $V$ is the velocity field followed by the agents. While most macroscopic crowd motion models consider a given velocity field $V$ constructed from modeling assumptions, the mean field game approach consists instead in considering that each agent will choose their trajectory by solving some optimal control problem, and the velocity field $V$ is a consequence of the optimal choices of the agents.

Up to the authors' knowledge, the first work to be fully dedicated to a mean field game model for crowd motion is \cite{Lachapelle2011Mean}, which proposes an MFG model for a two-population crowd with trajectories perturbed by additive Brownian motion and considers both their stationary distributions and their evolution on a prescribed time interval. Other works have later proposed MFG models for crowd motion taking into account different characteristics, such as \cite{Burger2013Mean}, which considers the fast exit of a crowd and proposes a mean field game model which is studied numerically; \cite{CardaliaguetPierre2}, which is not originally motivated by the modeling of crowd motion but considers a MFG model with a density constraint, which is a natural assumption in some crowd motion models; \cite{BenamouCarlierSantam}, which presents numerical simulations for some variational mean field games related to crowd motion; or also \cite{CristianiGeneralized}, which provides a generalized MFG model for pedestrians with limited predictive abilities.

The present paper considers the MFG model inspired by crowd motion introduced in \cite{Mazanti2019Minimal} (described in details in Section~\ref{SecModel} below), which contains two major differences with respect to previous MFGs for crowd motions and also to most MFG models in general. Firstly, the model from \cite{Mazanti2019Minimal} assumes that each agent solves an optimal control problem with free final time, and actually that the optimization criterion of each agent is to minimize their arrival time at a certain target set, after which they quit the game. This is in contrast with most of the MFG literature, which usually considers either optimization criteria with a given and known finite final time or infinite-horizon optimization criteria. Secondly, congestion is modeled in \cite{Mazanti2019Minimal} by imposing a maximal speed for each agent which depends on their position and the distribution of other agents, while, in most of other MFG works, agents are allowed to choose their speed without constraints, but high speeds and congestion are instead penalized in the cost function. The motivation of \cite{Mazanti2019Minimal} to impose a constraint on the speed of agents is to model high-congestion situations in which an agent may be unable to move faster since other agents in front of them may work as a physical barrier which cannot be crossed by simply paying a higher ``cost''.

The main results of \cite{Mazanti2019Minimal} are the existence of equilibria of the proposed MFG model and the characterization of equilibria through an MFG system. Even though existence of equilibria is proved in \cite{Mazanti2019Minimal} under rather general assumptions, their characterization through an MFG system is only shown in the case where the target set of the agents is the whole boundary of the compact domain in which they evolve, which avoids the presence of state constraints in the minimal-time optimal control problem solved by each agent. This is a very restrictive assumption for a MFG model for crowd motion, since crowds often evolve in domains with boundaries which cannot be crossed by the agents, such as walls, columns, fences, hedges, or other obstacles. However, from a technical point of view, the major difficulty in analyzing optimal control problems with state constraints is that their value functions may fail to be semiconcave (see, e.g., \cite[Example~4.4]{CannarsaPiermarcoCastelpietra}), the latter property being important in the characterization of optimal controls (see, e.g., \cite{CannarsaPiermarcoSinestrari}), which is a key step in obtaining the MFG system in \cite{Mazanti2019Minimal}. Other works, such as \cite{Dweik2020Sharp, DucasseSecond, SadeghiMulti}, have further explored the model from \cite{Mazanti2019Minimal} and related models, but none of those works consider the case of MFGs with state constraints.

Studying optimal control problems and mean field games with state constraints turns out to be a challenging problem due to the possible lack of semiconcavity of the value function. The series of papers \cite{CannarsaPiermarco, Cannarsa2019C11, CannarsaMean} represent an important step in the study of MFGs with state constraints: in those works, the authors prove that, for the optimal control problem they consider, even though the corresponding value function may fail to be semiconcave in the classical sense, i.e., with a \emph{linear} modulus of semiconcavity, the value function is still semiconcave with a \emph{fractional} modulus of semiconcavity, which is sufficient to obtain additional properties of optimal trajectories allowing to characterize optimal controls. The fractional semiconcavity proved in \cite{CannarsaPiermarco} is global in the space variable, which is known not to be the case for some minimal-time optimal control problems. This paper follows a different strategy and proves the required additional properties of optimal trajectories without relying on the semiconcavity of the value function, which has also the additional advantage of requiring fewer regularity properties on the dynamics of the control system satisfied by each agent.

The main contribution of the present paper is thus to characterize optimal controls and deduce the MFG system for the model of \cite{Mazanti2019Minimal} under state constraints, which is an important step towards the study of MFG models for crowd motion with more realistic assumptions. A first step in the characterization of optimal controls is to apply Pontryagin Maximum Principle in order to deduce additional information on optimal trajectories. To do so, we adapt the penalization technique used in \cite{CannarsaCastelpietraCardaliaguet}: we transform the minimal-time optimal control problem with state constraints into a penalized optimal control problem without state constraints and show that, similarly to \cite{CannarsaCastelpietraCardaliaguet}, if the penalization parameter is small enough, optimal trajectories of the penalized problem coincide with optimal trajectories of the original problem (see Theorem~\ref{thm:Opt-eps-equals-Opt}), which allows us to deduce properties of the optimal trajectories of the original optimal control problem by applying Pontryagin Maximum Principle to the penalized problem (see Corollary~\ref{coro:smooth}).

With those properties of optimal trajectories, we then proceed to the study of optimal controls by following the arguments used in the recent paper \cite{SadeghiMulti}: our main results are Theorem~\ref{Thm viscosity boundary cond}, which provides a boundary condition for the value function of the optimal control problem solved by each agent; Theorem~\ref{thm Ut_0,x_0}, which characterizes the optimal control at a given point as the direction of greatest decrease of the value function by adapting the arguments of \cite[Theorem~4.14]{SadeghiMulti} to the case with state constraints; and Theorem~\ref{Thm MFG system}, which shows that equilibria of our MFG model satisfy the MFG system.

The present paper is an extended version of \cite{Sadeghi2021Characterization}, which announced most of the results from this paper and provided some details of the strategy of the proofs of the main results. With respect to that reference, this paper works under weaker assumptions on the dynamics of the control system (compare \ref{HypoOCP-k-Bound}--\ref{HypoMFG-K-Lip} below with \cite[(H2)--(H3)]{Sadeghi2021Characterization}), which renders some proofs more technically involved and require different proof strategies at some points, such as in Lemma~\ref{lemm-varphi-Lipschitz} and for most of the results of Section~\ref{sec:penalized}. This paper also provides a more detailed introduction to the subject, details proofs that had been omitted or only sketched in \cite{Sadeghi2021Characterization} (and in particular the results in Section~\ref{sec:charact-opt-controls}), and provides additional remarks and comments on the results.

Our paper is organized as follows. Some notations and standard definitions are provided in Section~\ref{SecModel}, together with the precise description of the optimal control problem and the mean field game model considered in this paper, and the list of hypotheses used here. Section~\ref{sec:prelim} presents preliminary results on the optimal control problem and the mean field game, most of which are either easy to prove or already present in the literature. The major new contributions in Section~\ref{sec:prelim} are the proof of Lipschitz continuity of the value function of the optimal control problem under the weaker assumptions \ref{HypoOCP-k-Bound} and \ref{HypoOCP-k-Lip} as well as the alternative characterization of equilibria from Proposition~\ref{prop:equilibrium-support}, which also holds for other mean field game models (see, e.g., \cite[Lemma~3.4]{SantambrogioCuckerSmale}).

The main results of this paper are provided in Sections~\ref{More on OPT} and \ref{sec MFG system}. Section~\ref{sec:penalized} uses the strategy of \cite{CannarsaCastelpietraCardaliaguet} to study an optimal control problem with state constraints through a penalized optimal control problem without state constraints, adapting the techniques of that reference to our setting in order to show that optimal trajectories of the original problem coincide with that of the penalized problem if the penalization parameter is small enough (Theorem~\ref{thm:Opt-eps-equals-Opt}). Section~\ref{sec:boundary-varphi} provides a boundary condition for the value function of the optimal control problem in the part of the boundary which is not in the target set. We then provide, in Section~\ref{sec:charact-opt-controls}, the characterization of optimal controls as directions of maximal descent of the value function, and we use this characterization in Section~\ref{sec:normalized-grad} to introduce the notion of normalized gradient and provide its main properties. The main result concerning the mean field game model of interest is the fact that its equilibria satisfy a system of PDEs, shown in Section~\ref{sec MFG system}.

\section{Notations and definitions}
\label{SecModel}

\subsection{General notations}

In this paper, $\mathbbm N$ denotes the set of positive integers, $d$ is a fixed positive integer, the sets of nonnegative and positive real numbers are denoted respectively by $\mathbbm R_+$ and $\mathbbm R^\ast_+$, $\mathbbm R^d$ is endowed with the usual Euclidean norm $\abs{\cdot}$, and the unit sphere is denoted by $\mathbbm S^{d-1}$. For $A \subset \mathbbm R^d$, $\bar A$ denotes its closure, $\partial A$ denotes its boundary, and $\conv A$ denotes its convex hull. For any $x \in \mathbbm R^d$ and $r \geq 0$, $B(x, r)$ (resp., $\bar B(x, r)$) denotes\footnote{In accordance with the previous notation, $\bar B(x, r)$ is the closure of $B(x, r)$ whenever $r > 0$. However, our notation is slightly ambiguous for $r = 0$: $B(x, 0) = \varnothing$, but $\bar B(x, 0) = \{x\}$ is not the closure of $B(x, 0)$. All along the paper, we use the convention that, when referring to balls, the notation $\bar B(x, r)$ is the closed ball centered at $x$ and with radius $r$. The same convention applies to the notation $\bar B_r$, and we remark that $\bar B$ is unambiguous.} the open (resp., closed) ball centered at $x$ and with radius $r$. We denote this ball simply by $B_r$ (resp., $\bar B_r$) if $x = 0$ and by $B$ (resp., $\bar B$) if $x = 0$ and $r = 1$, i.e., $B$ (resp., $\bar B$) is the open (resp., closed) unit ball centered at the origin.

For two sets $A$ and $B$, a set-valued map from $A$ to $B$ is a map $F$ that associates, with each $x \in A$, a (possibly empty) set $F(x) \subset B$. We use the notation $F: A \rightrightarrows B$ to indicate that $F$ is a set-valued map from $A$ to $B$.

Given two metric spaces $X$ and $Y$ and a constant $M > 0$, $\mathbf C(X; Y)$, $\Lip(X; Y)$, and $\Lip_M(X; Y)$ denote, respectively, the set of all continuous functions from $X$ to $Y$, the set of all Lipschitz continuous functions from $X$ to $Y$, and the subset of $\Lip(X; Y)$ containing only those functions whose Lipschitz constant is at most $M$. We will often use these spaces to represent time-dependent functions defined for nonnegative times, in which case $X = \mathbbm R_+$ and, for simplicity, we omit $X$ from the previous notations, writing simply $\mathbf C(Y)$, $\Lip(Y)$, and $\Lip_M(Y)$, respectively. When $X \subset \mathbbm R^k$ and $Y \subset \mathbbm R^m$ for some positive integers $k$ and $m$, we also consider the set $\mathbf C^{1}(X; Y)$ of continuously differentiable functions, the set $\mathbf C^{1, 1}(X; Y)$ of differentiable functions with a Lipschitz continuous differential, and the set $\mathbf C^\infty_c(X; Y)$ of infinitely differentiable functions with compact support on $X$. If $f: X \to Y$ and $Z \subset X$, we write $f \in \mathbf C(Z; Y)$ to denote that the restriction of $f$ to $Z$ is continuous (even though $f$ itself may fail to be continuous in the boundary of $Z$), with similar notations for the other functional spaces defined above.

For compact $A \subset \mathbbm R^d$, the space $\mathbf{C}(A)$ is assumed to be endowed with the topology of uniform convergence on compact sets, with respect to which $\mathbf{C}(A)$ is a Polish space (see, e.g., \cite[Chapter~X]{Bourbaki2007Topologie}). For $t\in \mathbbm R_+$, we denote by $e_t:\mathbf{C}(A) \to A$ the evaluation map at time $t$, defined by $e_t(\gamma) = \gamma(t)$ for every $\gamma \in \mathbf C(A)$.

Recall that, for two metric spaces $X$ and $Y$ endowed with their Borel $\sigma$-algebras and a Borel-measurable map $f:X \to Y$, the pushforward of a measure $\mu$ on $X$ through $f$ is the measure $f_{\#} \mu$ on $Y$ defined by $f_{\#} \mu (B) = \mu(f^{-1}(B))$ for every Borel subset $B$ of $Y$. Given a Polish space $X$, the set of all Borel probability measures on $X$ is denoted by $\mathcal{P}(X)$ and is endowed with the topology of weak convergence of measures (which is also sometimes referred to as narrow convergence). The support of a measure $\mu \in \mathcal P(X)$ is denoted by $\supp(\mu)$, and is defined as the set of all points $x \in X$ such that $\mu(N_x) > 0$ for every open neighborhood $N_x$ of $x$. When $X$ is endowed with a complete metric $\mathbf d$ with respect to which $X$ is bounded, we endow $\mathcal P(X)$ with the Wasserstein distance $\mathbf W_1$, defined for $\mu, \nu \in \mathcal P(X)$ by
\[
\mathbf{W}_1(\mu,\nu) = \inf \left \{ \int_{X \times X} \mathbf d(x, y) \diff \lambda(x,y) \suchthat \lambda \in \Pi(\mu,\nu) \right\},
\]
where $\Pi(\mu,\nu)=\{\lambda \in \mathcal{P}(X \times X) \suchthat \pi_{1\#} \lambda=\mu,\; \pi_{2\#} \lambda=\nu \}$ and $\pi_1,\, \pi_2 : X \times X \to X$ denote the canonical projections on to the first and second factors of the product $X \times X$, respectively. 
Recall (see, e.g., \cite[Chapter~7]{Ambrosio1} and \cite[Chapter~5]{FilippoSantam}) that $\mathbf W_1$ is compatible with the topology of weak convergence in $\mathcal P(X)$ and that it admits the dual formulation
\[
\mathbf{W}_1(\mu,\nu)=\sup\left \{\int_{X} \Phi(x) \diff \,(\mu-\nu) \suchthat \Phi \in  \Lip_1(X; \mathbbm R) \right \}.
\]

We shall also need in this paper the notion of signed distance to the boundary of a set, whose definition we now provide, together with the classical definition of distance from a point to a set.

\begin{definition}
Let $A\subset \mathbbm R^d$. If $A \neq \varnothing$, we denote by $d_A$ the Euclidean distance to $A$, defined by $d_A(x) = \inf_{y\in A} \abs*{x-y}$. If $A\neq \varnothing$ and $A\neq \mathbbm R^d$, the signed distance to the boundary of $A$, $d^{\pm}_{\partial A}:\mathbbm R^d\to \mathbbm R$, is defined by
\[
d^{\pm}_{\partial A}(x) = d_A(x)-d_{\mathbbm R^d\setminus A}(x).
\]
\end{definition}

Finally, we will need, in Section~\ref{More on OPT}, the notion of normal cones of nonsmooth sets. We recall here the definitions provided in \cite[Section~4.2]{Vinter2010Optimal}.

\begin{definition}
\label{defi:normal-cones}
Let $k$ be a positive integer, $C \subset \mathbbm R^k$ be a nonempty closed set, and $x \in C$.
\begin{enumerate}
\item\label{defi:proximal-normal-cone} The \emph{proximal normal cone} to $C$ at $x$ is the set $N_C^{\mathrm{P}}(x)$ defined by
\[
N_C^{\mathrm P}(x) = \{p \in \mathbbm R^k \suchthat \exists M > 0 \text{ such that } p \cdot (y - x) \leq M \abs{y - x}^2 \text{ for every } y \in C\}.
\]
\item\label{defi:limiting-normal-cone} The \emph{limiting normal cone} to $C$ at $x$ is the set $N_C(x)$ of vectors $p \in \mathbbm R^k$ for which there exist a sequence $(x_n)_{n \in \mathbbm N}$ in $C$ and a sequence $(p_n)_{n \in \mathbbm N}$ in $\mathbbm R^k$ such that $x_n \to x$ and $p_n \to p$ as $n \to +\infty$ and $p_n \in N_C^{\mathrm P}(x_n)$ for every $n \in \mathbbm N$.
\end{enumerate}
\end{definition}

\subsection{The minimal-time optimal control problem}
\label{sec:OCP}

Before introducing the minimal-time mean field game of interest to this paper, let us first consider an auxiliary minimal-time optimal control problem, which is a classical kind of problem in control theory. Let $\Omega \subset \mathbbm R^d$ be a nonempty open bounded set and $\Gamma \subset \bar\Omega$ be a nonempty closed set. We consider the control system
\begin{equation}\label{General control sys}
    \left \{
    \begin{aligned}
    \Dot{\gamma}(t)&=k(t,\gamma(t)) u(t),\qquad u(t)\in \bar{B},  \\
    \gamma(0)&=x,    
    \end{aligned} \right.
\end{equation}
where $x \in \bar\Omega$, $k: \mathbbm R_+ \times \bar\Omega \to \mathbbm R_+$, and, for $t \geq 0$, $\gamma(t) \in \bar\Omega$ is the state and $u(t) \in \bar B$ is the control. The function $k$ describes the dynamics of the system and, due to the constraint that $u(t) \in \bar B$, $k(t, x)$ can be interpreted as the maximal speed at which an agent at position $x$ at time $t$ can move. The fact of requiring that $\gamma(t) \in \bar\Omega$ for every time $t \geq 0$ is a constraint we impose in the state of \eqref{General control sys}.

We are interested in this paper in the optimal control problem consisting in minimizing the time a trajectory of \eqref{General control sys} takes to reach the set $\Gamma$, called the \emph{target set}. This optimal control problem, to which we refer in the sequel as $\OCP(k)$ making explicit its dependence on the dynamics $k$, is made more precise in the following definition.

\begin{definition}
\label{def:OCP}
Let $\Omega \subset \mathbbm R^d$ be a nonempty open bounded set, $\Gamma \subset \bar\Omega$ be a nonempty closed set, and $k: \mathbbm R_+ \times \bar{\Omega} \to \mathbbm R_+$.
\begin{enumerate}[label={(\alph*)}]
    \item A curve $\gamma \in \Lip(\bar{\Omega})$ is said to be \emph{admissible} for $\OCP(k)$ if there exists a measurable function $u: \mathbbm R_+ \to \bar B$, called the \emph{control} associated with $\gamma$, such that the first equation of \eqref{General control sys} is satisfied for almost every $t \in \mathbbm R_+$. The set of all admissible curves for $\OCP(k)$ is denoted by $\Adm(k)$.

    \item\label{def:OCP-tau} Let $t_0 \in \mathbbm R_+$. The \emph{first exit time} after $t_0$ of a curve $\gamma \in \Lip(\bar{\Omega})$ is the number $\tau(t_0, \gamma)$ defined by
    \[
    \tau(t_0, \gamma) = \inf \{t \ge 0 \suchthat \gamma(t + t_0) \in \Gamma\},
    \]
		with the convention that $\tau(t_0, \gamma) = +\infty$ if $\gamma(t + t_0) \not \in \Gamma$ for every $t \geq 0$.
				
    \item\label{item:def:optimal-trajectory} Let $t_0 \in \mathbbm R_+$ and $x_0 \in \bar{\Omega}$. A curve $\gamma \in \Lip(\bar{\Omega})$ is said to be an \emph{optimal trajectory} for $\OCP(k)$ from $(t_0, x_0)$ if $\gamma \in \Adm(k)$, $\gamma(t) = x_0$ for every $t \in [0, t_0]$,
    \begin{equation}\label{min ex time}
        \tau(t_0, \gamma) = \inf_{\substack{\beta \in \Adm(k) \\ \beta(t_0) = x_0}} \tau(t_0, \beta),
    \end{equation}
		and $\gamma(t) = \gamma(t_0 + \tau(t_0, \gamma))$ for every $t \in [t_0 + \tau(t_0, \gamma), +\infty)$. In this case, the associated control $u$ of $\gamma$ is called an \emph{optimal control} associated with $\gamma$. The set of all optimal trajectories for $\OCP(k)$ from $(t_0, x_0)$ is denoted by $\Opt(k, t_0, x_0)$.
		
		\item\label{def:value-function} The \emph{value function} of the optimal control problem $\OCP(k)$ is the function $\varphi: \mathbbm R_+ \times \bar{\Omega} \to \mathbbm R_+$ defined for $(t_0, x_0) \in \mathbbm R_+ \times \bar{\Omega}$ by
\begin{equation}\label{value function}
\varphi(t_0, x_0) = \inf_{\substack{\gamma \in \Adm(k) \\ \gamma(t_0) = x_0}} \tau(t_0,\gamma).
\end{equation}
\end{enumerate}
\end{definition}

We highlight the fact that, for a curve $\gamma$ to be admissible, it must not only satisfy the first equation of \eqref{General control sys} for some control $u$ and almost every time $t \geq 0$, but it must also remain inside $\bar\Omega$ for all times.

\begin{remark}
Since we are considering in Definition~\ref{def:OCP}\ref{item:def:optimal-trajectory} optimal trajectories $\gamma$ starting at time $t_0$ from $x_0$ and minimizing the time to reach $\Gamma$, these trajectories could have been defined only in the interval $[t_0, t_0 + \tau(t_0, \gamma)]$. However, in order to simplify the notations and deal in this paper only with trajectories and controls defined on $\mathbbm R_+$, we extend an optimal trajectory to $\mathbbm R_+$ by requiring it to be constant on $[0, t_0]$ and on $[t_0 + \tau(t_0, \gamma), +\infty)$, as done in Definition~\ref{def:OCP}\ref{item:def:optimal-trajectory}.
\end{remark}

\subsection{The minimal-time mean field game and its equilibria}

As in Section~\ref{sec:OCP}, we fix a nonempty open bounded set $\Omega \subset \mathbbm R^d$ and a nonempty closed set $\Gamma \subset \bar\Omega$. We consider a minimal-time mean field game in which a population of agents evolves on $\bar\Omega$ and the goal of each agent is to reach the target set $\Gamma$ in minimal time. The population is described by a time-dependent probability measure $m_t \in \mathcal P(\bar\Omega)$ for $t \geq 0$, and $m_0$ is assumed to be known. The trajectory $\gamma$ of an agent starting its movement at a position $x \in \bar\Omega$ is assumed to satisfy the control system
\begin{equation}
\label{Control sys MFG}
    \left \{
    \begin{aligned}
    \Dot{\gamma}(t)&=K(m_t,\gamma(t)) u(t),\qquad u(t)\in \bar{B},  \\
    \gamma(0)&=x,    
    \end{aligned} \right.
\end{equation}
where $K: \mathcal P(\bar\Omega) \times \bar\Omega \to \mathbbm R_+$ and $u: \mathbbm R_+ \to \bar B$ is the control of the agent. Note that \eqref{Control sys MFG} corresponds to \eqref{General control sys} with $k$ defined by $k(t, x) = K(m_t, x)$ for $t \geq 0$ and $x \in \bar{\Omega}$. Each agent is assumed to choose their control in order to solve the optimal control problem $\OCP(k)$ with $k$ defined from $K$ as before. This mean field game is denoted in the sequel by $\MFG(K)$.

The function $K$ models interactions between agents and states that the maximal speed at which an agent at position $x$ at time $t$ can move depends on the position $x$ itself and on the distribution of all agents at time $t$, $m_t$. This function can be used to model congestion phenomena in crowd motion by choosing $K(m_t, x)$ to be small when $m_t$ is ``large'' around $x$, which means that it is harder for agents to move on more crowded regions. For instance, $K$ can be chosen as
\begin{equation}
\label{eq:K-example}
K(\mu, x) = g\left(\int_{\bar{\Omega}} \chi(x-y)\eta(y)\diff \mu(y)\right),
\end{equation}
where $\chi: \mathbbm R^d \to \mathbbm R_+$ is a convolution kernel, $\eta: \bar\Omega \to \mathbbm R_+$ is a weight function in the space $\bar\Omega$, and $g: \mathbbm R_+ \to \mathbbm R_+$ is a decreasing function. The function $\chi$ may represent, for instance, the region around an agent at which they look in order to evaluate local congestion, while $\eta$ may be a function that is larger in regions difficult to move, such as regions with obstacles or other kinds of difficult terrain, and which can also be used to discount people who already reached the target set, as it was done in \cite{Mazanti2019Minimal, Dweik2020Sharp}. Note, however, that we do \emph{not} assume this specific form for $K$ in this paper.

Note that, since $K$ depends on $m_t$ for all $t \geq 0$, the optimal trajectories taken by the agents depend on $m_t$. On the other hand, $m_t$ itself describes the evolution of the agents, and hence is determined by their choices of trajectories. We are interested in this paper in \emph{equilibrium} situations, in which, roughly speaking, starting from time evolution of the distribution of agents $m:\mathbbm R_+ \to \mathcal{P}(\bar\Omega)$, the optimal trajectories chosen by agents induce an evolution of the initial distribution of agents $m_0$ that is precisely given by $t \mapsto m_t$.

We provide a more precise notion of equilibrium in what is known as the Lagrangian framework, in which, instead of describing the evolution of agents as a time-dependent measure $m: \mathbbm R_+ \to \mathcal P(\bar\Omega)$, we rely instead on a measure $Q$ on the set of all possible trajectories $\mathcal P(\mathbf C(\bar\Omega))$. Note that, given a measure $Q \in \mathcal P(\mathbf C(\bar\Omega))$, one can obtain the associated time-dependent measure $m$ by setting $m_t = e_{t\#} Q$ for $t \geq 0$. The Lagrangian approach is a classical approach in optimal transport problems (see, e.g., \cite{Ambrosio1, FilippoSantam}) which has been used to define equilibria of first-order mean field games in some recent works, such as \cite{BenamouCarlierSantam, CannarsaPiermarco, CardaliaguetPierre, CardaliaguetPierre2, Mazanti2019Minimal, Dweik2020Sharp, SadeghiMulti, Cannarsa2020Mild, Fischer2020Asymptotic}.

\begin{definition}
\label{defi:equilibrium}
Let $\Omega \subset \mathbbm R^d$ be a nonempty open bounded set, $\Gamma \subset \bar\Omega$ be a nonempty closed set, $K: \mathcal P(\bar\Omega) \times \bar{\Omega} \to \mathbbm R_+$, and $m_0 \in \mathcal{P}(\bar{\Omega})$. A measure $Q\in \mathcal{P}(\mathbf{C}(\bar{\Omega}))$ is called a \emph{Lagrangian equilibrium} of $\MFG(K)$ with initial condition $m_0$ if $e_{0\#}Q = m_0$ and $Q$-almost every $\gamma \in \mathbf{C}(\bar{\Omega})$ satisfies $\gamma \in \Opt(k_Q, 0, \gamma(0))$, where $k_Q:\mathbbm R_+ \times \bar{\Omega} \to \mathbbm R_+$ is defined for $t \geq 0$ and $x \in \bar\Omega$ by $k_Q(t,x)=K(e_{t\#}Q,x)$.
\end{definition}

In the sequel of the paper, we refer to Lagrangian equilibria simply as equilibria.

\begin{remark}
Another classical way to describe the evolution of agents in a mean field game, which dates back to \cite{HuangMalhame} and has been used and developed in several other references such as \cite{Gomes2016Extended, Lacker2015Mean, Carmona2015Probabilistic}, is to fix a probability space $(\mathsf Z, \mathcal Z, \mathbbm P)$ and describe the motion of agents through a time-dependent random variable in $\mathsf Z$, i.e., through a function $X$ defined on $\mathbbm R_+$ and such that, for every $t \geq 0$, $X(t): \mathsf Z \to \bar\Omega$ is measurable. One can then retrieve the time-dependent measure $m_t \in \mathcal P(\bar\Omega)$ as the law of $X(t)$, i.e., $m_t = X(t)_{\#} \mathbbm P$, but $X$ contains more information than $m$, since $X$ also carries information on the correlation of the distributions of agents at different times, for instance. This formulation using random variables has the additional advantage of being also adapted to study of other kinds of mean field games, such as second-order mean field games or mean field games of controls.

In addition, if\footnote{Here, $X(t, z)$ is a simplified notation for $X(t)(z)$.} $\mathbbm R_+ \ni t \mapsto X(t, z) \in \bar\Omega$ is continuous for almost every $z \in \mathsf Z$ and the function $\Xi: \mathsf Z \to \mathbf C(\bar\Omega)$ defined by setting $\Xi(z) = X(\cdot, z)$ for a.e.\ $z \in \mathsf Z$ is measurable, then the probability measure $Q \in \mathcal P(\mathbf C(\bar\Omega))$ describing the evolution of agents from a Lagrangian perspective can be retrieved as $Q = \Xi_{\#} \mathbbm P$.

Conversely, given a measure $Q \in \mathcal P(\mathbf C(\bar\Omega))$ describing the evolution of agents, one can consider the probability space $(\mathbf C(\bar\Omega), \mathcal B, Q)$, where $\mathcal B$ is the Borel $\sigma$-algebra of $\mathbf C(\bar\Omega)$, and in this case the evolution of agents can be described by the time-dependent random variable $X$ defined by $X(t) = e_t$ for $t \geq 0$.
\end{remark}

\subsection{Hypotheses}

Along the paper, we will need several assumptions on $\Omega$, $\Gamma$, $K$, and $k$, which we collect in this subsection. We start with the following assumptions on the sets $\Omega$ and $\Gamma$.
\begin{hypotheses}
\item\label{HypoOmega} The set $\Omega \subset \mathbbm R^d$ is nonempty, open, bounded, and connected.
\item\label{HypoGamma} The set $\Gamma \subset \bar\Omega$ is nonempty and closed.
\item\label{HypoOmegaC11} The boundary $\partial\Omega$ is a compact $\mathbf{C}^{1,1}$ submanifold of $\mathbbm R^d$ of dimension $d-1$.
\end{hypotheses}
\newcommand{\HypoOmega}{\ref{HypoOmega}--\ref{HypoOmegaC11}}

Whenever we assume that \ref{HypoOmegaC11} holds, we use $\mathbf n(x)$ to denote the outward unit normal vector to $\bar\Omega$ at the point $x \in \partial\Omega$.

When studying the optimal control problem $\OCP(k)$, we shall need the following assumptions on the function $k$.
\begin{hypotheses}[resume]
\item\label{HypoOCP-k-Bound} The function $k: \mathbbm R_+ \times \bar{\Omega} \to \mathbbm R_+$ is continuous and there exist positive constants $K_{\min}$ and $K_{\max}$ such that $k(t, x) \in [K_{\min}, K_{\max}]$ for every $(t, x) \in \mathbbm R_+ \times\bar{\Omega}$.
\item\label{HypoOCP-k-Lip} The function $k$ is Lipschitz continuous with respect to its second variable, uniformly with respect to the first variable, i.e., there exists $L > 0$ such that, for every $t \in \mathbbm R_+$ and $x_1, x_2 \in \bar{\Omega}$, we have
\[
\abs{k(t, x_1) - k(t, x_2)} \leq L \abs{x_1 - x_2}.
\]

\end{hypotheses}
The counterpart of Hypotheses 
\ref{HypoOCP-k-Bound} and \ref{HypoOCP-k-Lip}
concerning the function $K$ from the mean field game $\MFG(K)$ are the following.
\begin{hypotheses}[resume]
\item \label{HypoMFG-K-Bound} The function $K: \mathcal{P}(\bar{\Omega}) \times \bar{\Omega} \to \mathbbm R_+$ is continuous and there exist positive constants $K_{\min}$ and $K_{\max}$ such that $K(\mu, x) \in [K_{\min}, K_{\max}]$ for every $(\mu, x) \in \mathcal{P}(\bar{\Omega}) \times\bar{\Omega}$.
\item\label{HypoMFG-K-Lip} The functions $K$ is Lipschitz continuous with respect to its second variable, uniformly with respect to the first variable, i.e., there exists $L > 0$ such that, for every $\mu \in \mathcal P(\bar{\Omega})$ and $x_1, x_2 \in \bar{\Omega}$, we have
\[
\abs{K(\mu, x_1) - K(\mu, x_2)} \leq L \abs{x_1 - x_2}.
\] 
\end{hypotheses}
Note that 
\ref{HypoMFG-K-Bound} and \ref{HypoMFG-K-Lip}
are satisfied in the particular case where $K$ is chosen as in \eqref{eq:K-example} if $g$ is Lipschitz continuous, upper bounded, and lower bounded by a positive constant, $\chi$ is Lipschitz continuous, and $\eta$ is continuous.

\section{Preliminary results}
\label{sec:prelim}

This section presents results on $\OCP(k)$ and $\MFG(K)$ that will be useful in the sequel, most of which are either present in other references or easy to prove using classical techniques. It turns out that the results we will present in this section require fewer regularity assumptions on $\Omega$ than \ref{HypoOmega} and \ref{HypoOmegaC11}; namely, we replace here \ref{HypoOmegaC11} by the following assumption.
\begin{enumerate}[labelwidth=\widthof{\normalsize (H3$^\prime$)\ }, leftmargin=!]
\item[\customlabel{HypoOmegaGeodesic}{\normalfont{(H3$^\prime$)}}] There exists $D > 0$ such that, for every $x, y \in \bar\Omega$, there exists a curve $\gamma$ included in $\bar\Omega$ connecting $x$ to $y$ and of length at most $D \abs{x - y}$.
\end{enumerate}
Hypothesis \ref{HypoOmegaGeodesic} means that the geodesic distance in $\bar\Omega$ is equivalent to the usual Euclidean distance, and it holds in particular when \ref{HypoOmega} and \ref{HypoOmegaC11} are satisfied.

The first result we present concerns three elementary properties of $\OCP(k)$: existence of optimal trajectories, boundedness of the value function, and the dynamic programming principle. The proof is omitted, since all properties are either easy to prove or classical. Indeed, existence of optimal trajectories can be proved easily using compactness of a minimizing sequence (see \cite[Theorem~8.1.4]{CannarsaPiermarcoSinestrari} for a proof in the autonomous case, i.e., when $k: \mathbbm R_+ \times \bar\Omega \to \mathbbm R_+$ does not depend on its first variable). Boundedness of the value function follows from the fact that, using \ref{HypoOmegaGeodesic}, one can easily construct, for every point $x \in \bar\Omega$, an admissible trajectory connecting it to a point in $\Gamma$ with constant speed $K_{\min}$ and that arrives in $\Gamma$ in time at most $\frac{D d_\Gamma(x)}{K_{\min}}$, and the continuous function $d_\Gamma$ is bounded in the compact set $\bar\Omega$. Finally, the proof of the dynamic programming principle is classical: the autonomous case can be found, for instance, in \cite[Proposition~2.1]{BardiDolcetta} and \cite[(8.4)]{CannarsaPiermarcoSinestrari}, and the corresponding proofs can be easily adapted to our nonautonomous setting.

\begin{proposition}
\label{PropOCP-1}
Consider $\OCP(k)$ under hypotheses \ref{HypoOmega}, \ref{HypoGamma}, \ref{HypoOmegaGeodesic}, \ref{HypoOCP-k-Bound}, and \ref{HypoOCP-k-Lip}.
\begin{enumerate}
\item\label{PropOCP-ExistsOptimal} For every $(t_0, x_0) \in \mathbbm R_+ \times \bar\Omega$, there exists an optimal trajectory $\gamma$ for $\OCP(k)$ from $(t_0, x_0)$.
\item\label{PropOCP-VarphiBounded} There exists $T > 0$ such that, for every $(t_0, x_0) \in \mathbbm R_+ \times \bar{\Omega}$, the value function $\varphi$ satisfies $\varphi(t_0, x_0) \leq T$.
\item\label{PropOCP-DPP} For every $(t_0, x_0) \in \mathbbm R_+ \times \bar\Omega$ and $\gamma \in \Adm(k)$ such that $\gamma(t_0) = x_0$, we have, for every $h \geq 0$,
\begin{equation}
\label{eq:DPP}
\varphi(t_0 + h, \gamma(t_0 + h)) + h \geq \varphi(t_0, x_0),
\end{equation}
with equality if $\gamma \in \Opt(k, t_0, x_0)$ and $h \in [0, \tau(t_0, \gamma)]$. Conversely, if $\gamma \in \Adm(k)$ satisfies $\gamma(t_0) = x_0$, $\gamma$ is constant on $[0, t_0]$ and on $[t_0 + \tau(t_0, \gamma), +\infty)$, and equality holds in \eqref{eq:DPP} for every $h \in [0, \tau(t_0, \gamma)]$, then $\gamma \in \Opt(k, t_0, x_0)$.
\end{enumerate}
\end{proposition}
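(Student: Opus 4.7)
The plan is to dispatch each of the three assertions by standard tools: for existence, a compactness argument on a minimizing sequence; for the upper bound on $\varphi$, an explicit admissible trajectory built from the geodesic hypothesis \ref{HypoOmegaGeodesic}; and for the dynamic programming principle, concatenation of an arbitrary admissible curve with an optimal continuation, together with restriction of an optimal curve.

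For \ref{PropOCP-ExistsOptimal}, I would fix $(t_0, x_0)$ and take a minimizing sequence $\gamma_n \in \Adm(k)$ with $\gamma_n(t_0) = x_0$ and $\tau(t_0, \gamma_n) \to \varphi(t_0, x_0)$, chosen so that $\gamma_n$ is constant on $[0, t_0]$ and on $[t_0 + \tau(t_0, \gamma_n), +\infty)$. By \ref{HypoOCP-k-Bound} and $\abs{u_n} \leq 1$, each $\gamma_n$ is $K_{\max}$-Lipschitz with values in the compact set $\bar\Omega$, so Arzelà--Ascoli produces a subsequence converging locally uniformly to some $\gamma \in \Lip_{K_{\max}}(\bar\Omega)$. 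Simultaneously, the controls $u_n$, bounded in $L^\infty(\mathbbm R_+; \mathbbm R^d)$, admit a weak-$\ast$ cluster point $u$ with $\abs{u} \leq 1$ almost everywhere. Passing to the limit in the integral form of \eqref{General control sys} — continuity of $k$ combined with uniform convergence of $\gamma_n$ handles the coefficient, weak-$\ast$ convergence of $u_n$ handles the integral — shows that $\gamma$ is admissible with control $u$. Finally, $\gamma_n(t_0 + \tau(t_0, \gamma_n)) \in \Gamma$ converges along a subsequence to $\gamma(t_0 + \varphi(t_0, x_0))$, which lies in $\Gamma$ by closedness, so $\tau(t_0, \gamma) \leq \varphi(t_0, x_0)$; redefining $\gamma$ to be constant after its first hitting of $\Gamma$ (without affecting admissibility) yields an optimal trajectory.

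For \ref{PropOCP-VarphiBounded}, fix any $y \in \Gamma$. Given $(t_0, x_0)$, hypothesis \ref{HypoOmegaGeodesic} furnishes an arc-length parametrized curve $\beta: [0, L] \to \bar\Omega$ joining $x_0$ to $y$ with $L \leq D \abs{x_0 - y} \leq D \diam(\bar\Omega)$. Reparametrizing at the constant speed $K_{\min}$ and extending by the constants $x_0$ and $y$ before and after the travel interval produces a curve $\gamma$ with $\abs{\dot\gamma} \in \{0, K_{\min}\} \leq k(\cdot, \gamma)$, which is thus admissible with control $u = \dot\gamma/k(\cdot, \gamma)$ (arbitrarily completed where $\dot\gamma = 0$). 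Hence $\varphi(t_0, x_0) \leq L/K_{\min}$, and one may take $T = D \diam(\bar\Omega)/K_{\min}$. For \ref{PropOCP-DPP}, the inequality \eqref{eq:DPP} is a concatenation argument: given $\gamma \in \Adm(k)$ with $\gamma(t_0) = x_0$, pick $\beta \in \Opt(k, t_0+h, \gamma(t_0+h))$ via part \ref{PropOCP-ExistsOptimal} and splice $\gamma|_{[0, t_0+h]}$ with $\beta|_{[t_0+h, +\infty)}$ to obtain an admissible curve from $(t_0, x_0)$ with first exit time after $t_0$ at most $h + \varphi(t_0+h, \gamma(t_0+h))$. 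When $\gamma \in \Opt(k, t_0, x_0)$ and $h \in [0, \tau(t_0, \gamma)]$, the restriction of $\gamma$ gives $\varphi(t_0+h, \gamma(t_0+h)) \leq \tau(t_0, \gamma) - h = \varphi(t_0, x_0) - h$, matching the previous bound. For the converse, evaluating the equality hypothesis at $h = \tau(t_0, \gamma)$ together with $\gamma(t_0 + \tau(t_0, \gamma)) \in \Gamma$ (which forces $\varphi$ to vanish there) yields $\tau(t_0, \gamma) = \varphi(t_0, x_0)$, so $\gamma$ is optimal.

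The trickiest ingredient is the lower semicontinuity of the exit time $\tau$ needed in part \ref{PropOCP-ExistsOptimal}: on all of $\Adm(k)$, $\tau(t_0, \cdot)$ fails lower semicontinuity in general, since a curve close to a trajectory that just barely touches $\Gamma$ can miss $\Gamma$ entirely and have $\tau = +\infty$. The uniform upper bound $\tau(t_0, \gamma_n) \leq T$ from part \ref{PropOCP-VarphiBounded} — which must therefore be proved first and used here — confines the hitting times to a bounded interval, and closedness of $\Gamma$ combined with uniform convergence of $\gamma_n$ then suffices. The nonautonomous nature of $k$ introduces no further obstacle thanks to its joint continuity in \ref{HypoOCP-k-Bound}.
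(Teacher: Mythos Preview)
Your proposal is correct and follows exactly the approach the paper sketches (the paper in fact omits the proof, referring to compactness of minimizing sequences for \ref{PropOCP-ExistsOptimal}, the explicit geodesic construction at speed $K_{\min}$ for \ref{PropOCP-VarphiBounded}, and classical concatenation/restriction for \ref{PropOCP-DPP}).

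One small slip worth flagging: in your closing paragraph, the phenomenon you describe --- a limit curve $\gamma$ that barely touches $\Gamma$ while nearby curves $\gamma_n$ miss it entirely and have $\tau = +\infty$ --- is a failure of \emph{upper} semicontinuity of $\tau$, not lower.  Lower semicontinuity (which is what you actually need for the direct method) does hold on all of $\Adm(k)$ simply because $\Gamma$ is closed: if $\tau(t_0,\gamma_n)\to T^* < +\infty$ and $\gamma_n \to \gamma$ locally uniformly, then $\gamma_n(t_0+\tau(t_0,\gamma_n)) \in \Gamma$ forces $\gamma(t_0+T^*)\in\Gamma$.  The role of part~\ref{PropOCP-VarphiBounded} is just to guarantee $\varphi(t_0,x_0)<+\infty$ so that the minimizing sequence has bounded hitting times; this is how you use it in your main argument, which is fine.
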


Our next result deals with Lipschitz continuity of the value function $\varphi$. This kind of result is classical for optimal control problems with free final time (see, e.g., \cite[Proposition~8.2.5]{CannarsaPiermarcoSinestrari} for a proof in the autonomous case), and a complete proof for the nonautonomous optimal control problem $\OCP(k)$ was given in \cite[Propositions~4.2 and 4.3]{Mazanti2019Minimal}. However, that reference uses the stronger assumption that $k \in \Lip(\mathbbm R_+ \times \bar\Omega; \mathbbm R_+)$. When the optimal control problem does not have state constraints, this assumption can be relaxed to \ref{HypoOCP-k-Lip} by first showing Lipschitz continuity of $\varphi$ with respect to $x$, which can be done by adapting the classical proof of \cite[Proposition~8.2.5]{CannarsaPiermarcoSinestrari}, and then using the dynamic programming principle to deduce Lipschitz continuity also with respect to $t$. This strategy was described in \cite[Proposition~3.8]{Dweik2020Sharp} and carried out in details in \cite[Lemma~4.7 and Proposition~4.8]{SadeghiMulti}, however those proofs rely on the absence of state constraints and cannot be easily generalized to optimal control problems with state constraints. For that reason, we present here a new proof, which is inspired by that of \cite[Propositions~4.2 and 4.3]{Mazanti2019Minimal} but uses a technique introduced in the proof of \cite[Proposition~3.9]{Dweik2020Sharp} in order to replace the assumption $k \in \Lip(\mathbbm R_+ \times \bar\Omega; \mathbbm R_+)$ by the weaker assumption \ref{HypoOCP-k-Lip}. As a first step, we prove Lipschitz continuity of $\varphi$ in space for fixed time.

\begin{lemma}\label{lemm-varphi-Lipschitz}
Consider the optimal control problem $\OCP(k)$ and its value function $\varphi$ and assume that \ref{HypoOmega}, \ref{HypoGamma}, \ref{HypoOmegaGeodesic}, \ref{HypoOCP-k-Bound}, and \ref{HypoOCP-k-Lip} are satisfied. Then there exists $C > 0$ such that, for every $t_0 \in \mathbbm R_+$ and $x_0,\, x_1 \in \bar{\Omega}$, we have
\begin{equation}
\label{eq:varphi-Lipschitz-in-x}
\abs{\varphi(t_0, x_0) - \varphi(t_0, x_1)} \leq C \abs{x_0 - x_1}.
\end{equation}
\end{lemma}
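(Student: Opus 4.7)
The plan is to establish the one-sided bound $\varphi(t_0, x_1) \le \varphi(t_0, x_0) + C|x_0 - x_1|$ for all $t_0 \in \mathbbm R_+$ and $x_0, x_1 \in \bar\Omega$; swapping the roles of $x_0$ and $x_1$ then yields \eqref{eq:varphi-Lipschitz-in-x}. By Proposition~\ref{PropOCP-1}\ref{PropOCP-VarphiBounded}, $\varphi \le T$ uniformly, so the estimate is trivial when $|x_0 - x_1|$ exceeds any fixed positive constant; it therefore suffices to treat the case $|x_0 - x_1| < \delta_0$ for some small $\delta_0 > 0$. Fixing such $x_0, x_1$, I pick via Proposition~\ref{PropOCP-1}\ref{PropOCP-ExistsOptimal} an optimal trajectory $\gamma \in \Opt(k, t_0, x_0)$ with associated control $u$ and exit time $\tau = \varphi(t_0, x_0) \le T$, and I aim to build an admissible curve $\tilde\gamma$ with $\tilde\gamma(t_0) = x_1$ reaching $\Gamma$ at time at most $t_0 + \tau + C|x_0 - x_1|$.

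The first ingredient, borrowed from \cite[Proposition~3.9]{Dweik2020Sharp}, is to track $\gamma$ by integrating on $[t_0, t_0 + \tau]$ the ODE
\[
\dot{\tilde\gamma}(t) = k(t, \tilde\gamma(t))\, u(t), \qquad \tilde\gamma(t_0) = x_1,
\]
with the \emph{same} control $u$. This avoids any time shift of $\gamma$ and therefore needs only the space-Lipschitz hypothesis \ref{HypoOCP-k-Lip} on $k$, in contrast with the argument of \cite[Propositions~4.2 and 4.3]{Mazanti2019Minimal}, which required joint Lipschitz continuity of $k$ in $(t,x)$. Since $|u(t)| \le 1$, the speed constraint is automatic (after extending $k$ Lipschitzly in $x$ to a neighborhood of $\bar\Omega$ via the constant $L$), and \ref{HypoOCP-k-Lip} combined with Grönwall's inequality yields the key estimate
\[
|\tilde\gamma(t) - \gamma(t)| \le |x_0 - x_1|\, e^{L(t - t_0)} \le |x_0 - x_1|\, e^{LT}, \qquad t \in [t_0, t_0 + \tau],
\]
so that in particular $d_\Gamma(\tilde\gamma(t_0 + \tau)) \le |x_0 - x_1|\, e^{LT}$.

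The most delicate step, and the main obstacle, is the state constraint: $\tilde\gamma$ defined above may escape $\bar\Omega$ (for instance when $\gamma$ runs along $\partial\Omega$ and the initial perturbation $x_1 - x_0$ points outward), and only \ref{HypoOmegaGeodesic} is available, which rules out cleaner reflection or projection arguments based on $\mathbf{C}^{1,1}$-regularity of $\partial\Omega$. I plan to handle this by partitioning $[t_0, t_0+\tau]$ into sufficiently short subintervals, integrating the tracking ODE on each subinterval, and, whenever a piece of $\tilde\gamma$ exits $\bar\Omega$, substituting in its place a curve in $\bar\Omega$ joining its endpoints. Thanks to \ref{HypoOmegaGeodesic}, such a correction curve has length at most $D$ times the Euclidean displacement of the excursion, and traversing it at the admissible speed $K_{\min} \le k$ costs extra time proportional to that displacement; since excursions are themselves controlled by $|x_0-x_1|$ through the Grönwall estimate above, the patched admissible trajectory still lies entirely in $\bar\Omega$ and satisfies $|\tilde\gamma(t) - \gamma(t)| \le C' |x_0 - x_1|$ throughout $[t_0, t_0 + \tau]$. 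Carrying out this bookkeeping under the weak hypothesis \ref{HypoOmegaGeodesic} is the hard part of the proof.

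Finally, at time $t_0 + \tau$, $\tilde\gamma$ lies in $\bar\Omega$ within distance $C'|x_0 - x_1|$ of $\gamma(t_0+\tau) \in \Gamma$, so \ref{HypoOmegaGeodesic} provides a curve in $\bar\Omega$ from $\tilde\gamma(t_0+\tau)$ to a nearest point of $\Gamma$ of length at most $DC'|x_0 - x_1|$; appending this curve, traversed at the admissible speed $K_{\min}$, adds at most $DC'|x_0 - x_1|/K_{\min}$ to the exit time. Combining the two stages yields $\varphi(t_0, x_1) \le \tau + C|x_0 - x_1|$ with $C$ depending only on $D$, $L$, $K_{\min}$, $K_{\max}$, and $T$, which after symmetrization gives \eqref{eq:varphi-Lipschitz-in-x}.
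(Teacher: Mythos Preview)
Your overall strategy and the final ``append a short geodesic to reach $\Gamma$'' step are sound, but the handling of the state constraint --- which you correctly flag as the hard part --- has a genuine gap, and the paper's approach is quite different from the patching scheme you sketch.

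There are two concrete issues with the patching. First, your claim that ``excursions are themselves controlled by $|x_0-x_1|$'' is only true for the \emph{depth} of each excursion (how far $\tilde\gamma$ strays from $\bar\Omega$, bounded by $|x_0-x_1|e^{LT}$ via Gr\"onwall), not for the Euclidean displacement $|\tilde\gamma(a)-\tilde\gamma(b)|$ of an excursion nor for the number of excursions. If $\gamma$ runs along $\partial\Omega$ on a set of positive measure, the total displacement of excursions can be of order $\tau$, and the accumulated patch time is then not proportional to $|x_0-x_1|$. Second, once a patch introduces a time shift, resuming the trajectory is problematic under \ref{HypoOCP-k-Lip} alone: following $\tilde\gamma$ at a shifted time $s\neq t$ need not be admissible, since $|\dot{\tilde\gamma}(t)|\le k(t,\tilde\gamma(t))$ does not imply $|\dot{\tilde\gamma}(t)|\le k(s,\tilde\gamma(t))$; and restarting the tracking ODE with control $u(s)$ at the shifted time destroys the Gr\"onwall comparison with $\gamma$. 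Either fix forces a time reparametrization governed by the ratio $k(s,\cdot)/k(t,\cdot)$, at which point nothing has been simplified.

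The paper bypasses both issues by never letting the competitor leave the \emph{path} of $\gamma_0$. One first sends the competitor from $x_1$ to $x_0$ along a curve in $\bar\Omega$ furnished by \ref{HypoOmegaGeodesic}, at speed $K_{\min}$, costing time at most $D|x_0-x_1|/K_{\min}$; from $x_0$ one then follows the \emph{same spatial path} as $\gamma_0$, reparametrized via $\sigma_1(t)=\gamma_0(\phi(t))$, where $\phi$ solves $\dot\phi(t)=k(t,\gamma_0(\phi(t)))/k(\phi(t),\gamma_0(\phi(t)))$ with $\phi(t_1)=t_0$. By construction $\sigma_1$ is admissible and, since its image coincides with that of $\gamma_0\subset\bar\Omega$, the state constraint is automatic. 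The whole estimate then reduces to bounding $|\phi(t)-t|$, which follows from Gr\"onwall applied to the integral identity $\int_{t_0}^{\phi(t)}k(s,\gamma_0(s))\,\diff s=\int_{t_1}^{t}k(s,\gamma_0(\phi(s)))\,\diff s$ and uses only \ref{HypoOCP-k-Lip}. This ``follow the optimal path with a time change'' idea is the key input missing from your sketch.
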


\begin{proof}
Let $D > 0$ be as in \ref{HypoOmegaGeodesic}, $L > 0$ be as in \ref{HypoOCP-k-Lip}, and $T > 0$ be as in the statement of Proposition~\ref{PropOCP-1}\ref{PropOCP-VarphiBounded}. It suffices to show that there exists $C > 0$ such that, for every $t_0 \in \mathbbm R_+$ and $x_0, x_1 \in \bar\Omega$, we have
\begin{equation}
\label{eq:varphi-one-side-Lipschitz}
\varphi(t_0, x_1) - \varphi(t_0, x_0) \leq C \abs{x_0 - x_1},
\end{equation}
since, in this case, \eqref{eq:varphi-Lipschitz-in-x} can be deduced by exchanging the role of $x_0$ and $x_1$.

Let $t_0 \in \mathbbm R_+$ and $x_0, x_1 \in \bar{\Omega}$. Let $\gamma_0 \in \Opt(k, t_0, x_0)$ and denote by $u_0$ the corresponding optimal control, i.e., $\dot\gamma_0(t) = k(t, \gamma_0(t)) u_0(t)$ for a.e.\ $t \in \mathbbm R_+$. Let $t_0^\ast = t_0 + \varphi(t_0, x_0)$ be the time at which $\gamma_0$ arrives at the target set $\Gamma$.

Let us first describe informally the idea of the proof. We will construct an admissible trajectory $\gamma_1 \in \Lip(\bar\Omega)$ which remains constant at $x_1$ in the time interval $[0, t_0]$, then moves from $x_1$ to $x_0$ in a time interval $[t_0, t_1]$, and then follows the same path of $\gamma_0$, but with a change in the time scale since it starts from $x_0$ at time $t_1 > t_0$. This trajectory will then arrive at $x_0^\ast \in \Gamma$, and we will prove that its arrival time at $\Gamma$ satisfies
\begin{equation}
\label{eq:Lip-estimate-to-be-shown}
\tau(t_0, \gamma_1) \leq \varphi(t_0, x_0) + C \abs{x_0 - x_1},
\end{equation}
which yields the conclusion since $\varphi(t_0, x_1) \leq \tau(t_0, \gamma_1)$. The difficult part of the proof is to perform a suitable change in time scale guaranteeing both that $\gamma_1$ is admissible and that its arrival time at $\Gamma$ satisfies the above inequality.

Applying \ref{HypoOmegaGeodesic} and renormalizing the speed of the curve whose existence is asserted in that hypothesis, we obtain the existence of $t_1 \geq t_0$ and a Lipschitz continuous curve $\sigma: [t_0, t_1] \to \bar\Omega$ such that $\sigma(t_0) = x_1$, $\sigma(t_1) = x_0$, $\abs{\dot\sigma(t)} = K_{\min}$ for almost every $t \in [t_0, t_1]$, and $t_1 - t_0 \leq \frac{D \abs{x_1 - x_0}}{K_{\min}}$.

Let us now define $\phi: [t_1, +\infty) \to [t_0, +\infty)$ as a solution of the problem
\[
\left\{
\begin{aligned}
\dot\phi(t) & = \frac{k(t, \gamma_0(\phi(t)))}{k(\phi(t), \gamma_0(\phi(t)))} & \quad & \text{ for } t \geq t_1, \\
\phi(t_1) & = t_0.
\end{aligned}
\right.
\]
Note that, since $(t, s) \mapsto \frac{k(t, \gamma_0(s))}{k(s, \gamma_0(s))}$ is continuous (but not necessarily Lipschitz continuous in its second argument), a solution $\phi$ to the above problem exists and is of class $\mathbf C^1$ (but it may not be unique). Moreover, $\dot\phi(t) \in \left[\frac{K_{\min}}{K_{\max}}, \frac{K_{\max}}{K_{\min}}\right]$ for every $t \in [t_1, +\infty)$, which implies that $\phi: [t_1, +\infty) \to [t_0, +\infty)$ is increasing and surjective, and hence invertible, and both $\phi$ and $\phi^{-1}$ are Lispchitz continuous, with Lipschitz constant $\frac{K_{\max}}{K_{\min}}$. We define $\sigma_1: [t_1, +\infty) \to \bar\Omega$ by $\sigma_1(t) = \gamma_0(\phi(t))$, which, by construction, satisfies $\sigma_1(t_1) = x_0$, $\sigma_1 \in \Lip(\bar\Omega)$, and $\dot\sigma_1(t) = k(t, \sigma_1(t)) u_1(t)$ for a.e.\ $t \in [t_1, +\infty)$, where $u_1$ is defined for $t \in [t_1, +\infty)$ by $u_1(t) = u_0(\phi(t))$. In particular, since $\phi^{-1}$ is Lipschitz continuous, $u_1$ is measurable. We define $t_1^\ast = \phi^{-1}(t_0^\ast)$ and remark that $\sigma_1(t_1^\ast) = \gamma_0(t_0^\ast) = x_0^\ast \in \Gamma$.

Finally, we define $\gamma_1 \in \Lip(\bar\Omega)$ by $\gamma_1(t) = x_1$ for $t \in [0, t_0]$, $\gamma_1(t) = \sigma(t)$ for $t \in [t_0, t_1]$, and $\gamma_1(t) = \sigma_1(t)$ for $t \in [t_1, +\infty)$. By construction, we have $\gamma_1 \in \Adm(k)$ and $\tau(t_0, \gamma_1) \leq t_1^\ast - t_0$. We are thus only left to show \eqref{eq:Lip-estimate-to-be-shown}.

Note that, for every $t \in [t_1, +\infty)$, we have $k(\phi(t), \gamma_0(\phi(t))) \dot\phi(t) = k(t, \gamma_0(\phi(t)))$ and thus, by integrating this identity and performing a change of variables, we deduce that, for every $t \geq t_1$,
\begin{equation}
\label{eq:interesting-identitiy}
\int_{t_0}^{\phi(t)} k(s, \gamma_0(s)) \diff s = \int_{t_1}^t k(s, \gamma_0(\phi(s))) \diff s.
\end{equation}
Let $G: [t_0, +\infty) \to [0, +\infty)$ be defined for $\theta \geq t_0$ by
\[
G(\theta) = \int_{t_0}^\theta k(s, \gamma_0(s)) \diff s.
\]
Then $G$ is differentiable, with $\dot G(\theta) = k(\theta, \gamma_0(\theta)) \in [K_{\min}, K_{\max}]$ for every $\theta \geq t_0$. In particular, $G$ is $K_{\max}$-Lipschitz continuous, invertible, and its inverse is $\frac{1}{K_{\min}}$-Lipschitz continuous. Moreover, using \eqref{eq:interesting-identitiy}, we have, for every $t \geq t_1$, that
\[
G(\phi(t)) = \int_{t_1}^t k(s, \gamma_0(\phi(s))) \diff s, \qquad G(t) = \int_{t_0}^t k(s, \gamma_0(s)) \diff s.
\]
Hence, for every $t \geq t_1$, we have
\begin{align*}
\abs{\phi(t) - t} & = \abs*{G^{-1}\left(\int_{t_1}^t k(s, \gamma_0(\phi(s))) \diff s\right) - G^{-1}\left(\int_{t_0}^t k(s, \gamma_0(s)) \diff s\right)} \displaybreak[0] \\
& \leq \frac{1}{K_{\min}} \abs*{\int_{t_1}^t k(s, \gamma_0(\phi(s))) \diff s - \int_{t_0}^t k(s, \gamma_0(s)) \diff s} \displaybreak[0] \\
& \leq \frac{1}{K_{\min}} \int_{t_1}^t \abs*{k(s, \gamma_0(\phi(s))) - k(s, \gamma_0(s))}\diff s + \frac{1}{K_{\min}}\int_{t_0}^{t_1} k(s, \gamma_0(s)) \diff s \displaybreak[0] \\
& \leq \frac{L K_{\max}}{K_{\min}} \int_{t_1}^t \abs*{\phi(s) - s} \diff s + \frac{K_{\max}}{K_{\min}} (t_1 - t_0).
\end{align*}
Thus, by Grönwall's inequality, we have, for every $t \geq t_1$,
\[
\abs{\phi(t) - t} \leq (t_1 - t_0) \frac{K_{\max}}{K_{\min}} e^{\frac{L K_{\max}}{K_{\min}} (t - t_1)},
\]
which yields, for $t = t_1^\ast$, that
\[
\abs*{t_1^\ast - t_0^\ast} \leq (t_1 - t_0) \frac{K_{\max}}{K_{\min}} e^{\frac{L K_{\max}}{K_{\min}} (t_1^\ast - t_1)}.
\]
Note that $0 \leq t_1^\ast - t_1 = \phi^{-1}(t_0^\ast) - \phi^{-1}(t_0) \leq \frac{K_{\max}}{K_{\min}} (t_0^\ast - t_0) = \frac{K_{\max}}{K_{\min}} \varphi(t_0, x_0) \leq \frac{T K_{\max}}{K_{\min}}$. Moreover, $t_1^\ast = t_0 + \tau(t_0, \gamma_1)$ and $t_0^\ast = t_0 + \varphi(t_0, x_0)$, showing that $\abs{t_1^\ast - t_0^\ast} = \abs{\tau(t_0, \gamma_1) - \varphi(t_0, x_0)}$. Hence, we deduce that
\[
\tau(t_0, \gamma_1) - \varphi(t_0, x_0) \leq (t_1 - t_0) \frac{K_{\max}}{K_{\min}} \exp\left(\frac{T L K_{\max}^2}{K_{\min}^2}\right).
\]
Recalling that $0 \leq t_1 - t_0 \leq \frac{D \abs{x_1 - x_0}}{K_{\min}}$, we finally obtain \eqref{eq:Lip-estimate-to-be-shown} with $C = \frac{D K_{\max}}{K_{\min}^2} \exp\left(\frac{T L K_{\max}^2}{K_{\min}^2}\right)$.
\end{proof}

Now, exactly as in \cite[Proposition~4.8]{SadeghiMulti}, we can deduce Lipschitz continuity of $\varphi$ by using Lemma~\ref{lemm-varphi-Lipschitz} and the dynamic programming principle from Proposition~\ref{PropOCP-1}\ref{PropOCP-DPP}. We provide a proof of this fact here for sake of completeness.

\begin{proposition}\label{varphi is Lipschitz}
Consider the optimal control problem $\OCP(k)$ and its value function $\varphi$ and assume that \ref{HypoOmega}, \ref{HypoGamma}, \ref{HypoOmegaGeodesic}, \ref{HypoOCP-k-Bound}, and \ref{HypoOCP-k-Lip} are satisfied. Then there exists $M > 0$ such that, for every $(t_0, x_0), (t_1, x_1) \in \mathbbm R_+ \times \bar{\Omega}$, we have
\[
\abs{\varphi(t_0, x_0) - \varphi(t_1, x_1)} \leq M \left(\abs{t_0 - t_1} + \abs{x_0 - x_1}\right).
\] 
\end{proposition}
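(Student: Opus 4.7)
The plan is to combine the spatial Lipschitz estimate from Lemma~\ref{lemm-varphi-Lipschitz} with the dynamic programming principle from Proposition~\ref{PropOCP-1}\ref{PropOCP-DPP}. By the triangle inequality,
\[
\abs{\varphi(t_0, x_0) - \varphi(t_1, x_1)} \leq \abs{\varphi(t_0, x_0) - \varphi(t_1, x_0)} + \abs{\varphi(t_1, x_0) - \varphi(t_1, x_1)},
\]
and the second term is already bounded by $C\abs{x_0 - x_1}$ thanks to Lemma~\ref{lemm-varphi-Lipschitz}. It thus suffices to exhibit a constant $M_t > 0$ such that $\abs{\varphi(t_0, x_0) - \varphi(t_1, x_0)} \leq M_t \abs{t_0 - t_1}$ for every $t_0, t_1 \in \mathbbm R_+$ and $x_0 \in \bar\Omega$; one can then take $M = \max(M_t, C)$.

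Assume without loss of generality that $t_0 \leq t_1$ and set $h = t_1 - t_0$. For the inequality $\varphi(t_0, x_0) - \varphi(t_1, x_0) \leq h$, I would glue together two admissible pieces: the curve that stays at $x_0$ on $[t_0, t_1]$ (with the null control, which is admissible since $0 \in \bar B$) followed by an optimal trajectory for $\OCP(k)$ from $(t_1, x_0)$, whose existence is guaranteed by Proposition~\ref{PropOCP-1}\ref{PropOCP-ExistsOptimal}. The resulting curve is admissible from $(t_0, x_0)$ and reaches $\Gamma$ in time $h + \varphi(t_1, x_0)$, yielding $\varphi(t_0, x_0) \leq h + \varphi(t_1, x_0)$.

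For the reverse inequality, I would take $\gamma_0 \in \Opt(k, t_0, x_0)$ and split into two cases according to whether $\gamma_0$ has already reached the target by time $t_1$. If $h \leq \varphi(t_0, x_0)$, set $y = \gamma_0(t_1)$, so that the DPP gives $\varphi(t_1, y) = \varphi(t_0, x_0) - h$. If $h > \varphi(t_0, x_0)$, set $y = \gamma_0(t_0 + \varphi(t_0, x_0)) \in \Gamma$, so that $\varphi(t_1, y) = 0$. In both cases one has $\varphi(t_1, y) \leq \varphi(t_0, x_0)$, and since $\abs{\dot\gamma_0} \leq K_{\max}$ almost everywhere by \ref{HypoOCP-k-Bound}, also $\abs{x_0 - y} \leq K_{\max} h$. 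Applying Lemma~\ref{lemm-varphi-Lipschitz} to compare $\varphi(t_1, x_0)$ and $\varphi(t_1, y)$ gives
\[
\varphi(t_1, x_0) \leq \varphi(t_1, y) + C\abs{x_0 - y} \leq \varphi(t_0, x_0) + C K_{\max} h,
\]
which, combined with the previous step, produces $M_t = \max(1, C K_{\max})$.

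I do not expect any genuine obstacle: the argument is a standard adaptation of the classical proof for free-time optimal control problems. The only subtle point is the split into two cases, needed so that one never has to extend $\gamma_0$ past its first exit time $\tau(t_0, \gamma_0)$; this is handled cleanly by exploiting $\varphi \equiv 0$ on $\Gamma$ in the second case.
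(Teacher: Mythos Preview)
Your proposal is correct and follows essentially the same approach as the paper: use Lemma~\ref{lemm-varphi-Lipschitz} for the spatial variation and the dynamic programming principle along an optimal trajectory $\gamma_0 \in \Opt(k, t_0, x_0)$ (with the same two-case split depending on whether $\gamma_0$ has reached $\Gamma$ by time $t_1$) for the time variation. The only cosmetic difference is that the paper passes directly through the intermediate point $(t_1, \gamma_0(t_1))$ rather than first splitting via $(t_1, x_0)$, which saves your separate ``easy direction'' step (that step being itself just the DPP inequality \eqref{eq:DPP} applied to the constant curve at $x_0$).
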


\begin{proof}
We denote by $C > 0$ the Lipschitz constant from Lemma~\ref{lemm-varphi-Lipschitz}. Let $(t_0, x_0),\, (t_1, x_1) \in \mathbbm R_+ \times \bar{\Omega}$ and assume, with no loss of generality, that $t_0 < t_1$. Fix $\gamma_0 \in \Opt(k, t_0, x_0)$ and set $x_0^\ast = \gamma_0(t_1)$. By Lemma~\ref{lemm-varphi-Lipschitz}, we have
\begin{equation}
\label{eq:varphi-Lipschitz-intermediate-step-2}
\abs{\varphi(t_1, x_0^\ast) - \varphi(t_1, x_1)} \leq C\abs{x_0^\ast - x_1}.
\end{equation}

If $t_1 \leq t_0 + \varphi(t_0, x_0)$, then, by Proposition~\ref{PropOCP-1}\ref{PropOCP-DPP}, since $\gamma_0 \in \Opt(k, t_0, x_0)$, we have $\varphi(t_1, x_0^\ast) = \varphi(t_0, x_0) - (t_1 - t_0)$, and thus
\begin{equation}
\label{eq:varphi-Lipschitz-intermediate-step-3}
\abs{\varphi(t_0, x_0) - \varphi(t_1, x_1)} \leq \abs{t_1 - t_0} + C\abs{x_0^\ast - x_1}.
\end{equation}
Otherwise, we have $t_1 > t_0 + \varphi(t_0, x_0)$, which shows that $x_0^\ast = \gamma_0(t_1) = \gamma_0(t_0 + \varphi(t_0, x_0)) \in \Gamma$, and thus $\varphi(t_1, x_0^\ast) = 0$. Combining this with \eqref{eq:varphi-Lipschitz-intermediate-step-2} and the fact that $\varphi(t_0, x_0) < t_1 - t_0$, we deduce that \eqref{eq:varphi-Lipschitz-intermediate-step-3} also holds in this case.

Since $\gamma_0$ is $K_{\max}$-Lipschitz, we have $\abs{x_0^\ast - x_0} = \abs{\gamma_0(t_1) - \gamma_0(t_0)} \leq K_{\max} \abs{t_1 - t_0}$. Hence, combining with \eqref{eq:varphi-Lipschitz-intermediate-step-3}, we deduce that
\[
\abs{\varphi(t_0, x_0) - \varphi(t_1, x_1)} \leq (C K_{\max} + 1) \abs{t_1 - t_0} + C \abs{x_0 - x_1},
\]
yielding the conclusion.
\end{proof}

The next preliminary result we present is the fact that $\varphi$ satisfies a Hamilton--Jacobi equation. This kind of result is classical and can be obtained by adapting classical proofs used in the autonomous case, such as those of \cite[Chapter~IV, Proposition~2.3]{BardiDolcetta} and \cite[Theorem~8.1.8]{CannarsaPiermarcoSinestrari}. The statement provided here can also be found in \cite[Proposition~3.5]{Dweik2020Sharp} and \cite[Theorem~4.1]{Mazanti2019Minimal}.

\begin{proposition}
\label{PropOCP-HJ}
Consider $\OCP(k)$ under hypotheses \ref{HypoOmega}, \ref{HypoGamma}, \ref{HypoOmegaGeodesic}, \ref{HypoOCP-k-Bound}, and \ref{HypoOCP-k-Lip}. The value function $\varphi$ of $\OCP(k)$ satisfies the Hamilton--Jacobi equation
\begin{equation}
\label{H-J equation}
-\partial_t \varphi(t,x)+\abs{\nabla \varphi(t,x)} k(t, x) - 1 = 0
\end{equation}
in the following sense: $\varphi$ is a viscosity subsolution of \eqref{H-J equation} in $\mathbbm R_+ \times (\Omega \setminus \Gamma)$ and a viscosity supersolution of \eqref{H-J equation} in $\mathbbm R_+ \times (\bar\Omega \setminus \Gamma)$. Moreover, $\varphi$ satisfies $\varphi(t, x) = 0$ for every $(t, x) \in \mathbbm R_+ \times \Gamma$.
\end{proposition}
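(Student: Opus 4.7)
The plan is to derive the Hamilton--Jacobi equation from the dynamic programming principle of Proposition~\ref{PropOCP-1}\ref{PropOCP-DPP}, following the classical strategy for first-order minimal-time problems, with particular attention to how the state constraint $\gamma(t) \in \bar\Omega$ forces a different spatial domain for the sub- and supersolution properties. First, the boundary condition $\varphi(t,x) = 0$ on $\mathbbm R_+ \times \Gamma$ is immediate from Definition~\ref{def:OCP}\ref{def:value-function}: the constant trajectory $\gamma \equiv x$ is admissible (since $\gamma$ never leaves $\bar\Omega$) and yields $\tau(t_0, \gamma) = 0$.

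For the subsolution property at $(t_0, x_0) \in \mathbbm R_+ \times (\Omega \setminus \Gamma)$, I would fix a test function $\psi \in \mathbf C^1$ such that $\varphi - \psi$ attains a local maximum at $(t_0, x_0)$. For any fixed $v \in \bar B$, I would consider the curve $\gamma$ solving $\dot\gamma(t) = k(t, \gamma(t))\, v$ with $\gamma(t_0) = x_0$; since $x_0 \in \Omega$ and $\Omega$ is open, this curve stays in $\bar\Omega$ for $h$ small enough, so $\gamma \in \Adm(k)$ on $[t_0, t_0+h]$. The DPP inequality \eqref{eq:DPP} gives $\varphi(t_0, x_0) - \varphi(t_0 + h, \gamma(t_0 + h)) \leq h$, hence by the local maximum property
\[
\psi(t_0, x_0) - \psi(t_0 + h, \gamma(t_0+h)) \leq h.
\]
Dividing by $h$, passing to the limit, and then taking the supremum over $v \in \bar B$ yields $-\partial_t \psi(t_0, x_0) + \abs{\nabla \psi(t_0, x_0)}\, k(t_0, x_0) - 1 \leq 0$.

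For the supersolution property at $(t_0, x_0) \in \mathbbm R_+ \times (\bar\Omega \setminus \Gamma)$, I would let $\psi \in \mathbf C^1$ be such that $\varphi - \psi$ attains a local minimum at $(t_0, x_0)$ and pick an optimal trajectory $\gamma \in \Opt(k, t_0, x_0)$ provided by Proposition~\ref{PropOCP-1}\ref{PropOCP-ExistsOptimal}, with associated control $u$. Because $x_0 \notin \Gamma$ and $\Gamma$ is closed, $\tau(t_0, \gamma) > 0$, so for small $h$ the equality case of the DPP applies, giving $\varphi(t_0, x_0) - \varphi(t_0 + h, \gamma(t_0+h)) = h$. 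The local minimum property then yields $\psi(t_0, x_0) - \psi(t_0 + h, \gamma(t_0+h)) \geq h$. Expanding this as
\[
-\int_{t_0}^{t_0+h}\partial_t \psi(s, \gamma(s)) \diff s - \int_{t_0}^{t_0 + h} \nabla \psi(s, \gamma(s)) \cdot k(s, \gamma(s)) u(s) \diff s \geq h,
\]
dividing by $h$, and taking the liminf as $h \to 0^+$ (using continuity of $\psi$, its derivatives, and $k$, plus $\abs{u(s)} \leq 1$ and lower-semicontinuity of the norm), yields $-\partial_t \psi(t_0, x_0) + \abs{\nabla \psi(t_0, x_0)}\, k(t_0, x_0) - 1 \geq 0$.

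The main subtlety — and the reason for the asymmetry between the two domains — is precisely the state constraint $\gamma(t) \in \bar\Omega$. For the supersolution property, using the optimal trajectory is harmless even on $\partial\Omega$, since optimal trajectories are admissible by definition. For the subsolution property, however, the argument needs to probe arbitrary constant controls $v \in \bar B$, and for $x_0 \in \partial\Omega$ the resulting trajectory may immediately leave $\bar\Omega$ when $v$ points outward, destroying admissibility; this is why the subsolution inequality is restricted to $\Omega \setminus \Gamma$. The boundary behavior on $\partial\Omega \setminus \Gamma$ is not addressed by this classical argument and will need the separate treatment carried out later in Section~\ref{sec:boundary-varphi}.
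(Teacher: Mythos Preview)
Your proof is correct and follows precisely the classical dynamic-programming-principle argument that the paper invokes; the paper does not give its own proof of this proposition but instead refers to the standard references \cite[Chapter~IV, Proposition~2.3]{BardiDolcetta} and \cite[Theorem~8.1.8]{CannarsaPiermarcoSinestrari}, whose strategy is exactly what you carry out. Your explanation of why the state constraint forces the subsolution property to be stated only on $\Omega \setminus \Gamma$ while the supersolution property holds on $\bar\Omega \setminus \Gamma$ is also accurate.
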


We next provide in Proposition~\ref{PropOCP-2} two properties of $\varphi$, the first one providing a lower bound on the rate of change of $\varphi$ over time at a fixed position, and the second one characterizing the optimal control at points at which $\varphi$ and the optimal trajectory are differentiable. The first result was shown in \cite[Proposition~4.4]{Mazanti2019Minimal}, once again under the stronger assumption that $k \in \Lip(\mathbbm R_+ \times \bar\Omega; \mathbbm R_+)$, but its proof was later refined in \cite[Proposition~3.9]{Dweik2020Sharp} in order to use only assumptions \ref{HypoOCP-k-Bound} and \ref{HypoOCP-k-Lip}. Even though \cite{Dweik2020Sharp} considers only minimal-time mean field games without state constraints, the proof of \cite[Proposition~3.9]{Dweik2020Sharp} remains unchanged when state constraints are present. The second result of Proposition~\ref{PropOCP-2} follows as a consequence of the first one and Proposition~\ref{PropOCP-HJ}, as detailed, for instance, in \cite[Corollary~4.1]{Mazanti2019Minimal}.

\begin{proposition}
\label{PropOCP-2}
Consider $\OCP(k)$ under hypotheses \ref{HypoOmega}, \ref{HypoGamma}, \ref{HypoOmegaGeodesic}, \ref{HypoOCP-k-Bound}, and \ref{HypoOCP-k-Lip}, and let $\varphi$ be the value function of $\OCP(k)$.
\begin{enumerate}
\item\label{PropOCP-LowerBound} There exists $c > 0$ such that, for every $x \in \bar\Omega$ and $t_0, t_1 \in \mathbbm R_+$ with $t_0 \neq t_1$, we have
\[
\frac{\varphi(t_1, x) - \varphi(t_0, x)}{t_1 - t_0} \geq c - 1.
\]
In particular, if $\varphi$ is differentiable at $(t_0, x)$, then $\partial_t \varphi(t_0, x) \geq c - 1$ and $\abs{\nabla\varphi(t_0, x)} \geq c$.
\item\label{PropOCP-OptimalControl} For every $(t_0, x_0) \in \mathbbm R_+ \times \bar\Omega$, if $\gamma \in \Opt(k, t_0, x_0)$, $t \in [t_0, t_0 + \varphi(t_0, x_0))$, and $\varphi$ is differentiable at $(t, \gamma(t))$, then $\abs{\nabla\varphi(t, \gamma(t))} \neq 0$ and
\[
\dot\gamma(t) = - k(t, \gamma(t)) \frac{\nabla\varphi(t, \gamma(t))}{\abs{\nabla\varphi(t, \gamma(t))}}.
\]
\end{enumerate}
\end{proposition}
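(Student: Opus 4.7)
For part (1), I would argue by a time-reparametrization construction modelled on the proof of Lemma~\ref{lemm-varphi-Lipschitz}. By symmetry it suffices, for fixed $x \in \bar\Omega$ and $t_0 < t_1$, to show that $\varphi(t_0,x) - \varphi(t_1,x) \leq (1-c)(t_1-t_0)$ for some constant $c \in (0,1)$ depending only on $K_{\min}$, $K_{\max}$, $L$, and the bound $T$ from Proposition~\ref{PropOCP-1}\ref{PropOCP-VarphiBounded}. Given $\gamma_1 \in \Opt(k,t_1,x)$, set $t_1^\ast = t_1 + \varphi(t_1,x)$ and $\gamma_0(t) := \gamma_1(\psi(t))$, where $\psi:[t_0,+\infty) \to [t_1,+\infty)$ solves $\dot\psi(t) = k(t,\gamma_0(t))/k(\psi(t),\gamma_0(t))$ with $\psi(t_0) = t_1$. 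Since $\dot\psi$ takes values in $[K_{\min}/K_{\max}, K_{\max}/K_{\min}]$, $\gamma_0$ is admissible from $(t_0,x)$ and reaches $\Gamma$ at $t_0^\ast := \psi^{-1}(t_1^\ast)$, so $\varphi(t_0,x) \leq t_0^\ast - t_0$. The substitution identity
\[
\int_{t_0}^{t_0^\ast} k(r,\gamma_0(r))\,\diff r = \int_{t_1}^{t_1^\ast} k(s,\gamma_1(s))\,\diff s,
\]
combined with a Gr\"onwall-type estimate in the same spirit as the last third of the proof of Lemma~\ref{lemm-varphi-Lipschitz} (exploiting \ref{HypoOCP-k-Lip}, the $K_{\max}$-Lipschitz regularity of $\gamma_1$, and the uniform bound $\varphi \leq T$), would then yield the target estimate $t_0^\ast - t_0 - \varphi(t_1,x) \leq (1-c)(t_1-t_0)$, completing the proof of the first assertion.

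For part (2), fix $\gamma \in \Opt(k,t_0,x_0)$ and $t \in [t_0,t_0+\varphi(t_0,x_0))$ at which $\varphi$ is differentiable. The equality case of Proposition~\ref{PropOCP-1}\ref{PropOCP-DPP} yields $\varphi(s,\gamma(s)) = \varphi(t_0,x_0) - (s-t_0)$ on $[t_0,t_0+\varphi(t_0,x_0)]$. Differentiability of $\varphi$ at $(t,\gamma(t))$ together with the $K_{\max}$-Lipschitz regularity of $\gamma$ allows one to expand both sides of this identity near $s = t$; passing to the limit $s \to t^+$ along any subsequence on which the incremental ratio $(\gamma(s)-\gamma(t))/(s-t)$ converges to some $v$ gives $\partial_t\varphi(t,\gamma(t)) + \nabla\varphi(t,\gamma(t)) \cdot v = -1$. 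Since $\gamma(t) \notin \Gamma$, Proposition~\ref{PropOCP-HJ} gives the classical identity $-\partial_t\varphi(t,\gamma(t)) + \abs{\nabla\varphi(t,\gamma(t))}\,k(t,\gamma(t)) = 1$ at $(t,\gamma(t))$; summing yields $\nabla\varphi(t,\gamma(t)) \cdot v = -\abs{\nabla\varphi(t,\gamma(t))}\,k(t,\gamma(t))$. Combining this with $\abs{v} \leq k(t,\gamma(t))$ (obtained from admissibility via Lebesgue differentiation and the continuity of $k$) and the Cauchy-Schwarz inequality forces both inequalities to saturate, so $v = -k(t,\gamma(t))\,\nabla\varphi(t,\gamma(t))/\abs{\nabla\varphi(t,\gamma(t))}$; uniqueness of this subsequential limit implies that $\dot\gamma(t)$ exists and equals this value, the denominator being nonzero by the lower bound on $\abs{\nabla\varphi}$ asserted in part (1). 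That lower bound in turn follows from the time-ratio estimate by letting $t_1 \to t_0$, which gives $\partial_t\varphi(t_0,x) \geq c - 1$, and reading the classical Hamilton--Jacobi identity as $\abs{\nabla\varphi(t_0,x)} = (1 + \partial_t\varphi(t_0,x))/k(t_0,x) \geq c/K_{\max}$, after renaming the constant so that both estimates hold with a single $c > 0$.

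The main obstacle is the quantitative estimate in part (1). The naive use of the substitution identity only yields $t_0^\ast - t_0 \leq (K_{\max}/K_{\min})\,\varphi(t_1,x)$, which \emph{fails} to have the claimed additive form $\varphi(t_1,x) + (1-c)(t_1-t_0)$ when $\varphi(t_1,x)$ is of order one but $t_1-t_0$ is small. Converting this multiplicative error into an additive one of order $(t_1-t_0)$ requires the Gr\"onwall argument to absorb the (possibly large) value of $\varphi(t_1,x)$ into a constant depending only on $T$, $K_{\min}$, $K_{\max}$, and $L$; this is the technical heart of \cite[Proposition~3.9]{Dweik2020Sharp}, whose argument carries over unchanged to the state-constrained setting considered here.
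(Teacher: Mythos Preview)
Your approach matches the paper's: for part~(a) the paper simply cites \cite[Proposition~3.9]{Dweik2020Sharp}, whose proof is precisely the time-reparametrization plus Gr\"onwall argument you sketch, and for part~(b) the paper cites \cite[Corollary~4.1]{Mazanti2019Minimal}, which is the Cauchy--Schwarz saturation argument you give.

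One caveat worth flagging: in part~(b) you invoke Proposition~\ref{PropOCP-HJ} to obtain the \emph{classical} Hamilton--Jacobi identity at $(t,\gamma(t))$, but that proposition only asserts the subsolution property on $\mathbbm R_+ \times (\Omega \setminus \Gamma)$, i.e., in the interior. If $\gamma(t) \in \partial\Omega \setminus \Gamma$, Proposition~\ref{PropOCP-HJ} alone only gives the supersolution inequality $|\nabla\varphi|\,k \geq 1 + \partial_t\varphi$, which is not enough to force equality in Cauchy--Schwarz. The paper's citation-based proof inherits the same issue, since \cite{Mazanti2019Minimal} has no state constraints; the gap is closed only later in Proposition~\ref{prop:HJ-satisfied-classical-sense}, which establishes the classical identity at boundary differentiability points. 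For the lower bound $|\nabla\varphi| \geq c$ in part~(a), by contrast, the supersolution inequality alone already suffices, so that part of your argument goes through on all of $\bar\Omega \setminus \Gamma$ without this subtlety.
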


We now turn to preliminary results concerning the mean field game $\MFG(K)$. The main result on $\MFG(K)$ we present here is the following, asserting existence of equilibria.

\begin{proposition}
\label{prop:exists-equilibrium}
Consider the mean field game $\MFG(K)$ under hypotheses \ref{HypoOmega}, \ref{HypoGamma}, \ref{HypoOmegaGeodesic}, \ref{HypoMFG-K-Bound}, and \ref{HypoMFG-K-Lip}. Then, for every $m_0\in \mathcal{P}(\bar{\Omega})$, there exists an equilibrium $Q\in \mathcal{P}(\mathbf{C}(\bar{\Omega}))$ for $\MFG(K)$ with initial condition $m_0$.
\end{proposition}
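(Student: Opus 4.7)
The plan is to find an equilibrium as a fixed point of a set-valued map via the Kakutani--Fan--Glicksberg theorem. Because of the speed bound \ref{HypoMFG-K-Bound}, every admissible trajectory of the control system \eqref{Control sys MFG} is $K_{\max}$-Lipschitz. Consequently, I would set
\[
\mathcal{K} = \bigl\{ Q \in \mathcal P(\mathbf C(\bar\Omega)) \suchthat e_{0\#}Q = m_0,\ Q(\Lip_{K_{\max}}(\bar\Omega)) = 1 \bigr\}.
\]
By Arzelà--Ascoli, $\Lip_{K_{\max}}(\bar\Omega) \cap \{\gamma : \gamma(0) \in \bar\Omega\}$ is compact in $\mathbf C(\bar\Omega)$, so by Prokhorov's theorem $\mathcal K$ is tight; being also closed under weak convergence and convex, $\mathcal K$ is a convex compact subset of the locally convex space of finite signed Borel measures on $\mathbf C(\bar\Omega)$ endowed with the narrow topology. $\mathcal K$ is nonempty because it contains the pushforward of $m_0$ under $x \mapsto (t \mapsto x)$.

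\textbf{The fixed-point map.} For $Q \in \mathcal K$, define the associated dynamics $k_Q(t,x) = K(e_{t\#}Q, x)$ and set
\[
\Psi(Q) = \bigl\{ Q' \in \mathcal K \suchthat Q'\text{-a.e.\ }\gamma \text{ satisfies } \gamma \in \Opt(k_Q, 0, \gamma(0)) \bigr\}.
\]
Note that \ref{HypoMFG-K-Bound} and \ref{HypoMFG-K-Lip} force $k_Q$ to satisfy \ref{HypoOCP-k-Bound} and \ref{HypoOCP-k-Lip} with the same constants, so the results of Section~\ref{sec:prelim} apply. A fixed point of $\Psi$ is exactly an equilibrium. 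Convexity of $\Psi(Q)$ is immediate, since the two defining conditions are linear in $Q'$. For nonemptiness, I would verify that the set-valued map $x \mapsto \Opt(k_Q, 0, x)$ has nonempty closed values (Proposition~\ref{PropOCP-1}\ref{PropOCP-ExistsOptimal}) and closed graph in $\bar\Omega \times \mathbf C(\bar\Omega)$, which follows from the closure of the admissible set under uniform convergence and the continuity of $(t_0, x_0) \mapsto \varphi(t_0, x_0)$ (Proposition~\ref{varphi is Lipschitz}) through the characterization of optimality via the dynamic programming principle (Proposition~\ref{PropOCP-1}\ref{PropOCP-DPP}). The Kuratowski--Ryll-Nardzewski measurable selection theorem then produces a Borel selection $s : \bar\Omega \to \mathbf C(\bar\Omega)$ with $s(x) \in \Opt(k_Q, 0, x)$, and $s_\# m_0 \in \Psi(Q)$.

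\textbf{Closed graph of $\Psi$.} This is the technically delicate step. Take $Q_n \to Q$ narrowly in $\mathcal K$ with $Q'_n \in \Psi(Q_n)$ and $Q'_n \to Q'$. The condition $e_{0\#}Q' = m_0$ is stable under narrow convergence. The crucial point is that $k_{Q_n} \to k_Q$ uniformly on $[0, T] \times \bar\Omega$ for every $T > 0$: continuity of $t \mapsto e_{t\#}Q_n$ (from the uniform Lipschitz bound on curves), continuity of $K$ jointly in $(\mu, x)$, compactness of $\mathcal P(\bar\Omega)$, and a standard equicontinuity argument give this convergence. Then, by stability of value functions of minimal-time problems, the value functions $\varphi_n$ associated with $k_{Q_n}$ converge uniformly to the value function $\varphi$ associated with $k_Q$. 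Using the dynamic programming principle \eqref{eq:DPP}, optimality of $\gamma$ for $\OCP(k_{Q_n})$ from $(0, \gamma(0))$ is equivalent to $\gamma$ being admissible and $\varphi_n(t, \gamma(t)) + t$ being constant on $[0, \tau(0, \gamma)]$. This characterization passes to the limit under uniform convergence of $\varphi_n$ and $k_{Q_n}$ combined with uniform convergence of $Q'_n$-almost every curve along suitable subsequences, showing that $Q'$-a.e.\ limit curve lies in $\Opt(k_Q, 0, \gamma(0))$; a standard argument with test functions in $\mathbf C_b(\mathbf C(\bar\Omega))$ supported on the optimality condition lets one conclude $Q' \in \Psi(Q)$.

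\textbf{Conclusion.} With $\mathcal K$ convex and compact, and $\Psi : \mathcal K \rightrightarrows \mathcal K$ having nonempty convex values and closed graph, the Kakutani--Fan--Glicksberg theorem yields $Q \in \Psi(Q)$, which is the desired equilibrium. The main obstacle is the closed-graph argument, specifically passing the optimality condition to the limit; I would expect it to go through cleanly thanks to the uniform Lipschitz bound on trajectories and the continuity properties of the value function established in Section~\ref{sec:prelim}.
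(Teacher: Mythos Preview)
Your proposal is correct and follows essentially the same approach as the paper: the paper omits the proof and refers to \cite[Theorem~4.4]{Dweik2020Sharp} and \cite[Theorem~5.1]{SadeghiMulti}, both of which establish existence via the Kakutani--Fan--Glicksberg fixed point theorem on a compact convex set of measures concentrated on $K_{\max}$-Lipschitz curves, exactly as you outline. The only place state constraints enter is in the Lipschitz continuity of the value function needed for the closed-graph step, which is supplied here by Proposition~\ref{varphi is Lipschitz}.
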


A result similar to Proposition~\ref{prop:exists-equilibrium} was shown in \cite[Theorem~5.1]{Mazanti2019Minimal}, but requiring the stronger assumption that $K \in \Lip(\mathcal P(\bar\Omega) \times \bar\Omega; \mathbbm R_+)$ instead of \ref{HypoMFG-K-Lip}. Proofs using only the weaker assumption \ref{HypoMFG-K-Lip} were provided in \cite[Theorem~4.4]{Dweik2020Sharp} and \cite[Theorem~5.1]{SadeghiMulti}. Both references consider only mean field games without state constraints, but the only point in those proofs where the absence of state constraints is used is to show Lipschitz continuity of value functions of optimal control problems with functions $k$ satisfying \ref{HypoOCP-k-Bound} and \ref{HypoOCP-k-Lip}. Since Proposition~\ref{varphi is Lipschitz} above proves this fact also in the presence of state constraints, the proof of Proposition~\ref{prop:exists-equilibrium} can be carried out exactly as in \cite[Theorem~4.4]{Dweik2020Sharp} and \cite[Theorem~5.1]{SadeghiMulti}, and it is thus omitted here. We also recall that equilibria of $\MFG(K)$ may fail to be unique, as illustrated in \cite[Remark~7.1]{Mazanti2019Minimal} in the case of a population without interaction and through a heuristic argument in \cite[Remark~5.7]{SadeghiMulti} in the case of a multi-population mean field game.

The definition of equilibrium used here, which follows that provided in \cite{Mazanti2019Minimal, SadeghiMulti}, requires $Q$-almost every $\gamma \in \mathbf C(\bar\Omega)$ to be an optimal trajectory for $\OCP(k_Q)$ from $(0, \gamma(0))$. An apparently stronger notion of equilibrium can be formulated by requiring \emph{every} trajectory in the support of $Q$ to be optimal for the same problem, a notion used, for instance, in \cite{Dweik2020Sharp, CannarsaPiermarco}. We prove in the next proposition that, for the current model, both notions are equivalent.

\begin{proposition}
\label{prop:equilibrium-support}
Consider the mean field game $\MFG(K)$ under hypotheses \ref{HypoOmega}, \ref{HypoGamma}, \ref{HypoOmegaGeodesic}, \ref{HypoMFG-K-Bound}, and \ref{HypoMFG-K-Lip}, and let $m_0 \in \mathcal P(\bar\Omega)$. A measure $Q\in \mathcal{P}(\mathbf{C}(\bar{\Omega}))$ is an equilibrium of $\MFG(K)$ with initial condition $m_0$ if and only if $e_{0\#}Q = m_0$ and every $\gamma \in \supp(Q)$ satisfies $\gamma \in \Opt(k_Q, 0, \gamma(0))$, where $k_Q:\mathbbm R_+ \times \bar{\Omega} \to \mathbbm R_+$ is defined for $t \geq 0$ and $x \in \bar\Omega$ by $k_Q(t,x)=K(e_{t\#}Q,x)$.
\end{proposition}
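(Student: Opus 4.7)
The \emph{if} direction is immediate: since $\supp(Q)$ has full $Q$-measure, if every $\gamma \in \supp(Q)$ lies in $\Opt(k_Q, 0, \gamma(0))$, then so does $Q$-almost every $\gamma$, which is precisely Definition~\ref{defi:equilibrium}. The content of the statement is thus the \emph{only if} direction, and my strategy is to show that the set
\[
A = \bigl\{\gamma \in \mathbf{C}(\bar\Omega) \suchthat \gamma \in \Opt(k_Q, 0, \gamma(0))\bigr\}
\]
is closed in $\mathbf{C}(\bar\Omega)$. Once this is established, from $Q(A) = 1$ and the defining property that $\supp(Q)$ is contained in every closed set of full $Q$-measure, we obtain $\supp(Q) \subset A$, as required.

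To prove that $A$ is closed, I take a sequence $(\gamma_n)_{n \in \mathbbm N}$ in $A$ converging to some $\gamma \in \mathbf{C}(\bar\Omega)$ uniformly on compact time intervals, and I need to check that $\gamma \in A$. The first step is admissibility: every $\gamma_n$ is $K_{\max}$-Lipschitz by \ref{HypoMFG-K-Bound}, so $\gamma$ is $K_{\max}$-Lipschitz as well. Let $u_n$ be the optimal control associated with $\gamma_n$; by weak-$\ast$ compactness of the unit ball in $L^\infty_{\mathrm{loc}}(\mathbbm R_+; \mathbbm R^d)$, up to a subsequence $u_n \xrightharpoonup{\ast} u$ for some measurable $u$ with $u(t) \in \bar B$ almost everywhere (using convexity of $\bar B$). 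Since $\gamma_n \to \gamma$ uniformly on compacts and $k_Q$ is continuous and bounded, $k_Q(\cdot, \gamma_n(\cdot)) \to k_Q(\cdot, \gamma(\cdot))$ uniformly on compacts, and passing to the limit in $\dot\gamma_n = k_Q(\cdot, \gamma_n) u_n$ in the distributional sense yields $\dot\gamma = k_Q(\cdot, \gamma) u$ almost everywhere, so $\gamma \in \Adm(k_Q)$.

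The second step is optimality. Let $\varphi_Q$ be the value function of $\OCP(k_Q)$, which is Lipschitz continuous by Proposition~\ref{varphi is Lipschitz}. Since each $\gamma_n$ is optimal from $(0, \gamma_n(0))$, we have $\tau(0, \gamma_n) = \varphi_Q(0, \gamma_n(0))$, and the right-hand side converges to $\varphi_Q(0, \gamma(0))$ as $n \to +\infty$; call this limit $T^\ast$. For every $\varepsilon > 0$ and $n$ large enough, $\tau(0, \gamma_n) \le T^\ast + \varepsilon$, so $\gamma_n(T^\ast + \varepsilon) \in \Gamma$; uniform convergence and closedness of $\Gamma$ give $\gamma(T^\ast + \varepsilon) \in \Gamma$, and then continuity of $\gamma$ and $\varepsilon \to 0$ give $\gamma(T^\ast) \in \Gamma$, i.e., $\tau(0, \gamma) \le T^\ast$. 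The reverse inequality $\tau(0, \gamma) \ge \varphi_Q(0, \gamma(0)) = T^\ast$ is immediate from the definition of $\varphi_Q$, so $\tau(0, \gamma) = \varphi_Q(0, \gamma(0))$. Finally, for the constancy condition on $[\tau(0,\gamma), +\infty)$: each $\gamma_n$ is constant on $[\tau(0,\gamma_n), +\infty)$, and since $\tau(0, \gamma_n) \to T^\ast = \tau(0, \gamma)$, uniform convergence on compacts transfers this property to $\gamma$. Thus $\gamma \in \Opt(k_Q, 0, \gamma(0))$ and $A$ is closed.

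The only genuine subtlety in the argument is the passage to the limit in the control equation, which needs the standard combination of weak-$\ast$ compactness of controls with uniform convergence of the (continuous, bounded) dynamics $k_Q$ along the state; no further regularity than \ref{HypoMFG-K-Bound} is used. Everything else is a direct manipulation of the Lipschitz continuity of $\varphi_Q$, the closedness of $\Gamma$, and uniform convergence on compact time intervals.
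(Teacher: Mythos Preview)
Your proof is correct and shares the paper's core strategy: both show that the set of optimal trajectories for $\OCP(k_Q)$ starting at time $0$ is closed under uniform convergence on compacts, so that $\supp(Q)$, being contained in every closed full-measure set, must lie in it. The paper phrases this by fixing $\gamma\in\supp(Q)$ and producing an approximating sequence from the definition of support, but the content is the same.

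The one place where the arguments genuinely diverge is the admissibility step. You extract a weak-$\ast$ limit control and pass to the limit in the ODE; the paper avoids controls altogether by using the integral characterization of admissibility. From $\abs{\gamma_n(t_1)-\gamma_n(t_0)}\le\int_{t_0}^{t_1}k_Q(s,\gamma_n(s))\,\diff s$ it passes to the limit directly and then differentiates to get $\abs{\dot\gamma(t)}\le k_Q(t,\gamma(t))$ a.e., which is equivalent to $\gamma\in\Adm(k_Q)$. This is lighter and sidesteps weak-$\ast$ compactness entirely. For optimality the paper is also marginally more direct, passing to the limit in $\gamma_n(\varphi_Q(0,\gamma_n(0)))\in\Gamma$ in one step via the continuity of $\varphi_Q$, without your auxiliary $\varepsilon$. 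On the other hand, you explicitly verify the constancy of $\gamma$ on $[\tau(0,\gamma),+\infty)$, which the paper leaves implicit.
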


\begin{proof}
Notice first that $t \mapsto e_{t\#} Q$ is Lipschitz continuous with Lipschitz constant $K_{\max}$, since, for $t_0, t_1 \in \mathbbm R_+$, we have
\begin{align*}
\mathbf W_1(e_{t_0\#} Q, e_{t_1\#} Q) & = \sup_{\Phi \in \Lip_1(\bar\Omega; \mathbbm R)} \int_{\bar\Omega} \Phi(x) \diff (e_{t_0\#} Q - e_{t_1\#} Q)(x) \displaybreak[0] \\
& = \sup_{\Phi \in \Lip_1(\bar\Omega; \mathbbm R)} \int_{\mathbf C(\bar\Omega)} \left[\Phi(\gamma(t_0)) - \Phi(\gamma(t_1))\right] \diff Q(\gamma) \leq K_{\max} \abs{t_0 - t_1}.
\end{align*}
In particular, $k_Q$ satisfies \ref{HypoOCP-k-Bound} and \ref{HypoOCP-k-Lip}.

Let us assume that $Q \in \mathcal P(\mathbf C(\bar\Omega))$ is such that $e_{0\#}Q = m_0$ and every $\gamma \in \supp(Q)$ satisfies $\gamma \in \Opt(k_Q, 0, \gamma(0))$. Since $\supp(Q)$ is a set of full $Q$ measure, we deduce immediately that $Q$-almost every $\gamma \in \mathbf C(\bar\Omega)$ satisfies $\gamma \in \Opt(k_Q, 0, \gamma(0))$, and hence $Q$ is an equilibrium of $\MFG(K)$ with initial condition $m_0$.

To prove the converse implication, let us now assume that $Q$ is an equilibrium of $\MFG(K)$ with initial condition $m_0$. Fix $\gamma \in \supp(Q)$. By definition of support, for every open neighborhood $V$ of $\gamma$ in $\mathbf C(\bar\Omega)$, we have $Q(V) > 0$, and thus, by assumption, there exists $\gamma_V \in V$ such that $\gamma_V \in \Opt(k_Q, 0, \gamma_V(0))$. In particular, there exists a sequence $(\gamma_n)_{n \in \mathbbm N}$ such that $\gamma_n \in \Opt(k_Q, 0, \gamma_n(0))$ for every $n \in \mathbbm N$ and $\gamma_n \to \gamma$ in the topology of $\mathbf C(\bar\Omega)$ (i.e., uniformly on compact subsets of $\mathbbm R_+$) as $n \to +\infty$.

Since $\gamma_n \in \Opt(k_Q, 0, \gamma_n(0))$, we have in particular $\gamma_n \in \Adm(k_Q)$, which implies that, for every $t_0, t_1 \in \mathbbm R_+$, we have
\[
\abs{\gamma_n(t_1) - \gamma_n(t_0)} \leq \int_{t_0}^{t_1} k_Q(s, \gamma_n(s)) \diff s.
\]
Letting $n \to +\infty$, we deduce that
\[
\abs{\gamma(t_1) - \gamma(t_0)} \leq \int_{t_0}^{t_1} k_Q(s, \gamma(s)) \diff s,
\]
which shows in particular that $\gamma$ is $K_{\max}$-Lipschitz continuous. Moreover, dividing by $\abs{t_1 - t_0}$ and letting $t_1 \to t_0$, we deduce that $\abs{\dot\gamma(t)} \leq k_Q(t, \gamma(t))$ for almost every $t \in \mathbbm R_+$, showing that $\gamma \in \Adm(k_Q)$.

To prove optimality of $\gamma$, notice that $\gamma_n(\varphi_Q(0, \gamma_n(0))) \in \Gamma$, where $\varphi_Q$ denotes the value function of $\OCP(k_Q)$. Since $\Gamma$ is closed and, by Proposition~\ref{varphi is Lipschitz}, $\varphi_Q$ is continuous, we deduce by letting $n \to +\infty$ that $\gamma(\varphi_Q(0, \gamma(0))) \in \Gamma$, showing that $\tau(0, \gamma) \leq \varphi_Q(0, \gamma(0))$. On the other hand, since $\gamma \in \Adm(k_Q)$, we have $\varphi_Q(0, \gamma(0)) \leq \tau(0, \gamma)$ by the definition \eqref{value function} of $\varphi_Q$, yielding that $\tau(0, \gamma) = \varphi_Q(0, \gamma(0))$. Thus $\gamma \in \Opt(k_Q, 0, \gamma(0))$, as required.
\end{proof}

\section{Further properties of the optimal control problem}
\label{More on OPT}

We now provide some further results on the optimal control problem $\OCP(k)$. In the references \cite{Mazanti2019Minimal, Dweik2020Sharp, SadeghiMulti}, additional properties of the value function and of optimal trajectories, such as $\mathbf{C}^{1, 1}$ regularity of optimal trajectories or differentiability of the value function along optimal trajectories, were obtained only for optimal control problems without state constraints. The technique used in those references is to apply the (unconstrained) Pontryagin Maximum Principle to obtain further properties of optimal trajectories that can then be used to deduce additional results on $\OCP(k)$. The main difficulty in generalizing this technique is that, even though versions of Pontryagin Maximum Principle taking into account control systems with state constraints such as \eqref{General control sys} exist in the literature (see, e.g., \cite[Chapter~5]{Clarke}), it is a difficult problem to get nice additional properties of $\OCP(k)$ from their conclusions.

The strategy we follow in this section relies instead on a penalization technique adapted from \cite{CannarsaCastelpietraCardaliaguet}, which consists in considering an optimal control problem with no state constraints but such that the maximal speed of each agent decays fast to $0$ as the agent moves away from $\bar\Omega$. It is then possible to show that, if the penalization parameter is small enough, optimal trajectories of the penalized problem never leave $\bar\Omega$ and thus they coincide with optimal trajectories of the original problem with state constraints. As consequences, we provide a boundary condition for the Hamilton--Jacobi equation \eqref{H-J equation} and a characterization of optimal controls, the latter being a key ingredient for showing, in Section~\ref{sec MFG system}, that equilibria of $\MFG(K)$ satisfy an MFG system.

\subsection{Penalized optimal control problem}
\label{sec:penalized}

We present in this section the penalized optimal control problem we consider in order to study optimal trajectories of $\OCP(k)$. Before turning to the core of this section, let us first recall, in the next proposition, some classical consequences of \ref{HypoOmegaC11} on the signed distance to $\partial\Omega$, whose proofs can be found, for instance, in \cite[Theorems~5.1 and 5.7]{DelfourZolesio}.

\begin{proposition}
\label{prop:OmegaC11}
Let $\Omega \subset \mathbbm R^d$ be a set satisfying \ref{HypoOmegaC11}.
\begin{enumerate}
\item The signed distance $d^{\pm}_{\partial \Omega}$ is Lipschitz continuous on $\mathbbm R^d$, with Lipschitz constant equal to $1$.
\item\label{item:W-dpm} There exists a neighborhood $W$ of $\partial\Omega$ such that the signed distance to $\partial\Omega$ satisfies $d^{\pm}_{\partial \Omega}\in \mathbf{C}^{1,1}(W; \mathbbm R)$. Moreover, $\abs[\big]{\nabla d^{\pm}_{\partial \Omega}(x)}=1$ for every $x\in W$.
\item For every $x \in \partial\Omega$, $\nabla d^{\pm}_{\partial \Omega}(x)$ is the outward unit normal to $\Omega$ at $x$.
\end{enumerate}
\end{proposition}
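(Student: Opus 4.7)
The plan is to prove the three parts in order, exploiting the fact that only part (2) genuinely uses the $\mathbf{C}^{1,1}$ structure of $\partial\Omega$.

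For part (1), I would argue as follows. Both $d_{\bar\Omega}$ and $d_{\mathbbm R^d\setminus\Omega}$ are $1$-Lipschitz as distance functions to closed sets; one cannot directly conclude $1$-Lipschitz regularity of the difference, but one can exploit the fact that at most one of the two terms is nonzero at any given point. Concretely, given $x, y \in \mathbbm R^d$, if both lie in $\bar\Omega$ or both lie outside $\Omega$, then $\abs[\big]{d^\pm_{\partial\Omega}(x) - d^\pm_{\partial\Omega}(y)}$ reduces to $\abs[\big]{d_{\partial\Omega}(x) - d_{\partial\Omega}(y)} \leq \abs{x-y}$. Otherwise, the segment $[x,y]$ crosses $\partial\Omega$ at some point $z$, and then $d_{\partial\Omega}(x) + d_{\partial\Omega}(y) \leq \abs{x-z} + \abs{z-y} = \abs{x-y}$, which yields the bound since in this case the two signed distances have opposite signs and the left-hand side equals $d_{\partial\Omega}(x) + d_{\partial\Omega}(y)$.

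For part (2), I would construct a tubular neighborhood of $\partial\Omega$. Since $\partial\Omega$ is a compact $\mathbf{C}^{1,1}$ hypersurface, the outward unit normal field $\mathbf n:\partial\Omega \to \mathbbm S^{d-1}$ is Lipschitz continuous, and the map $\Phi: \partial\Omega \times (-\delta, \delta) \to \mathbbm R^d$, $\Phi(y,s) = y + s\,\mathbf n(y)$, is a bi-Lipschitz homeomorphism onto an open neighborhood $W$ of $\partial\Omega$ for $\delta>0$ sufficiently small (this uses compactness of $\partial\Omega$ together with a uniform bound on the Lipschitz constant of $\mathbf n$, which plays the role of a uniform bound on curvatures). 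On $W$, the nearest-point projection $\pi_{\partial\Omega}(x)$ is uniquely defined and Lipschitz continuous, and $d^\pm_{\partial\Omega}(x) = s$ where $(y,s) = \Phi^{-1}(x)$. Differentiating then gives $\nabla d^\pm_{\partial\Omega}(x) = \mathbf n(\pi_{\partial\Omega}(x))$, which is Lipschitz continuous and of unit norm on $W$, yielding the $\mathbf{C}^{1,1}$ regularity and the eikonal property $\abs[\big]{\nabla d^\pm_{\partial\Omega}(x)} = 1$.

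Part (3) is then immediate from the explicit expression $\nabla d^\pm_{\partial\Omega}(x) = \mathbf n(\pi_{\partial\Omega}(x))$ obtained in part (2), specialized to $x \in \partial\Omega$, where $\pi_{\partial\Omega}(x)=x$. The main technical obstacle is the construction of the tubular neighborhood in part (2): one needs injectivity of $\Phi$ on a uniform thickness $\delta$, which relies on a quantitative reach estimate for $\partial\Omega$. This is precisely where $\mathbf{C}^{1,1}$ regularity (rather than merely $\mathbf{C}^1$) is used, because it provides the Lipschitz bound on $\mathbf n$ that in turn controls the reach from below; under only $\mathbf{C}^1$ regularity, the tubular neighborhood thickness could degenerate. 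Since all of these assertions are classical and standard references such as \cite{DelfourZolesio} carry out the argument in detail, I would simply cite them rather than redoing the computation.
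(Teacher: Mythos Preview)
Your proposal is correct and aligns with the paper's treatment: the paper does not give an independent proof of this proposition at all, but simply states that the proofs can be found in \cite[Theorems~5.1 and 5.7]{DelfourZolesio}, which is exactly the reference you invoke at the end. Your sketch of the tubular-neighborhood argument is the standard one underlying those theorems, so there is no substantive difference in approach.
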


Let us now introduced the penalized optimal control problem. We fix $\Omega$, $\Gamma$, and $k$ satisfying assumptions \HypoOmega{}, \ref{HypoOCP-k-Bound}, and \ref{HypoOCP-k-Lip}, and we extend $k$ to a continuous function defined on $\mathbbm R_+ \times \mathbbm R^d$ and taking values in $[K_{\min}, K_{\max}]$ such that, for every $t \in \mathbbm R_+$, $x \mapsto k(t, x)$ is $L$-Lipschitz continuous on $\mathbbm R^d$, with $L$ independent of $t$. Such an extension can be constructed, for instance, by proceeding as in \cite{Banach-Lip-Extend}.

For $\epsilon > 0$, we define $k_{\epsilon}: \mathbbm R_+ \times \mathbbm R^d\to \mathbbm R_+$ by
\begin{equation}
\label{eq:defi-k-epsilon}
k_{\epsilon}(t,x)=k(t,x)\left(1-\frac{1}{\epsilon}d_{\Omega}(x)\right)_+,
\end{equation}
where the notation $a_+$ is defined by $a_+=\max(0,a)$ for $a\in \mathbbm R$. In particular, $k_\epsilon$ is continuous, nonnegative, upper bounded by $K_{\max}$, $x \mapsto k_\epsilon(t, x)$ is Lipschitz continuous for every $t \in \mathbbm R_+$ with a Lipschitz constant $L_\epsilon = L + \frac{K_{\max}}{\epsilon}$ independent of $t$, $k_\epsilon(t, x) = k(t, x)$ for every $(t, x) \in \mathbbm R_+ \times \bar\Omega$, and $k_\epsilon(t, x) = 0$ if $d_\Omega(x) \geq \epsilon$.

We consider the penalized control system
\begin{equation}
\label{eq:penalized-control-system}
\Dot{\gamma}(t)=k_{\epsilon}(t,\gamma(t))u(t), \qquad t \geq 0,
\end{equation}
where $\gamma(t) \in \mathbbm R^d$ is the state (which is no longer constrained to remain in $\bar\Omega$) and $u(t) \in \bar B$ is the control. We denote by $\OCP_\epsilon(k_\epsilon)$ the minimal-time optimal control problem of finding, for any $(t_0, x_0) \in \mathbbm R_+ \times \mathbbm R^d$, a trajectory $\gamma \in \Lip(\mathbbm R^d)$ with $\gamma(t_0) = x_0$ and a measurable control $u: \mathbbm R_+ \to \bar B$ satisfying \eqref{eq:penalized-control-system} and minimizing the time at which $\gamma$ reaches the target set $\Gamma$ for the first time after $t_0$. Similarly to Definition~\ref{def:OCP}, we denote by $\Adm_\epsilon(k_\epsilon)$ the set of admissible curves for \eqref{eq:penalized-control-system}, i.e., the set of $\gamma \in \Lip(\mathbbm R^d)$ such that \eqref{eq:penalized-control-system} is satisfied for some measurable $u: \mathbbm R_+ \to \bar B$. The definition of the first exit time $\tau(t_0, \gamma)$ from Definition~\ref{def:OCP}\ref{def:OCP-tau} is extended to curves $\gamma \in \Lip(\mathbbm R^d)$, and we define optimal trajectories for $\OCP_\epsilon(k_\epsilon)$ as in Definition~\ref{def:OCP}\ref{item:def:optimal-trajectory}, but requiring in addition that $\tau(t_0, \gamma) < +\infty$ for a trajectory $\gamma$ to be considered optimal from $(t_0, x_0) \in \mathbbm R_+ \times \mathbbm R^d$. The set of optimal trajectories for $\OCP_\epsilon(k_\epsilon)$ from $(t_0, x_0) \in \mathbbm R_+ \times \mathbbm R^d$ is denoted by $\Opt_\epsilon(k_\epsilon, t_0, x_0)$. The value function $\varphi_\epsilon: \mathbbm R_+ \times \mathbbm R^d \to \mathbbm R_+ \cup \{+\infty\}$ of $\OCP_\epsilon(k_\epsilon)$ is defined similarly to Definition~\ref{def:OCP}\ref{def:value-function}.

Standard arguments allow to show that $\OCP_\epsilon(k_\epsilon)$ also satisfies a dynamic programming principle: if $(t_0, x_0) \in \mathbbm R_+ \times \mathbbm R^d$ and $\gamma \in \Adm_\epsilon(k_\epsilon)$ is such that $\gamma(t_0) = x_0$, then, for every $h \geq 0$, we have
\begin{equation}
\label{eq:DPP-epsilon}
\varphi_\epsilon(t_0 + h, \gamma(t_0 + h)) + h \geq \varphi_\epsilon(t_0, x_0),
\end{equation}
with equality if $\gamma \in \Opt_\epsilon(k_\epsilon, t_0, x_0)$ and $h \in [0, \varphi_\epsilon(t_0, x_0)]$. However, contrarily to $\OCP(k)$, we may have nonexistence of optimal trajectories for $\OCP_\epsilon(k_\epsilon)$. More precisely, $\Opt_\epsilon(k_\epsilon,\allowbreak t_0,\allowbreak x_0) = \varnothing$ if $d_\Omega(x_0) \geq \epsilon$, but one can show by standard arguments that this set is nonempty as soon as $d_\Omega(x_0) < \epsilon$. In addition, $\varphi_\epsilon(t, x) < +\infty$ if and only if $d_\Omega(x) < \epsilon$.

The first result we show for $\OCP_\epsilon(k_\epsilon)$ is the following property of optimal trajectories.

\begin{proposition}
\label{prop:opt-epsilon-remains-close}
Consider the optimal control problem $\OCP_\epsilon(k_{\epsilon})$ under assumptions \HypoOmega{}, \ref{HypoOCP-k-Bound}, and \ref{HypoOCP-k-Lip}. Let $(t_0,x_0)\in \mathbbm R_+\times \mathbbm R^d$ and $\gamma \in \Opt_\epsilon(k_{\epsilon},t_0,x_0)$. Then $d_{\Omega}(\gamma(t)) < \epsilon$ for every $t \in \mathbbm R_+$.
\end{proposition}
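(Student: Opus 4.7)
The proof hinges on the fact that $k_\epsilon$ vanishes linearly as $d_\Omega$ approaches $\epsilon$, so that a Grönwall-type inequality on $d_\Omega \circ \gamma$ prevents any admissible trajectory from reaching the set $\{d_\Omega \geq \epsilon\}$ in finite time.

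First I would dispose of the easy portions of $\gamma$. Since $\gamma \in \Opt_\epsilon(k_\epsilon, t_0, x_0)$ one has $\varphi_\epsilon(t_0, x_0) < +\infty$, and the discussion preceding the statement records that this forces $d_\Omega(x_0) < \epsilon$. By the constancy convention in Definition~\ref{def:OCP}\ref{item:def:optimal-trajectory}, $\gamma(t) = x_0$ for $t \in [0, t_0]$, so $d_\Omega(\gamma(t)) = d_\Omega(x_0) < \epsilon$ there; similarly $\gamma(t) \in \Gamma \subset \bar\Omega$ for $t \geq t_0 + \tau(t_0, \gamma)$, so $d_\Omega(\gamma(t)) = 0 < \epsilon$ on that tail. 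It thus remains to handle $t \in [t_0, t_0 + \tau(t_0, \gamma)]$.

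Next I would set $\phi(t) = d_\Omega(\gamma(t))$. Since $d_\Omega$ is $1$-Lipschitz and $\gamma$ is Lipschitz, $\phi$ is absolutely continuous with
\[
\abs{\dot\phi(t)} \leq \abs{\dot\gamma(t)} = k_\epsilon(t, \gamma(t)) \abs{u(t)} \leq k(t, \gamma(t)) \left(1 - \frac{\phi(t)}{\epsilon}\right)_+
\]
for almost every $t \geq t_0$, where $u$ denotes the optimal control associated with $\gamma$. Introducing $v(t) = \epsilon - \phi(t)$, on any subinterval where $v > 0$ this rewrites as
\[
\dot v(t) \geq -\frac{k(t, \gamma(t))}{\epsilon} v(t) \geq -\frac{K_{\max}}{\epsilon} v(t).
\]

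Finally I would close the argument by Grönwall. Suppose, for contradiction, that $t^\ast = \inf\{t \geq t_0 : v(t) \leq 0\}$ is finite; by continuity $v(t^\ast) = 0$ while $v > 0$ on $[t_0, t^\ast)$. Integrating the differential inequality on $[t_0, t^\ast)$ gives
\[
v(t) \geq v(t_0) \exp\!\left(-\frac{K_{\max}}{\epsilon} (t - t_0)\right)
\]
on that interval; letting $t \to t^\ast$ yields $0 = v(t^\ast) \geq (\epsilon - d_\Omega(x_0)) \exp(-K_{\max}(t^\ast-t_0)/\epsilon) > 0$, a contradiction. Hence $v(t) > 0$, i.e., $d_\Omega(\gamma(t)) < \epsilon$, for all $t \geq t_0$, and combined with the first paragraph this yields the conclusion. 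The only technical point worth care is justifying the a.e.\ chain-rule bound $\abs{\dot\phi(t)} \leq \abs{\dot\gamma(t)}$ for $\phi = d_\Omega \circ \gamma$ despite $d_\Omega$ being only Lipschitz rather than differentiable, which follows from absolute continuity of the composition and the pointwise $1$-Lipschitz bound on $d_\Omega$.
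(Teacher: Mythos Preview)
Your proof is correct and complete, but it takes a different route from the paper. The paper argues by ODE uniqueness: if $d_\Omega(\gamma(t_1)) \geq \epsilon$ for some $t_1$, then $k_\epsilon(\cdot, \gamma(t_1)) \equiv 0$, so the constant curve $\tilde\gamma \equiv \gamma(t_1)$ solves $\dot{\tilde\gamma}(t) = k_\epsilon(t, \tilde\gamma(t)) u(t)$ with the same control and the same value at $t_1$; Lipschitz regularity of $k_\epsilon$ in $x$ forces $\gamma \equiv \tilde\gamma$, contradicting $\gamma(t_0 + \tau(t_0, \gamma)) \in \Gamma \subset \bar\Omega$.

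Your approach instead tracks $\phi = d_\Omega \circ \gamma$ directly and uses that $k_\epsilon$ vanishes linearly as $d_\Omega \to \epsilon^-$ to obtain a Grönwall barrier preventing $\phi$ from reaching $\epsilon$ in finite time. This is somewhat longer but has the mild advantage of not invoking uniqueness of solutions to the control ODE; it uses only the admissibility bound $\abs{\dot\gamma} \leq k_\epsilon(\cdot, \gamma)$ and the explicit form of $k_\epsilon$. The paper's argument is shorter and exploits the global Lipschitz structure of $k_\epsilon$ more efficiently, while yours is more self-contained and quantitative.
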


\begin{proof}
Denote by $u: \mathbbm R_+ \to \bar B$ an optimal control associated with $\gamma$. Assume, to obtain a contradiction, that there exists $t_1 \in \mathbbm R_+$ such that $d_{\Omega}(\gamma(t_1)) \ge \epsilon$. In particular, $k_{\epsilon}(t, \gamma(t_1)) = 0$ for every $t\in \mathbbm R_+$, and thus the function $\Tilde{\gamma}: \mathbbm R_+ \to \mathbbm R^d$, defined by $\Tilde{\gamma}(t) = \gamma(t_1)$ satisfies $\dot{\tilde\gamma}(t) = k_\epsilon(t, \tilde\gamma(t)) u(t)$ for every $t\in \mathbbm R_+$. Since the initial value problem consisting of this equation and the initial condition $\tilde\gamma(t_1) = \gamma(t_1)$ admits a unique solution, then necessarily $\Tilde{\gamma} = \gamma$. However, since $\gamma \in \Opt_\epsilon(k_{\epsilon},t_0,x_0)$, we have $\tau(t_0, \gamma) < +\infty$ and $\gamma(t_0 + \tau(t_0, \gamma)) \in \Gamma \subset \bar\Omega$, which contradicts the fact that $d_\Omega(\gamma(t_0 + \tau(t_0, \gamma))) = d_\Omega(\tilde\gamma(t_0 + \tau(t_0, \gamma)))= d_\Omega(\gamma(t_1)) \geq \epsilon$.
\end{proof}

We now apply Pontryagin Maximum Principle to optimal trajectories of the penalized problem $\OCP_\epsilon(k_\epsilon)$ starting from a point in $\mathbbm R^d \setminus \Gamma$.

\begin{proposition}\label{Conse Pontryagin}
Consider the optimal control problem $\OCP_\epsilon(k_{\epsilon})$ under assumptions \HypoOmega{}, \ref{HypoOCP-k-Bound}, and \ref{HypoOCP-k-Lip}, and with $k_\epsilon$ defined from $k$ as in \eqref{eq:defi-k-epsilon}. Let $(t_0,x_0)\in \mathbbm R_+\times (\mathbbm R^d \setminus \Gamma)$, $\gamma \in \Opt_\epsilon(k_{\epsilon}, t_0, x_0)$, $T = \varphi_{\epsilon}(t_0,x_0)$, and $u: \mathbbm R_+ \to \bar{B}$ be an optimal control associated with $\gamma$. Then there exist $\lambda \geq 0$ and an absolutely continuous function $p: [t_0, t_0+T] \to \mathbbm R^d$ such that the following assertions hold.
\begin{enumerate}
\item\label{PMP:nontrivial} $\lambda + \max_{t\in [t_0,t_0+T]} \abs*{p(t)} > 0$.   

\item\label{PMP:costate-equation} For almost every $t \in [t_0,t_0+T]$, we have
\begin{equation}
\label{eq:PMP:costate}
\dot p(t) \in \conv\left\{\zeta \in \mathbbm R^d \suchthat (\zeta, p(t)) \in N_{G_\epsilon(t)}(\gamma(t), \dot\gamma(t))\right\},
\end{equation}
where, for $t \geq 0$, the set $G_\epsilon(t)$ is defined by
\begin{equation}
\label{eq:PMP:defi-G}
G_\epsilon(t) = \left\{(x, v) \in \mathbbm R^d \times \mathbbm R^d \suchthat \exists u \in \bar B \text{ such that } k_\epsilon(t, x) u = v\right\}.
\end{equation}

\item\label{PMP:p-normal} $-p(t_0 + T) \in N_{\Gamma}(\gamma(t_0 + T))$.

\item\label{PMP:optimal-control} One has
\begin{equation*}
u(t) = \frac{p(t)}{\abs*{p(t)}},
\end{equation*}
almost everywhere on $\{t\in [t_0,t_0 + T] \suchthat p(t) \neq 0\}$.  

\item\label{PMP:final-Hamiltonian} $\lambda = k_\epsilon(t_0 + T, \gamma(t_0 + T)) \abs*{p(t_0 + T)}$.
\end{enumerate}
\end{proposition}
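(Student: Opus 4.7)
The plan is to apply a nonsmooth version of Pontryagin's Maximum Principle for differential inclusions, since $k_\epsilon$ is only Lipschitz in the state variable and so the classical smooth PMP does not apply. The natural reference is the PMP from Vinter's monograph on optimal control, which is already invoked in the paper for the limiting normal cone conventions of Definition~\ref{defi:normal-cones}. The first step is to recast $\OCP_\epsilon(k_\epsilon)$ as a free-final-time Mayer problem whose cost is $T - t_0$ and whose dynamics are the differential inclusion $\dot\gamma(t) \in F_\epsilon(t, \gamma(t))$, with $F_\epsilon(t, x) := k_\epsilon(t, x) \bar B$. The graph of $F_\epsilon(t, \cdot)$ in $\mathbbm R^d \times \mathbbm R^d$ is, by construction, precisely the set $G_\epsilon(t)$ from \eqref{eq:PMP:defi-G}. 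Hypotheses \ref{HypoOCP-k-Bound}--\ref{HypoOCP-k-Lip} and the explicit form \eqref{eq:defi-k-epsilon} guarantee that $F_\epsilon$ has nonempty, compact, convex values, is continuous in $(t,x)$, and Lipschitz in $x$ with constant $L_\epsilon$, so the standing hypotheses of the nonsmooth PMP are met. The assumption $x_0 \notin \Gamma$ is used here to guarantee $T = \varphi_\epsilon(t_0, x_0) > 0$, so that the free-time PMP is nondegenerate.

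Applying the PMP — after, if convenient, reducing to a fixed time interval by the substitution $s = (t - t_0)/T$ and promoting $T$ to an additional state with $\dot T = 0$ — produces a multiplier $\lambda \geq 0$ and an absolutely continuous arc $p:[t_0, t_0 + T] \to \mathbbm R^d$ such that the nontriviality (i) and the Hamiltonian inclusion $(\dot p(t), -p(t)) \in \operatorname{co} N_{G_\epsilon(t)}(\gamma(t), \dot\gamma(t))$ hold almost everywhere. Reading off the first component yields (ii) exactly as stated. The transversality at the free endpoint $t_0 + T$ reads $-p(t_0 + T) \in \lambda \partial_x \ell(\gamma(t_0 + T)) + N_\Gamma(\gamma(t_0 + T))$ with Mayer cost $\ell = T - t_0$ independent of the endpoint position, so that the $\lambda$-term drops and (iii) follows. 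The Weierstrass maximization condition asserts that, almost everywhere, $u(t)$ maximizes $v \mapsto p(t) \cdot v$ over $F_\epsilon(t, \gamma(t)) = k_\epsilon(t, \gamma(t))\bar B$; this gives $u(t) = p(t)/\abs*{p(t)}$ wherever $p(t) \neq 0$, which is (iv), and it identifies the Hamiltonian along the trajectory as $H(t, \gamma(t), p(t)) = k_\epsilon(t, \gamma(t)) \abs*{p(t)}$. The standard boundary condition on the Hamiltonian for free-final-time Mayer problems, $H(t_0 + T, \gamma(t_0 + T), p(t_0 + T)) = \lambda \, \partial_T (T - t_0) = \lambda$, then yields (v).

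The main obstacle I anticipate is bookkeeping rather than substance: one must select a version of the PMP whose sign conventions, convexification, and form of the adjoint inclusion match exactly with the statement, in particular that the partial convexification appears on the $\zeta$-slot while the $p(t)$-slot remains ``frozen'' inside the limiting normal cone to $G_\epsilon(t)$. Once the cited theorem is fixed (a Vinter-type PMP for differential inclusions with free final time, measurable-continuous right-hand side, and closed endpoint constraint), and once the mild verification of its hypotheses for $F_\epsilon$ is carried out, each of the five assertions falls out immediately; no separate computations are required.
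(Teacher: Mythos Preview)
Your proposal is correct and follows essentially the same route as the paper: both apply Vinter's nonsmooth Pontryagin Maximum Principle for differential inclusions (the paper cites \cite[Theorem~8.4.1]{Vinter2010Optimal}) to the free-final-time Mayer reformulation with $F_\epsilon(t,x)=k_\epsilon(t,x)\bar B$, verify its hypotheses from \ref{HypoOCP-k-Bound}--\ref{HypoOCP-k-Lip}, and read off the five conclusions directly. The only places where the paper adds detail beyond your sketch are the explicit computation of the limiting normal cone to the endpoint constraint set (to extract \ref{PMP:p-normal} and the identity $\eta=\lambda$) and the use of Proposition~\ref{prop:opt-epsilon-remains-close} to ensure $k_\epsilon(t,\gamma(t))>0$ when deriving \ref{PMP:optimal-control}; both are routine and consistent with what you outline.
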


Before turning to the proof of Proposition~\ref{Conse Pontryagin}, we remark that, by taking $\gamma \in \Opt_\epsilon(k_\epsilon,\allowbreak t_0,\allowbreak x_0)$, we implicitly assume that $\Opt_\epsilon(k_\epsilon,\allowbreak t_0,\allowbreak x_0)$ is not empty, and thus $d_{\Omega}(x_0) < \epsilon$.

\begin{proof}
We apply \cite[Theorem~8.4.1]{Vinter2010Optimal} to $\OCP_\epsilon(k_\epsilon)$ and the optimal trajectory $\gamma$ over the interval $[t_0, t_0 + T]$. Notice, first, that, since $\gamma \in \Opt_\epsilon(k_\epsilon,\allowbreak t_0,\allowbreak x_0)$, we have in particular $T < +\infty$ and, since $x_0 \in \mathbbm R^d \setminus \Gamma$, we have $T > 0$. We denote by $L_\epsilon$ the Lipschitz constant of $x \mapsto k_\epsilon(t, x)$, and we recall that this constant is independent of $t \in \mathbbm R_+$.

We first introduce some notation in accordance to the statement of \cite[Theorem~8.4.1]{Vinter2010Optimal}. Let $g: \mathbbm R \times \mathbbm R^d \times \mathbbm R \times \mathbbm R^d \to \mathbbm R$ be defined for $(t_1, x_1, t_2, x_2) \in \mathbbm R \times \mathbbm R^d \times \mathbbm R \times \mathbbm R^d$ by $g(t_1, x_1, t_2, x_2) = t_2 - t_1$, define $C = \{(t_1, x_1, t_2, x_2) \in \mathbbm R \times \mathbbm R^d \times \mathbbm R \times \mathbbm R^d \suchthat t_1 = t_0,\, x_1 = x_0,\, t_2 \geq t_1,\, x_2 \in \Gamma\}$, and let $F_\epsilon: \mathbbm R \times \mathbbm R^d \rightrightarrows \mathbbm R^d$ be the set-valued map defined for $(t, x) \in \mathbbm R \times \mathbbm R^d$ by $F_\epsilon(t, x) = \{k_{\epsilon}(\max(t, 0), x) u \suchthat u \in \bar B\}$. Notice that the optimal control problem $\OCP_\epsilon(k_\epsilon)$ starting from the given $(t_0, x_0) \in \mathbbm R_+ \times \mathbbm R^d$ can be rephrased as: minimize $g(t_0, \tilde\gamma(t_0), t_0 + T, \tilde\gamma(t_0 + T))$ over intervals $[t_0, t_0 + T]$ and absolutely continuous functions $\tilde\gamma: [t_0, t_0 + T] \to \mathbbm R^d$ such that $\dot{\tilde\gamma}(t) \in F_\epsilon(t, \tilde\gamma(t))$ for a.e.\ $t \in [t_0, t_0 + T]$ and $(t_0, \tilde\gamma(t_0), t_0 + T, \tilde\gamma(t_0 + T)) \in C$. Remark also that the set $G_\epsilon(t)$ from \ref{PMP:costate-equation} is the graph of $\mathbbm R^d \ni x \mapsto F_\epsilon(t, x) \subset \mathbbm R^d$.

Let us verify the assumptions of \cite[Theorem~8.4.1]{Vinter2010Optimal}. First, the constant $\delta > 0$ from \cite[Theorem~8.4.1]{Vinter2010Optimal} can be chosen arbitrarily. Since $g$ is of class $\mathbf C^{\infty}$ and $\Gamma$ is closed, assumption (H1) from \cite[Theorem~8.4.1]{Vinter2010Optimal} is verified. The fact that $k_\epsilon$ is continuous also allows one to easily verify assumption (H2) from \cite[Theorem~8.4.1]{Vinter2010Optimal}. Using the fact that $k_\epsilon$ is $L_\epsilon$-Lipschitz continuous in its second variable uniformly with respect to the first variable, assumption (H3) from \cite[Theorem~8.4.1]{Vinter2010Optimal} is satisfied with $\beta = 0$ and $k_{F_\epsilon}(t) = L_\epsilon$ for every $t \in [t_0, t_0 + T]$. Since $t_0$ is fixed, assumption (H4) from \cite[Theorem~8.4.1]{Vinter2010Optimal} is not needed. Finally, using once again that $k_\epsilon$ is $L_\epsilon$-Lipschitz continuous in its second variable uniformly with respect to the first variable, we deduce that (H5) from \cite[Theorem~8.4.1]{Vinter2010Optimal} holds with $c_1 = K_{\max}$, $k_1 = L_\epsilon$, and with an arbitrary $\delta_1 > 0$. Thus, all assumptions of \cite[Theorem~8.4.1]{Vinter2010Optimal} are verified.

The statement of \cite[Theorem~8.4.1]{Vinter2010Optimal} now asserts the existence of an absolutely continuous $p: [t_0, t_0+T] \to \mathbbm R^d$ and real numbers $\lambda \geq 0$, $\xi$, and $\eta$, satisfying (i)--(iv) and (vi) from the statement of \cite[Theorem~8.4.1]{Vinter2010Optimal}. Items (i) and (ii) are exactly \ref{PMP:nontrivial} and \ref{PMP:costate-equation} above.

Assertion \cite[Theorem~8.4.1(iii)]{Vinter2010Optimal} states that
\begin{equation}
\label{eq:Vinter-iii}
(-\xi, p(t_0), \eta, -p(t_0 + T)) \in \lambda \partial g(t_0, x_0, t_0 + T, \gamma(t_0 + T)) + N_C(t_0, x_0, t_0 + T, \gamma(t_0 + T)),
\end{equation}
where $\partial g$ denotes the limiting subdifferential of $g$ (see \cite[Definition~4.3.1]{Vinter2010Optimal} for its definition). Since $g$ is smooth, we have $\partial g(t_0, x_0, t_0 + T, \gamma(t_0 + T)) = \{\nabla g(t_0, x_0, t_0 + T, \gamma(t_0 + T))\} = \{(-1, 0, 1, 0)\}$.

Let us now compute the limiting normal cone of $C$ at $(t_0, x_0, t_0 + T, \gamma(t_0 + T))$. For that purpose, we first compute its proximal normal cone at every $(t_1, x_1, t_2, x_2) \in C$ with $t_2 > t_1$. Let $(s_1, p_1, s_2, p_2) \in N_C^{\mathrm P}(t_1, x_1, t_2, x_2)$. By Definition~\ref{defi:normal-cones}\ref{defi:proximal-normal-cone}, there exists $M > 0$ such that
\begin{multline*}
(s_1, p_1, s_2, p_2) \cdot (t_1^\prime - t_1, x_1^\prime - x_1, t_2^\prime - t_2, x_2^\prime - x_2) \\ \leq M \left[\abs{t_1^\prime - t_1}^2 + \abs{x_1^\prime - x_1}^2 + \abs{t_2^\prime - t_2}^2 + \abs{x_2^\prime - x_2}^2\right]
\end{multline*}
for every $(t_1^\prime, x_1^\prime, t_2^\prime, x_2^\prime) \in C$. By definition of $C$, we have $t_1 = t_1^\prime = t_0$, $x_1 = x_1^\prime = x_0$, and thus
\begin{equation*}
s_2 (t_2^\prime - t_2) + p_2 \cdot (x_2^\prime - x_2) \leq M \left[\abs{t_2^\prime - t_2}^2 + \abs{x_2^\prime - x_2}^2\right].
\end{equation*}
Taking $t_2^\prime = t_2$, we deduce that $p_2 \cdot (x_2^\prime - x_2) \leq M \abs{x_2^\prime - x_2}^2$ for every $x_2^\prime \in \Gamma$, and thus $p_2 \in N_\Gamma^{\mathrm P}(x_2)$. Taking $x_2^\prime = x_2$, we deduce that $s_2 (t_2^\prime - t_2) \leq M \abs{t_2^\prime - t_2}^2$ for every $t_2^\prime \geq t_1^\prime = t_0$. In particular, for every $\rho > 0$, taking $t_2^\prime = t_2 + \rho$, we have $s_2 \rho \leq M \rho^2$, which yields, since $\rho > 0$ is arbitrary, that $s_2 \leq 0$. On the other hand, taking $t_2^\prime = t_2 - \rho$ for $\rho \in (0, t_2 - t_1]$, we have $- s_2 \rho \leq M \rho^2$, which yields, since $\rho \in (0, t_2 - t_1]$ is arbitrary and $t_2 > t_1$, that $- s_2 \leq 0$, and thus $s_2 = 0$. We have thus shown that\footnote{The converse inclusion can be shown by straightforward arguments, but it is not necessary for our proof. Notice also that this inclusion is false in the case $t_2 = t_1$, but this case is not needed in our proof.}
\[
N_C^{\mathrm P}(t_1, x_1, t_2, x_2) \subset \mathbbm R \times \mathbbm R^d \times \{0\} \times N_\Gamma^{\mathrm P}(x_2),
\]
It now follows, using Definition~\ref{defi:normal-cones}\ref{defi:limiting-normal-cone}, that\footnote{Once again, we also have the converse inclusion, but it is not necessary for our proof.}
\[
N_C(t_0, x_0, t_0 + T, \gamma(t_0 + T)) \subset \mathbbm R \times \mathbbm R^d \times \{0\} \times N_\Gamma(\gamma(t_0 + T)).
\]
In particular, \eqref{eq:Vinter-iii} imposes no constraints on $\xi$ and $p(t_0)$, and asserts that $\eta = \lambda$ and $-p(t_0 + T) \in N_\Gamma(\gamma(t_0 + T))$, showing \ref{PMP:p-normal}.

To show \ref{PMP:optimal-control}, notice that \cite[Theorem~8.4.1(iv)]{Vinter2010Optimal} states that, for almost every $t \in [t_0, t_0 + T]$, we have $p(t) \cdot \dot\gamma(t) \geq p(t) \cdot v$ for every $v \in F_\epsilon(t, \gamma(t))$, which means that, for almost every $t \in [t_0, t_0 + T]$, we have  $k_\epsilon(t, \gamma(t)) p(t)\cdot u(t) \geq k_\epsilon(t, \gamma(t)) p(t) \cdot w$ for every $w \in \bar B$. By Proposition~\ref{prop:opt-epsilon-remains-close}, we have that $d_\Omega(\gamma(t)) < \epsilon$ for every $t \in [t_0, t_0 + T]$, yielding from \eqref{eq:defi-k-epsilon} that $k_\epsilon(t, \gamma(t)) > 0$. Hence, for almost every $t \in [t_0, t_0 + T]$, we have $p(t) \cdot u(t) \geq p(t) \cdot w$ for every $w \in \bar B$, and this is equivalent to \ref{PMP:optimal-control}.

Finally, define $H_\epsilon: \mathbbm R \times \mathbbm R^d \times \mathbbm R^d \to \mathbbm R$ for $(t, x, p) \in \mathbbm R \times \mathbbm R^d \times \mathbbm R^d$ by $H_\epsilon(t, x, p) = \sup_{v \in F_\epsilon(t, x)} p \cdot v$, and notice that $H_\epsilon(t, x, p) = k_\epsilon(t, x) \abs{p}$. In particular, remarking that $H_\epsilon$ is continuous, \cite[Theorem~8.4.1(vi)]{Vinter2010Optimal} then asserts that $\eta = k_\epsilon(t_0 + T, \gamma(t_0 + T)) \abs{p(t_0 + T)}$, and thus \ref{PMP:final-Hamiltonian} holds since $\eta = \lambda$.
\end{proof}

The absolutely continuous function $p$ from Proposition~\ref{Conse Pontryagin}\ref{PMP:costate-equation} is known as the \emph{costate} associated with the optimal trajectory $\gamma$. Even though Proposition~\ref{Conse Pontryagin}\ref{PMP:costate-equation} provides the differential inclusion \eqref{eq:PMP:costate} for the costate $p$, this inclusion is in general hard to manipulate. We provide, in the next lemma, the main consequence of this differential inclusion that we will use in the sequel. Recall that, for $r > 0$, $\bar B_r$ denotes the closed ball centered at the origin and with radius $r$.

\begin{lemma}
\label{lemm:pre-Gronwall-for-p}
Consider the optimal control problem $\OCP_\epsilon(k_{\epsilon})$ under assumptions \HypoOmega{}, \ref{HypoOCP-k-Bound}, and \ref{HypoOCP-k-Lip}, and with $k_\epsilon$ defined from $k$ as in \eqref{eq:defi-k-epsilon}. Let $(t_0,x_0)$, $\gamma$, $T$, and $p$ be as in the statement of Proposition~\ref{Conse Pontryagin} and $L_\epsilon > 0$ be the Lipschitz constant of $x \mapsto k_\epsilon(t, x)$, which is independent of $t$. Then, for almost every $t \in [t_0, t_0 + T]$, we have
\[\conv\left\{\zeta \in \mathbbm R^d \suchthat (\zeta, p(t)) \in N_{G_\epsilon(t)}(\gamma(t), \dot\gamma(t))\right\} \subset \bar B_{L_\epsilon \abs{p(t)}}.\]
In particular, for almost every $t \in [t_0, t_0 + T]$, we have $\abs{\dot p(t)} \leq L_\epsilon \abs{p(t)}$.
\end{lemma}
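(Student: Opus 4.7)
The plan is to establish the pointwise bound $\abs{\zeta} \le L_\epsilon \abs{p}$ for every $(\zeta, p) \in N^{\mathrm P}_{G_\epsilon(t)}(\gamma(t), \dot\gamma(t))$, pass this bound to the limiting normal cone by the definition of $N_{G_\epsilon(t)}$ as a closure of proximal normals, and conclude using the fact that the closed ball $\bar B_{L_\epsilon \abs{p(t)}}$ is convex. The ``in particular'' clause is then immediate from Proposition~\ref{Conse Pontryagin}\ref{PMP:costate-equation}.

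Setting $(x_0, v_0) = (\gamma(t), \dot\gamma(t))$, observe that $G_\epsilon(t) = \{(x, v) \in \mathbbm R^d \times \mathbbm R^d : \abs{v} \le k_\epsilon(t, x)\}$ and that $k_\epsilon(t, x_0) > 0$ by Proposition~\ref{prop:opt-epsilon-remains-close} and the definition of $k_\epsilon$. If $\abs{v_0} < k_\epsilon(t, x_0)$ the point is interior to $G_\epsilon(t)$ and $N^{\mathrm P}_{G_\epsilon(t)}(x_0, v_0) = \{0\}$, so the bound is trivial. In the remaining case $\abs{v_0} = k_\epsilon(t, x_0) > 0$, fix $(\zeta, p) \in N^{\mathrm P}_{G_\epsilon(t)}(x_0, v_0)$ and a constant $M > 0$ such that
\[
\zeta \cdot (x - x_0) + p \cdot (v - v_0) \le M\bigl(\abs{x - x_0}^2 + \abs{v - v_0}^2\bigr)
\]
for every $(x, v) \in G_\epsilon(t)$.

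My first sub-step is to show $p = \abs{p}\, v_0/\abs{v_0}$. Testing the above inequality with $x = x_0$ and $v = \theta v_0$, $\theta \in [0, 1]$, and letting $\theta \to 1^-$ yields $p \cdot v_0 \ge 0$. Testing instead with $x = x_0$ and $v(\theta) = v_0 \cos\theta + \abs{v_0} w \sin\theta$ for $w \perp v_0$ (which stays on the sphere $\{\abs{v} = \abs{v_0}\}$, hence in $G_\epsilon(t)$), and exploiting that $\abs{v(\theta) - v_0}^2 = O(\theta^2)$ while $p \cdot (v(\theta) - v_0) = \abs{v_0}(p \cdot w)\theta + O(\theta^2)$, forces $p \cdot w = 0$ for every $w \perp v_0$. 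My second sub-step is to fix $\hat h \in \mathbbm S^{d-1}$, take $x = x_0 + r \hat h$ for small $r > 0$ and $v = k_\epsilon(t, x)\, v_0/\abs{v_0} \in G_\epsilon(t)$, and use the alignment from the first sub-step to compute $p \cdot (v - v_0) = \abs{p}\bigl(k_\epsilon(t, x) - k_\epsilon(t, x_0)\bigr)$ together with $\abs{v - v_0} \le L_\epsilon r$. The proximal inequality then reduces to $r\, \zeta \cdot \hat h \le M(1 + L_\epsilon^2) r^2 + L_\epsilon \abs{p} r$, and dividing by $r$ and sending $r \to 0^+$ gives $\zeta \cdot \hat h \le L_\epsilon \abs{p}$; the arbitrariness of $\hat h$ concludes the pointwise bound.

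Finally, any $(\zeta, p) \in N_{G_\epsilon(t)}(x_0, v_0)$ is the limit of proximal normals $(\zeta_n, p_n) \in N^{\mathrm P}_{G_\epsilon(t)}(x_n, v_n)$ with $(x_n, v_n) \to (x_0, v_0)$ in $G_\epsilon(t)$; each satisfies $\abs{\zeta_n} \le L_\epsilon \abs{p_n}$ (by the argument above at boundary points, or trivially at interior points), and passing to the limit preserves the inequality. The convexity of $\bar B_{L_\epsilon \abs{p(t)}}$ then absorbs the convex hull. The main obstacle is the alignment step $p = \abs{p}\, v_0/\abs{v_0}$: with $k_\epsilon(t, \cdot)$ only Lipschitz, no defining-function gradient is available to read off the outward normal, so the orthogonality of $p$ to the tangent space of the $v$-slice at $v_0$ must be extracted by hand from the proximal inequality along tangential arcs on the sphere.
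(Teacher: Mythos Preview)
Your proof is correct and follows the same overall scaffold as the paper (proximal normals $\to$ limiting normals $\to$ convex hull, then invoke Proposition~\ref{Conse Pontryagin}\ref{PMP:costate-equation}), but the core estimate is obtained differently. You first extract the geometric fact that the $p$-component of any proximal normal at a boundary point of $G_\epsilon(t)$ is aligned with $v_0$, and then test with $x = x_0 + r\hat h$ in an \emph{arbitrary} direction $\hat h$, using the alignment to evaluate $p\cdot(v-v_0) = \abs{p}\bigl(k_\epsilon(t,x)-k_\epsilon(t,x_0)\bigr)$ exactly. The paper skips the alignment step entirely: it perturbs in the direction of $\zeta_n$ itself, taking $y = x_n + \alpha_n \zeta_n$ with the \emph{same} control $u_n$, so the spatial term becomes $\alpha_n\abs{\zeta_n}^2$ directly, and then bounds the cross term crudely via $\abs{p_n\cdot u_n}\le\abs{p_n}$ without ever needing to know where $p_n$ points. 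The paper also handles the possibly unbounded constant $M_n$ in the proximal inequality by choosing $\alpha_n$ depending on $M_n$ (so that $M_n\alpha_n\to 0$), whereas you send $r\to 0^+$ with $M$ fixed, which is fine since you work at a single proximal normal.

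What each approach buys: the paper's route is shorter and avoids any geometry of the $v$-slice; your route yields the extra structural information $p \parallel v_0$, which is not needed here but is of independent interest (and is in fact implicitly used later in the paper, see the computation $p_n\cdot u_n \to p(t)\cdot u(t)=\abs{p(t)}$ in the proof of Lemma~\ref{lemm:technical}). One small point worth making explicit in your write-up: when you pass to the limiting cone via approximating points $(x_n,v_n)\to(\gamma(t),\dot\gamma(t))$, your boundary-case argument at $(x_n,v_n)$ requires $k_\epsilon(t,x_n)>0$, which holds for $n$ large by continuity of $k_\epsilon$ and Proposition~\ref{prop:opt-epsilon-remains-close}.
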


\begin{proof}
Let $G_\epsilon$ be defined as in the statement of Proposition~\ref{Conse Pontryagin}\ref{PMP:costate-equation}. Since $\bar B_{L_\epsilon \abs{p(t)}}$ is convex, the result is proved if we show that, for a.e.\ $t \in [t_0, t_0 + T]$ and every $\zeta \in \mathbbm R^d$, if $(\zeta, p(t)) \in N_{G_\epsilon(t)}(\gamma(t), \dot\gamma(t))$, then $\abs{\zeta} \leq L_\epsilon \abs{p(t)}$. Note that this inequality is trivial if $\zeta = 0$, and thus we assume from now on that $\zeta \neq 0$.

Fix $t \in [t_0, t_0 + T]$ at which $p$ and $\gamma$ are differentiable and \eqref{eq:PMP:costate} holds. Let $\zeta \in \mathbbm R^d$ be such that $(\zeta, p(t)) \in N_{G_\epsilon(t)}(\gamma(t), \dot\gamma(t))$. By definition of the limiting normal cone of $G_\epsilon(t)$, there exist sequences $(x_n, v_n)_{n \in \mathbbm N}$ in $G_\epsilon(t)$ and $(\zeta_n, p_n)_{n \in \mathbbm N}$ in $\mathbbm R^d \times \mathbbm R^d$ such that $(x_n, v_n) \to (\gamma(t), \dot\gamma(t))$ and $(\zeta_n, p_n) \to (\zeta, p(t))$ as $n \to +\infty$ and $(\zeta_n, p_n) \in N_{G_\epsilon(t)}^{\mathrm P}(x_n, v_n)$ for every $n \in \mathbbm N$. Since $\zeta \neq 0$, we assume, with no loss of generality, that $\zeta_n \neq 0$ for every $n \in \mathbbm N$.

Take $n \in \mathbbm N$. Since $(x_n, v_n) \in G_\epsilon(t)$, it follows from \eqref{eq:PMP:defi-G} that there exists $u_n \in \bar B$ such that $v_n = k_\epsilon(t, x_n) u_n$. Since $(\zeta_n, p_n) \in N_{G_\epsilon(t)}^{\mathrm P}(x_n, v_n)$, there exists $M_n > 0$ such that
\begin{equation}
\label{eq:ineg-p-proximal-normal}
(\zeta_n, p_n) \cdot (y - x_n, k_\epsilon(t, y) u - k_\epsilon(t, x_n) u_n) \leq M_n \left[\abs*{y - x_n}^2 + \abs*{k_\epsilon(t, y) u - k_\epsilon(t, x_n) u_n}^2\right]
\end{equation}
for every $y \in \mathbbm R^d$ and $u \in \bar B$. Let $\alpha_n = ((n+1) (1 + L_\epsilon^2) M_n)^{-1} > 0$ and take $y = x_n + \alpha_n \zeta_n$ and $u = u_n$ in \eqref{eq:ineg-p-proximal-normal}. Then
\begin{multline*}
\alpha_n \abs*{\zeta_n}^2 + \left[k_\epsilon(t, x_n + \alpha_n \zeta_n) - k_\epsilon(t, x_n)\right] p_n \cdot u_n \\
\leq M_n \left[\alpha_n^2 \abs*{\zeta_n}^2 + \abs*{k_\epsilon(t, x_n + \alpha_n \zeta_n) - k_\epsilon(t, x_n)}^2\right].
\end{multline*}
Hence
\[
\alpha_n \abs*{\zeta_n}^2 \leq M_n \alpha_n^2 \abs{\zeta_n}^2 + M_n L_\epsilon^2 \alpha_n^2 \abs{\zeta_n}^2 + \alpha_n L_\epsilon \abs{\zeta_n} \abs{p_n}.
\]
Since $\alpha_n > 0$ and $\abs*{\zeta_n} > 0$, we deduce that
\[
\left(1 - M_n \alpha_n - M_n L_\epsilon^2 \alpha_n\right)\abs*{\zeta_n} \leq L_\epsilon \abs{p_n}.
\]
By definition of $\alpha_n$, we have $M_n \alpha_n + M_n L_\epsilon^2 \alpha_n = \frac{1}{n + 1} < 1$, and thus
\[
\abs*{\zeta_n} \leq \frac{L_\epsilon}{1 - \frac{1}{n+1}} \abs{p_n}.
\]
The conclusion follows by letting $n \to +\infty$.
\end{proof}

Using Lemma~\ref{lemm:pre-Gronwall-for-p}, we can prove additional regularity properties of optimal trajectories and optimal controls for $\OCP_\epsilon(k_\epsilon)$.

\begin{proposition}
\label{prop:consequences-of-Pontryagin-for-optimal-trajectories-of-ocp-epsilon}
Consider the optimal control problem $\OCP_\epsilon(k_{\epsilon})$ under assumptions \HypoOmega{}, \ref{HypoOCP-k-Bound}, and \ref{HypoOCP-k-Lip}, and with $k_\epsilon$ defined from $k$ as in \eqref{eq:defi-k-epsilon}. Let $(t_0,x_0)$, $\gamma$, $T$, $u$, and $p$ be as in the statement of Proposition~\ref{Conse Pontryagin} and $L_\epsilon > 0$ be the Lipschitz constant of $k_\epsilon$ with respect to its second variable, which is independent of its first variable. For $t \in [t_0, t_0 + T]$, define $\Xi_\epsilon(t)$ by
\begin{equation}
\label{eq:defi-Xi-epsilon}
\Xi_\epsilon(t) = \conv\left\{\xi \in \mathbbm R^d \suchthat (\xi \abs{p(t)}, p(t)) \in N_{G_\epsilon(t)}(\gamma(t), \dot\gamma(t))\right\},
\end{equation}
where $G_\epsilon(t)$ is given by \eqref{eq:PMP:defi-G}. Then, up to redefining $u$ in a set of Lebesgue measure zero, the following assertions hold.
\begin{enumerate}
\item\label{item:p-is-nonzero} For every $t \in [t_0, t_0 + T]$, we have $\abs{p(t)} \neq 0$ and $u(t) = \frac{p(t)}{\abs{p(t)}}$.
\item\label{item:Xi} For almost every $t \in [t_0, t_0 + T]$ and every $\xi \in \Xi_\epsilon(t)$, we have $\abs{\xi} \leq L_\epsilon$.
\item\label{item:system-on-gamma-u} We have $\gamma \in \mathbf{C}^{1}([t_0, t_0 + T]; \mathbbm R^d)$, $u \in \Lip_{L_\epsilon}([t_0, t_0 + T]; \mathbbm S^{d-1})$, and $(\gamma, u)$ satisfies, for almost every $t \in [t_0, t_0 + T]$, the system
\begin{equation}\label{ODE dot u}
    \left \{
    \begin{aligned}
      \Dot{\gamma}(t) & = k_{\epsilon}(t, \gamma(t)) u(t)  \\
      \Dot{u}(t) & \in \Proj_{u(t)}^{\perp} \Xi_\epsilon(t)
    \end{aligned}
		\right.
\end{equation}
where, for $x\in \mathbbm S^{d-1}$ and $A \subset \mathbbm R^d$, $\Proj_x^{\perp} A$ denotes the projection of the vectors of $A$ onto the tangent space of $\mathbbm S^{d-1}$ at $x$, defined by $\Proj_x^\perp A = \{a - (x \cdot a) x \suchthat a \in A\}$.
\end{enumerate}
\end{proposition}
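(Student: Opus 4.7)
The plan is to derive all three items from Pontryagin's maximum principle (Proposition~\ref{Conse Pontryagin}) together with the Grönwall-type estimate provided by Lemma~\ref{lemm:pre-Gronwall-for-p}. The decisive step is the nonvanishing of the costate $p$ on all of $[t_0, t_0+T]$; once that is established, the redefinition of $u$ and the system \eqref{ODE dot u} follow essentially by differentiating $u = p/\abs{p}$.

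\emph{Step 1 (part \ref{item:p-is-nonzero}).} From Lemma~\ref{lemm:pre-Gronwall-for-p} we have $\abs{\dot p(t)} \leq L_\epsilon \abs{p(t)}$ for almost every $t \in [t_0, t_0+T]$. Since $p$ is absolutely continuous, the function $t \mapsto \abs{p(t)}^2$ is absolutely continuous and satisfies $\frac{d}{dt}\abs{p(t)}^2 = 2 p(t) \cdot \dot p(t) \leq 2 L_\epsilon \abs{p(t)}^2$ almost everywhere, together with the analogous lower bound. A standard Grönwall argument then gives $\abs{p(t)}^2 \leq \abs{p(s)}^2 \exp(2 L_\epsilon \abs{t - s})$ for every $s, t \in [t_0, t_0+T]$. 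If $p(s) = 0$ for some $s$, then $p \equiv 0$ on $[t_0, t_0+T]$; but then Proposition~\ref{Conse Pontryagin}\ref{PMP:final-Hamiltonian} yields $\lambda = 0$, contradicting the nontriviality condition \ref{PMP:nontrivial}. Hence $\abs{p(t)} \neq 0$ for every $t \in [t_0, t_0+T]$, and Proposition~\ref{Conse Pontryagin}\ref{PMP:optimal-control} gives $u(t) = p(t)/\abs{p(t)}$ almost everywhere. Redefining $u$ on this null set yields the stated identity at every $t$.

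\emph{Step 2 (part \ref{item:Xi}).} The map $\zeta \mapsto \zeta/\abs{p(t)}$ is a bijective scaling of $\mathbbm R^d$ and therefore commutes with taking convex hulls. Hence
\[
\Xi_\epsilon(t) = \frac{1}{\abs{p(t)}}\conv\left\{\zeta \in \mathbbm R^d \suchthat (\zeta, p(t)) \in N_{G_\epsilon(t)}(\gamma(t), \dot\gamma(t))\right\},
\]
and Lemma~\ref{lemm:pre-Gronwall-for-p} bounds the set on the right by $\bar B_{L_\epsilon \abs{p(t)}}$, so that $\Xi_\epsilon(t) \subset \bar B_{L_\epsilon}$ for almost every $t$.

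\emph{Step 3 (part \ref{item:system-on-gamma-u}).} After Step 1, $u = p/\abs{p}$ is a continuous (in fact absolutely continuous) $\mathbbm S^{d-1}$-valued function on $[t_0, t_0+T]$. Since $k_\epsilon$ is continuous and $\dot\gamma(t) = k_\epsilon(t, \gamma(t)) u(t)$ almost everywhere, the identity extends by continuity to every $t$, whence $\gamma \in \mathbf C^1([t_0, t_0+T]; \mathbbm R^d)$. For the equation on $u$, we differentiate: at almost every $t$,
\[
\dot u(t) = \frac{\dot p(t)}{\abs{p(t)}} - \left(u(t) \cdot \frac{\dot p(t)}{\abs{p(t)}}\right) u(t).
\]
By Proposition~\ref{Conse Pontryagin}\ref{PMP:costate-equation}, the vector $\dot p(t)/\abs{p(t)}$ lies in $\Xi_\epsilon(t)$, so $\dot u(t) \in \Proj_{u(t)}^{\perp}\Xi_\epsilon(t)$, which is the second equation in \eqref{ODE dot u}. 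Finally, the orthogonal projection onto the tangent space of $\mathbbm S^{d-1}$ is $1$-Lipschitz, and part \ref{item:Xi} gives $\abs{\dot p(t)/\abs{p(t)}} \leq L_\epsilon$; hence $\abs{\dot u(t)} \leq L_\epsilon$ almost everywhere, and $u \in \Lip_{L_\epsilon}([t_0, t_0+T]; \mathbbm S^{d-1})$.

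The main obstacle in this plan is Step~1: establishing that $p$ does not vanish anywhere. Everything else is algebraic manipulation and a chain-rule computation, but the nonvanishing of $p$ is what allows us to invert $\abs{p}$, rescale the differential inclusion to define $\Xi_\epsilon$, and differentiate $u = p/\abs{p}$ classically, so it is the technical linchpin of the statement.
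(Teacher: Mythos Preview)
Your proof is correct and follows essentially the same approach as the paper's: both use Lemma~\ref{lemm:pre-Gronwall-for-p} and Gr\"onwall's inequality to show $p$ never vanishes (you work with $\abs{p}^2$, the paper with $\abs{p}$, a cosmetic difference), then rescale the costate inclusion to obtain $\Xi_\epsilon(t)\subset \bar B_{L_\epsilon}$, and finally differentiate $u=p/\abs{p}$ to derive \eqref{ODE dot u} and the Lipschitz bound on $u$. Your identification of the nonvanishing of $p$ as the key step matches the paper's structure exactly.
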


\begin{proof}
Let us first prove \ref{item:p-is-nonzero}. By Lemma~\ref{lemm:pre-Gronwall-for-p}, we have that $\abs{\dot p(t)} \leq \abs{p(t)} L_\epsilon$, implying that, for every $t_1, t_2\in [t_0, t_0 + T]$, we have
\[
\abs*{p(t_2)}\le \abs*{p(t_1)} + L_\epsilon \int_{\min\{t_1, t_2\}}^{\max\{t_1, t_2\}} \abs*{p(s)} \diff s.
\]
Hence, by Gr\"onwall's inequality, for every $t_1, t_2\in [t_0, t_0 + T]$, we have
\[
\abs*{p(t_2)} \le \abs*{p_{\epsilon}(t_1)} e^{L_\epsilon \abs*{t_2 - t_1}}.
\]
If there exists $t_1$ such that $p(t_1)=0$, then $p(t)=0$ for every $t \in [t_0,t_0+T_{\epsilon}]$. Letting $\lambda \geq 0$ be as in the statement of Proposition~\ref{Conse Pontryagin}, we have, by Proposition~\ref{Conse Pontryagin}\ref{PMP:final-Hamiltonian}, that $\lambda = 0$, which is a contradiction according to Proposition~\ref{Conse Pontryagin}\ref{PMP:nontrivial}. Therefore $\abs*{p(t)}\neq 0$ for every $t\in [t_0,t_0+T]$. We then deduce from Proposition~\ref{Conse Pontryagin}\ref{PMP:optimal-control} that $u(t) = \frac{p(t)}{\abs{p(t)}}$ for almost every $t \in [t_0, t_0 + T]$, and the conclusion for every $t \in [t_0, t_0 + T]$ holds by redefining $u$ in a set of measure zero. Hence \ref{item:p-is-nonzero} is proved.

Note that, for every $t \in [t_0, t_0 + T]$, we have $\xi \in \Xi_\epsilon(t)$ if and only if
\[
\frac{\xi}{\abs{p(t)}} \in \conv\left\{\zeta \in \mathbbm R^d \suchthat (\zeta, p(t)) \in N_{G_\epsilon(t)}(\gamma(t), \dot\gamma(t))\right\}.
\]
In particular, \ref{item:Xi} follows as a consequence of \ref{item:p-is-nonzero} and Lemma~\ref{lemm:pre-Gronwall-for-p}. Notice also that, by Proposition~\ref{Conse Pontryagin}\ref{PMP:costate-equation}, we have $\dot p(t) \in \abs{p(t)} \Xi_\epsilon(t)$ for a.e.\ $t \in [t_0, t_0 + T]$.

Let us now prove \ref{item:system-on-gamma-u}. Note that, by \ref{item:p-is-nonzero}, $u(t) \in \mathbbm S^{d-1}$ for every $t \in [t_0, t_0 + T]$ and, since $p$ is absolutely continuous, $u$ is also absolutely continuous. Let $\xi: [t_0, t_0 + T] \to \mathbbm R^d$ be a measurable function with $\xi(t) \in \Xi_\epsilon(t)$ and $\dot p(t) = \abs{p(t)}\xi(t)$ for a.e.\ $t \in [t_0, t_0 + T]$. We compute, for a.e.\ $t \in [t_0, t_0 + T]$,
\[
\Dot{u}(t) = \frac{\Dot{p}(t) \abs*{p(t)} - \frac{p(t) \cdot \Dot{p}(t)}{\abs*{p(t)}} p(t)}{\abs*{p(t)}^2} = \xi(t) - (u(t) \cdot \xi(t)) u(t),
\]
which yields the differential inclusion for $u$ in system \eqref{ODE dot u}. By \ref{item:Xi}, we deduce that $\abs{\dot u(t)} \leq L_\epsilon$ for a.e.\ $t \in [t_0, t_0 + T]$, yielding that $u$ is $L_\epsilon$-Lipschitz continuous on $[t_0, t_0 + T]$. The first equation in \eqref{ODE dot u} is simply \eqref{eq:penalized-control-system}, and its right-hand side is continuous on $[t_0, t_0 + T]$, showing that $\gamma \in \mathbf{C}^{1}([t_0, t_0 + T]; \mathbbm R^d)$.
\end{proof}

We will need in the sequel the following technical lemma.

\begin{lemma}
\label{lemm:technical}
Consider the optimal control problem $\OCP_\epsilon(k_{\epsilon})$ under assumptions \HypoOmega{}, \ref{HypoOCP-k-Bound}, and \ref{HypoOCP-k-Lip}, and with $k_\epsilon$ defined from $k$ as in \eqref{eq:defi-k-epsilon}. Let $(t_0,x_0)$, $\gamma$, $T$, and $p$ be as in the statement of Proposition~\ref{Conse Pontryagin} and $L > 0$ be the Lipschitz constant of $\mathbbm R^d \ni x \mapsto k(t, x) \in [K_{\min}, K_{\max}]$, which is independent of $t$. For $t \in [t_0, t_0 + T]$, define $\Xi_\epsilon(t)$ as in the statement of Proposition~\ref{prop:consequences-of-Pontryagin-for-optimal-trajectories-of-ocp-epsilon}, i.e.,
\[
\Xi_\epsilon(t) = \conv\left\{\xi \in \mathbbm R^d \suchthat (\xi \abs{p(t)}, p(t)) \in N_{G_\epsilon(t)}(\gamma(t), \dot\gamma(t))\right\},
\]
where $G_\epsilon(t)$ is given by \eqref{eq:PMP:defi-G}. Finally, let $W$ be a neighborhood of $\partial\Omega$ in which the signed distance $d^{\pm}_{\partial\Omega}$ belongs to $\mathbf C^{1, 1}(W; \mathbb R)$, which exists thanks to Proposition~\ref{prop:OmegaC11}\ref{item:W-dpm}.

For almost every $t \in [t_0, t_0 + T]$, if $\gamma(t) \in W$ and $\gamma(t) \notin \bar\Omega$, then, for every $\xi \in \Xi_\epsilon(t)$, we have
\begin{equation}
\label{eq:tech-lemma}
\abs*{\xi \cdot \nabla d_{\partial\Omega}^\pm(\gamma(t)) - \frac{1}{\epsilon} k(t, \gamma(t))} \leq \left(1 - \frac{1}{\epsilon} d_\Omega(\gamma(t))\right) L.
\end{equation}
\end{lemma}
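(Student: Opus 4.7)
The plan is to unwind the definition of the limiting normal cone $N_{G_\epsilon(t)}(\gamma(t), \dot\gamma(t))$ into approximating proximal normals, and then to exploit the explicit product structure $k_\epsilon(t, x) = k(t, x)\left(1 - d^\pm_{\partial\Omega}(x)/\epsilon\right)$ available in the region $W \setminus \bar\Omega$, where $d_\Omega = d^\pm_{\partial\Omega}$ and $d^\pm_{\partial\Omega}$ is $\mathbf{C}^{1, 1}$ by Proposition~\ref{prop:OmegaC11}\ref{item:W-dpm}. Observe first that the inequality \eqref{eq:tech-lemma} is a convex condition in $\xi$ (of the form $\abs{\xi \cdot v - c} \leq B$ with $v$, $c$, $B$ independent of $\xi$), hence preserved under convex combinations; it therefore suffices to prove it for $\xi$ in the set $\{\xi \in \mathbbm R^d \suchthat (\xi\abs{p(t)}, p(t)) \in N_{G_\epsilon(t)}(\gamma(t), \dot\gamma(t))\}$ before taking the convex hull.

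Given such a $\xi$, Definition~\ref{defi:normal-cones}\ref{defi:limiting-normal-cone} yields sequences $(x_n, v_n) \in G_\epsilon(t)$ and $(\zeta_n, p_n) \in N_{G_\epsilon(t)}^{\mathrm P}(x_n, v_n)$ with $(x_n, v_n) \to (\gamma(t), \dot\gamma(t))$ and $(\zeta_n, p_n) \to (\xi\abs{p(t)}, p(t))$. Since $\gamma(t) \in W \setminus \bar\Omega$ and $d_\Omega(\gamma(t)) < \epsilon$ by Proposition~\ref{prop:opt-epsilon-remains-close} are open conditions, $x_n \in W \setminus \bar\Omega$ with $d_\Omega(x_n) < \epsilon$ for $n$ large. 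The case $\abs{v_n} < k_\epsilon(t, x_n)$ would place $(x_n, v_n)$ in the interior of $G_\epsilon(t)$, forcing $(\zeta_n, p_n) = 0$ and contradicting $p_n \to p(t) \neq 0$; hence $\abs{v_n} = k_\epsilon(t, x_n) > 0$. Testing the proximal normal inequality against points $(x_n, v)$ with $\abs{v} \leq \abs{v_n}$ (for instance $v = (1 - s) v_n + s w$ with $\abs{w} \leq \abs{v_n}$) and letting $s \to 0^+$ shows $p_n \cdot v_n = \abs{p_n}\abs{v_n}$, so that $p_n = \abs{p_n}\, v_n/\abs{v_n}$.

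The crux is then to test the proximal normal inequality against the admissible curve $(x, v) = \bigl(x_n + s y,\, k_\epsilon(t, x_n + sy)\, v_n/\abs{v_n}\bigr) \in G_\epsilon(t)$ for $s > 0$ small and arbitrary $y \in \mathbbm R^d$; since $\abs{v - v_n} \leq L_\epsilon s \abs{y}$, dividing by $s$ and taking $\limsup_{s \to 0^+}$ yields
\[
\zeta_n \cdot y + \abs{p_n} \limsup_{s \to 0^+} \frac{k_\epsilon(t, x_n + sy) - k_\epsilon(t, x_n)}{s} \leq 0.
\]
Applying the algebraic identity $ab - cd = (a - c) b + c (b - d)$ together with the differentiability of $d^\pm_{\partial\Omega}$ on $W$ decomposes
\[
\limsup_{s \to 0^+} \frac{k_\epsilon(t, x_n + sy) - k_\epsilon(t, x_n)}{s} = \left(1 - \frac{d^\pm_{\partial\Omega}(x_n)}{\epsilon}\right) \mathcal L_n(y) - \frac{k(t, x_n)}{\epsilon}\, \nabla d^\pm_{\partial\Omega}(x_n) \cdot y,
\]
where $\mathcal L_n(y) = \limsup_{s \to 0^+} \frac{k(t, x_n + sy) - k(t, x_n)}{s}$ satisfies $\abs{\mathcal L_n(y)} \leq L \abs{y}$ by \ref{HypoOCP-k-Lip}. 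Setting $w_n = \zeta_n - \abs{p_n}\, \frac{k(t, x_n)}{\epsilon}\, \nabla d^\pm_{\partial\Omega}(x_n)$ and using $\mathcal L_n(y) \geq -L\abs{y}$ then give $w_n \cdot y \leq L\abs{p_n}(1 - d^\pm_{\partial\Omega}(x_n)/\epsilon)\abs{y}$ for every $y$, hence $\abs{w_n} \leq L\abs{p_n}(1 - d^\pm_{\partial\Omega}(x_n)/\epsilon)$.

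Finally, dotting $w_n$ with the unit vector $\nabla d^\pm_{\partial\Omega}(x_n)$ and dividing by $\abs{p_n} > 0$ produces
\[
\abs*{\frac{\zeta_n}{\abs{p_n}} \cdot \nabla d^\pm_{\partial\Omega}(x_n) - \frac{k(t, x_n)}{\epsilon}} \leq L\left(1 - \frac{d^\pm_{\partial\Omega}(x_n)}{\epsilon}\right),
\]
and passing to the limit $n \to \infty$ — using $\zeta_n/\abs{p_n} \to \xi$, continuity of $k(t, \cdot)$ and $\nabla d^\pm_{\partial\Omega}$, and the identity $d^\pm_{\partial\Omega}(\gamma(t)) = d_\Omega(\gamma(t))$ valid since $\gamma(t) \notin \bar\Omega$ — yields \eqref{eq:tech-lemma}. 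The main obstacle will be the non-differentiability of $k$; keeping its contribution inside the $\limsup$ and invoking only its Lipschitz bound circumvents this without appealing to any nonsmooth subdifferential calculus beyond the definitions already recalled.
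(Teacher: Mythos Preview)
Your proof is correct and follows the same overall strategy as the paper: reduce to generators of the convex hull, approximate the limiting normal by proximal normals at nearby points $(x_n,v_n)$, test the proximal inequality against displaced points of the form $\bigl(x_n+\text{(shift)},\,k_\epsilon(t,\cdot)\,v_n/\abs{v_n}\bigr)$, and exploit the product structure $k_\epsilon=k\cdot(1-d_{\partial\Omega}^\pm/\epsilon)$ valid in $W\setminus\bar\Omega$.

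The tactical differences are worth recording. The paper tests only in the two specific directions $y=\pm\nabla d_{\partial\Omega}^\pm(x_n)$, chooses a single displacement $\alpha_n$ tied to the proximal constant $M_n$ so that the quadratic remainder vanishes in the limit, and then extracts a subsequence to handle the (non-differentiable) increment of $k$ via a limit point $\beta_\ast\in[-L,L]$. You instead keep $n$ fixed, test in \emph{all} directions $y$, and take $\limsup_{s\to0^+}$ first; this yields the full vector bound $\abs{w_n}\le L\abs{p_n}(1-d_{\partial\Omega}^\pm(x_n)/\epsilon)$ before dotting with $\nabla d_{\partial\Omega}^\pm(x_n)$, and it avoids any subsequence extraction. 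You also make the boundary structure of $G_\epsilon(t)$ explicit by arguing that $\abs{v_n}=k_\epsilon(t,x_n)$ and $p_n\parallel v_n$, whereas the paper reaches the analogous identity $p_n\cdot u_n\to\abs{p(t)}$ through the convergence $u_n\to u(t)=p(t)/\abs{p(t)}$ obtained from Proposition~\ref{prop:consequences-of-Pontryagin-for-optimal-trajectories-of-ocp-epsilon}. Both routes are valid; yours is a bit more self-contained and gives a slightly stronger intermediate estimate.
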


\begin{proof}
Fix $t \in [t_0, t_0 + T]$ at which $\gamma$ is differentiable and such that $\gamma(t) \in W$ and $\gamma(t) \notin \bar\Omega$. Since the set of $\xi \in \mathbbm R^d$ such that \eqref{eq:tech-lemma} holds is convex, it suffices to show that, for every $\xi \in \mathbbm R^d$, if $(\xi \abs{p(t)}, p(t)) \in N_{G_\epsilon(t)}(\gamma(t), \dot\gamma(t))$, then \eqref{eq:tech-lemma} holds, where $G_\epsilon(t)$ is given by \eqref{eq:PMP:defi-G}. Fix $\xi \in \mathbbm R^d$ such that $(\xi \abs{p(t)}, p(t)) \in N_{G_\epsilon(t)}(\gamma(t), \dot\gamma(t))$. Let $u$ be an optimal control associated with $\gamma$ and recall that, by Proposition~\ref{prop:consequences-of-Pontryagin-for-optimal-trajectories-of-ocp-epsilon}, $u$ is Lipschitz continuous.

By definition of $N_{G_\epsilon(t)}(\gamma(t), \dot\gamma(t))$, there exist sequences $(x_n)_{n \in \mathbbm N}$ in $\mathbbm R^d$, $(u_n)_{n \in \mathbbm N}$ in $\bar B$, $(\zeta_n)_{n \in \mathbbm N}$ in $\mathbbm R^d$, and $(p_n)_{n \in \mathbbm N}$ in $\mathbbm R^d$ such that, as $n \to +\infty$, we have $x_n \to \gamma(t)$, $k_\epsilon(t, x_n) u_n \to k_\epsilon(t, \gamma(t)) u(t)$, $\zeta_n \to \xi \abs{p(t)}$, and $p_n \to p(t)$, and in addition $(\zeta_n, p_n) \in N_{G_\epsilon(t)}^{\mathrm P}(x_n, k_\epsilon(t, x_n) u_n)$ for every $n \in \mathbbm N$. By Proposition~\ref{prop:consequences-of-Pontryagin-for-optimal-trajectories-of-ocp-epsilon}\ref{item:p-is-nonzero}, we have $\abs{p(t)} \neq 0$, and we then define $\xi_n = \frac{\zeta_n}{\abs{p(t)}}$ for $n \in \mathbbm N$. Then, as $n \to +\infty$, we have $\xi_n \to \xi$ and, since $k_\epsilon$ is continuous and\footnote{Recall that, by Proposition~\ref{prop:opt-epsilon-remains-close}, we have $d_\Omega(\gamma(t)) < \epsilon$.} $k_\epsilon(t, \gamma(t)) > 0$, we also have $u_n \to u(t)$.

For every $n \in \mathbbm N$, since $(\zeta_n, p_n) \in N_{G_\epsilon(t)}^{\mathrm P}(x_n, k_\epsilon(t, x_n) u_n)$, there exists $M_n > 0$ such that
\begin{multline}
\label{eq:estim-xin-proximal}
(\xi_n \abs{p(t)}, p_n) \cdot (y - x_n, k_\epsilon(t, y) w - k_\epsilon(t, x_n) u_n) \\
 \leq M_n \left[\abs*{y - x_n}^2 + \abs*{k_\epsilon(t, y) w - k_\epsilon(t, x_n) u_n}^2\right]
\end{multline}
for every $y \in \mathbbm R^d$ and $w \in \bar B$. Let $(\alpha_n)_{n \in \mathbbm N}$ be a sequence of positive real numbers such that $\alpha_n \to 0$ and $M_n \alpha_n \to 0$ as $n \to +\infty$. Since $\gamma(t) \in W$, $\gamma(t) \notin \bar\Omega$, and $d_\Omega(\gamma(t)) < \epsilon$, for $n$ large enough, we have $x_n \in W$, $x_n \notin \bar\Omega$, and $d_\Omega(x_n) < \epsilon$. In particular, by Proposition~\ref{prop:OmegaC11}\ref{item:W-dpm}, $\nabla d_{\partial\Omega}^\pm (x_n)$ is well-defined and $\abs{\nabla d_{\partial\Omega}^\pm (x_n)} = 1$. For $n \in \mathbbm N$, apply \eqref{eq:estim-xin-proximal} with $y = x_n + \alpha_n \nabla d_{\partial\Omega}^\pm (x_n)$ and $w = u_n$. Then
\[
\alpha_n \abs{p(t)} \xi_n \cdot \nabla d_{\partial\Omega}^\pm (x_n) + p_n \cdot u_n \left[k_\epsilon(t, x_n + \alpha_n \nabla d_{\partial\Omega}^\pm (x_n)) - k_\epsilon(t, x_n)\right] \leq M_n \alpha_n^2 (1 + L_\epsilon^2),
\]
where $L_\epsilon$ is the Lipschitz constant of $\mathbbm R^d \ni x \mapsto k_\epsilon(t, x) \in \mathbbm R_+$, which is independent of $t$. Dividing by $\alpha_n$, we get
\begin{equation}
\label{eq:monster-1}
\abs{p(t)} \xi_n \cdot \nabla d_{\partial\Omega}^\pm (x_n) + p_n \cdot u_n \frac{k_\epsilon(t, x_n + \alpha_n \nabla d_{\partial\Omega}^\pm (x_n)) - k_\epsilon(t, x_n)}{\alpha_n} \leq M_n \alpha_n (1 + L_\epsilon^2).
\end{equation}
Since $\nabla d_{\partial\Omega}^\pm$ is continuous in $W$, we have, as $n \to +\infty$, that $\xi_n \cdot \nabla d_{\partial\Omega}^\pm (x_n) \to \xi \cdot \nabla d_{\partial\Omega}^\pm (\gamma(t))$, $p_n \cdot u_n \to p(t) \cdot u(t) = \abs{p(t)}$, and, by construction, $M_n \alpha_n \to 0$.

Let us denote for simplicity $z_n = x_n + \alpha_n \nabla d_{\partial\Omega}^\pm (x_n)$. Since $\alpha_n \to 0$ as $n \to +\infty$ and $\gamma(t) \in W$, $\gamma(t) \notin \bar\Omega$, and $d_\Omega(\gamma(t)) < \epsilon$, we deduce that, for $n$ large enough, $x_n \in W$, $z_n \in W$, $x_n \notin \bar\Omega$, $z_n \notin \bar\Omega$, $d_\Omega(x_n) < \epsilon$, and $d_\Omega(z_n) < \epsilon$ for $n$ large enough. Using those facts and \eqref{eq:defi-k-epsilon}, we compute
\begin{align*}
\frac{1}{\alpha_n}\left[k_\epsilon(t, z_n) - k_\epsilon(t, x_n)\right] & = \frac{1}{\alpha_n}\left[k(t, z_n) \left(1 - \frac{1}{\epsilon} d_{\partial\Omega}^\pm(z_n)\right) - k(t, x_n) \left(1 - \frac{1}{\epsilon} d_{\partial\Omega}^\pm(x_n)\right)\right] \displaybreak[0] \\
& = \frac{1}{\epsilon} k(t, z_n) \frac{d_{\partial\Omega}^\pm(x_n) - d_{\partial\Omega}^{\pm}(z_n)}{\alpha_n} + \beta_n \left(1 - \frac{1}{\epsilon} d_{\partial\Omega}^\pm(x_n)\right),
\end{align*}
where
\[
\beta_n = \frac{k(t, z_n) - k(t, x_n)}{\alpha_n}.
\]
Note that, since $k$ is $L$-Lipschitz continuous in its second argument, we have $\abs{\beta_n} \leq L$ for $n$ large enough. In particular, up to extracting a subsequence, there exists $\beta_\ast \in [-L, L]$ such that $\beta_n \to \beta_\ast$ as $n \to +\infty$. Moreover, since $d_{\partial\Omega}^\pm \in \mathbf{C}^{1, 1}(W; \mathbbm R)$, we have $\frac{d_{\partial\Omega}^\pm(x_n) - d_{\partial\Omega}^{\pm}(z_n)}{\alpha_n} \to -\abs*{\nabla d_{\partial\Omega}^\pm(\gamma(t))}^2 = -1$ as $n \to +\infty$. Hence, letting $n \to +\infty$ in \eqref{eq:monster-1} and dividing by $\abs{p(t)}$, we deduce that
\begin{equation}
\label{eq:monster-2}
\xi \cdot \nabla d_{\partial\Omega}^\pm (\gamma(t)) - \frac{1}{\epsilon} k(t, \gamma(t)) + \beta_\ast \left(1 - \frac{1}{\epsilon} d_{\partial\Omega}^\pm(\gamma(t))\right) \leq 0.
\end{equation}

Performing the same computations as above from \eqref{eq:estim-xin-proximal} but now taking $y = x_n - \alpha_n \nabla d_{\partial\Omega}^\pm (x_n)$ and $w = u_n$, we get that
\[
-\xi \cdot \nabla d_{\partial\Omega}^\pm (\gamma(t)) + \frac{1}{\epsilon} k(t, \gamma(t)) + \tilde\beta_\ast \left(1 - \frac{1}{\epsilon} d_{\partial\Omega}^\pm(\gamma(t))\right) \leq 0
\]
for some $\tilde\beta_\ast \in [-L, L]$. Together with \eqref{eq:monster-2}, this yields \eqref{eq:tech-lemma}, as required.
\end{proof}

Proposition~\ref{prop:opt-epsilon-remains-close} stated that, for every $\epsilon > 0$, optimal trajectories for $\OCP_\epsilon(k_\epsilon)$ always remain in an $\epsilon$ neighborhood of $\bar\Omega$. Using the previous results, we can now be more precise and show that, if $\epsilon > 0$ is small enough, optimal trajectories for $\OCP_\epsilon(k_\epsilon)$ starting at $(t_0, x_0) \in \mathbbm R_+ \times \bar\Omega$ never leave $\bar\Omega$.

\begin{proposition}
\label{gamma-eps in Omeg}
Consider the optimal control problem $\OCP_\epsilon(k_\epsilon)$ under assumptions \HypoOmega{}, \ref{HypoOCP-k-Bound}, and \ref{HypoOCP-k-Lip} and $k_\epsilon$ defined from $k$ as in \eqref{eq:defi-k-epsilon}. There exists $\epsilon_0 > 0$ such that, for every $\epsilon \in (0, \epsilon_0)$, $(t_0, x_0) \in \mathbbm R_+ \times \mathbbm R^d$, and $\gamma \in \Opt_\epsilon(k_\epsilon, t_0, x_0)$, we have $d_\Omega(\gamma(t)) \leq d_\Omega(x_0)$ for every $t \in \mathbbm R_+$. In particular, if $x_0 \in \bar\Omega$, then $\gamma(t) \in \bar\Omega$ for every $t \in \mathbbm R_+$.
\end{proposition}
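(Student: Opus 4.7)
The plan is to argue by contradiction, using the Pontryagin-type dynamics from Proposition~\ref{prop:consequences-of-Pontryagin-for-optimal-trajectories-of-ocp-epsilon} together with Lemma~\ref{lemm:technical}, which ensures that whenever $\gamma(t)\notin\bar\Omega$, every $\xi\in\Xi_\epsilon(t)$ has an outward component of order $K_{\min}/\epsilon$. I choose $\epsilon_0>0$ small enough that (a) every $x\in\mathbbm R^d\setminus\bar\Omega$ with $d_\Omega(x)<\epsilon_0$ lies in the neighborhood $W$ from Proposition~\ref{prop:OmegaC11}\ref{item:W-dpm}, so that $d^\pm_{\partial\Omega}$ is $\mathbf C^{1,1}$ there with Hessian $L^\infty$-bounded by a constant $C$ depending only on $\Omega$; and (b) the quantity $B:=\frac{K_{\min}}{\epsilon}-L-K_{\max}C$ is strictly positive. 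The trivial cases ($x_0\in\Gamma$, $d_\Omega(x_0)\ge\epsilon$ which forces $\Opt_\epsilon(k_\epsilon,t_0,x_0)=\varnothing$, or $t$ outside $[t_0,t_0+T]$ where $\gamma$ is constant) are handled directly.

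Set $T:=\varphi_\epsilon(t_0,x_0)\in(0,+\infty)$ and suppose for contradiction that $d_\Omega(\gamma(t_\ast))>d_\Omega(x_0)$ for some $t_\ast\in(t_0,t_0+T)$. By continuity of $d_\Omega\circ\gamma$ and since $d_\Omega(\gamma(t_0+T))=0$, I can take $s_1<s_2$ in $[t_0,t_0+T]$ with $d_\Omega(\gamma(s_i))=d_\Omega(x_0)$ for $i=1,2$ and $d_\Omega(\gamma(t))>d_\Omega(x_0)$ on $(s_1,s_2)$. On this interval $\gamma\in W\setminus\bar\Omega$, so $\psi(t):=d^\pm_{\partial\Omega}(\gamma(t))$ equals $d_\Omega(\gamma(t))$ and is $\mathbf C^1$, while $\phi(t):=u(t)\cdot\nabla d^\pm_{\partial\Omega}(\gamma(t))$ is Lipschitz; moreover $\dot\psi=k_\epsilon\phi$ and $\psi(s_1)=\psi(s_2)$.

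The core step is a differential inequality for $\phi$. Using $\dot u=\xi-(u\cdot\xi)u$ with a measurable selection $\xi(t)\in\Xi_\epsilon(t)$ (Proposition~\ref{prop:consequences-of-Pontryagin-for-optimal-trajectories-of-ocp-epsilon}\ref{item:system-on-gamma-u}) and the chain rule applied to $\nabla d^\pm_{\partial\Omega}\circ\gamma$, one computes
\[
\dot\phi=\xi\cdot\nabla d^\pm_{\partial\Omega}(\gamma)-(u\cdot\xi)\phi+k_\epsilon\,u^{\transp}D^2 d^\pm_{\partial\Omega}(\gamma)\,u
\]
almost everywhere on $(s_1,s_2)$. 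Bounding the first term below by $\frac{K_{\min}}{\epsilon}-L$ via Lemma~\ref{lemm:technical}, the middle term by $(L+K_{\max}/\epsilon)\abs{\phi}$ via $\abs{\xi}\le L+K_{\max}/\epsilon$ (Proposition~\ref{prop:consequences-of-Pontryagin-for-optimal-trajectories-of-ocp-epsilon}\ref{item:Xi}), and the last by $K_{\max}C$, I arrive at
\[
\dot\phi\ge B-A\abs{\phi}\qquad\text{a.e.\ on }(s_1,s_2),\qquad A:=L+\tfrac{K_{\max}}{\epsilon}.
\]
The crucial feature is that $\dot\phi\ge B>0$ whenever $\phi$ vanishes.

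To conclude, integrating $\dot\psi=k_\epsilon\phi$ on short intervals near $s_1$ and $s_2$ and using that $\psi$ strictly exceeds $\psi(s_1)=\psi(s_2)$ on $(s_1,s_2)$ yields $\phi(s_1)\ge0$ and $\phi(s_2)\le0$. Then $\phi\ge0$ throughout $[s_1,s_2]$: otherwise, pick $s'\in(s_1,s_2]$ with $\phi(s')<0$ and set $r:=\sup\{t\in[s_1,s']:\phi(t)\ge0\}$; by continuity $\phi(r)=0$ and $\phi<0$ on $(r,s']$, but since $\abs{\phi}\le B/(2A)$ on $[r,r+\delta]$ for $\delta>0$ small, the inequality yields $\dot\phi\ge B/2$ a.e.\ there, hence $\phi(r+\delta)\ge B\delta/2>0$ — contradicting $\phi<0$ on $(r,s']$. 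Thus $\dot\psi\ge0$ on $[s_1,s_2]$, forcing $\psi$ non-decreasing and hence $\psi(s_1)<\psi(t)\le\psi(s_2)$ for $t\in(s_1,s_2)$, which contradicts $\psi(s_1)=\psi(s_2)$. The main obstacle is the differential inequality with a usable $B>0$: it rests on Lemma~\ref{lemm:technical} and the $\mathbf C^{1,1}$ control of $d^\pm_{\partial\Omega}$ on $W$, and dictates the smallness of $\epsilon_0$; the zero-crossing argument that follows is then routine.
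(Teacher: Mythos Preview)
Your proof is correct and follows essentially the same strategy as the paper: set up the contradiction interval $(s_1,s_2)$, use Lemma~\ref{lemm:technical} to control the outward normal component of the dynamics, and contradict the endpoint sign conditions. The only substantive difference is that the paper works with the costate itself, setting $\alpha(t)=p(t)\cdot\nabla d^\pm_{\partial\Omega}(\gamma(t))$ rather than your $\phi(t)=u(t)\cdot\nabla d^\pm_{\partial\Omega}(\gamma(t))$. Because $\dot p=\abs{p}\,\xi$ has no projection term, the paper obtains directly
\[
\dot\alpha(t)\ge\abs{p(t)}\Bigl(\tfrac{K_{\min}}{\epsilon}-L-CK_{\max}\Bigr)>0,
\]
so $\alpha(a)\ge0$ and $\alpha(b)\le0$ are immediately contradictory, with no zero-crossing argument needed. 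Your choice of $u$ introduces the extra $-(u\cdot\xi)\phi$ term, weakening the inequality to $\dot\phi\ge B-A\abs{\phi}$ and forcing the additional (correct, but avoidable) step showing $\phi\ge0$ throughout. A minor point: you write $u^{\transp}D^2 d^\pm_{\partial\Omega}(\gamma)u$, but since $d^\pm_{\partial\Omega}$ is only $\mathbf C^{1,1}$, the Hessian exists only a.e.; for the bound you need, it is cleaner (as the paper does) to use directly that $\nabla d^\pm_{\partial\Omega}\circ\gamma$ is $CK_{\max}$-Lipschitz.
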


\begin{proof}
Take $(t_0, x_0) \in \mathbbm R_+ \times \mathbbm R^d$. Notice first that, if $x_0 \in \Gamma$, then $\Opt_\epsilon(k_\epsilon, t_0, x_0)$ is equal to the singleton containing only the trajectory that remains in $x_0$ for all times. We thus assume, for the rest of the proof, that $x_0 \in \bar\Omega \setminus \Gamma$.

Let $W$ be as in the statement of Proposition~\ref{prop:OmegaC11}\ref{item:W-dpm} and take $\epsilon_\ast > 0$ such that $\{x \in \mathbbm R^d \suchthat d_{\partial\Omega}(x) < \epsilon_\ast\} \subset W$. Let $C$ be a Lipschitz constant of $\nabla d_{\partial \Omega}^{\pm}$ on $W$ and $L$, $K_{\max}$, and $K_{\min}$ be as in \ref{HypoOCP-k-Bound} and \ref{HypoOCP-k-Lip}. Let $\epsilon_0 = \min\left\{\epsilon_\ast, \frac{K_{\min}}{L + C K_{\max}}\right\}$ and take $\epsilon \in (0,\epsilon_0)$.

Fix $\gamma \in \Opt_\epsilon(k_\epsilon, t_0, x_0)$ and let $T = \varphi_\epsilon(t_0, x_0)$. Recall that, by Proposition~\ref{prop:opt-epsilon-remains-close}, $d_\Omega(\gamma(t)) < \epsilon$ for every $t \in \mathbbm R_+$. Let $p: [t_0, t_0 + T] \to \mathbbm R^d$ be the costate associated with $\gamma$, whose existence is asserted in Proposition~\ref{Conse Pontryagin}. By Propositions~\ref{Conse Pontryagin}\ref{PMP:costate-equation} and \ref{prop:consequences-of-Pontryagin-for-optimal-trajectories-of-ocp-epsilon}\ref{item:p-is-nonzero}, there exists a measurable function $\xi: [t_0, t_0 + T] \to \mathbbm R^d$ such that $\xi(t) \in \Xi_\epsilon(t)$ and $\dot p(t) = \abs{p(t)} \xi(t)$ for a.e.\ $t \in [t_0, t_0 + T]$, where $\Xi_\epsilon(t)$ is defined in \eqref{eq:defi-Xi-epsilon}.

By definition of optimal trajectories, we have that $\gamma(t) = x_0$ for every $t \in [0, t_0]$ and $\gamma(t) = \gamma(t_0 + T) \in \Gamma \subset \bar\Omega$ for every $t \in [t_0 + T, +\infty)$, and hence $d_\Omega(\gamma(t)) \leq d_\Omega(x_0)$ for every $t \in [0, t_0] \cup [t_0 + T, +\infty)$. Assume, to obtain a contradiction, that there exist $a, b \in [t_0, t_0 + T]$ such that $a < b$, $d_\Omega(\gamma(t)) > d_\Omega(x_0)$ for $t \in (a,b)$ and $d_\Omega(\gamma(t)) = d_\Omega(x_0)$ for $t\in \{a,b\}$. In particular, $d_\Omega(\gamma(t)) > 0$ for $t \in (a, b)$, implying that $\gamma(t) \notin \bar\Omega$ for $t \in (a, b)$ and $\gamma(t) \notin \Omega$ for $t \in [a, b]$.

Recall that, by Proposition~\ref{prop:opt-epsilon-remains-close}, $d_{\Omega}(\gamma(t)) < \epsilon$ for every $t \in \mathbbm R_+$, and thus, in particular, $\gamma(t) \in W$ for every $t \in [a, b]$. Hence, by Propositions~\ref{prop:OmegaC11} and \ref{prop:consequences-of-Pontryagin-for-optimal-trajectories-of-ocp-epsilon}\ref{item:system-on-gamma-u}, the map $t \mapsto d_{\partial \Omega}^{\pm}(\gamma(t))$ is differentiable on $(t_0, t_0 + T)$, $d_{\partial\Omega}^\pm(\gamma(t)) > d_\Omega(x_0)$ for $t \in (a,b)$, and $d_{\partial\Omega}^\pm(\gamma(t)) = d_\Omega(x_0)$ for $t \in \{a,b\}$. Thus, its derivative is nonnegative at $a$ and nonpositive at $b$, i.e.,
\[
\Dot{\gamma}(a) \cdot \nabla d_{\partial \Omega}^{\pm}(\gamma(a)) \ge 0 \qquad \text{ and } \qquad \Dot{\gamma}(b) \cdot \nabla d_{\partial \Omega}^{\pm}(\gamma(b)) \le 0.
\]
Since, by Propositions~\ref{prop:opt-epsilon-remains-close} and \ref{prop:consequences-of-Pontryagin-for-optimal-trajectories-of-ocp-epsilon}, we have $\Dot{\gamma}(t) = k_{\epsilon}(t, \gamma(t))\frac{p(t)}{\abs*{p(t)}}$ and $\frac{k_{\epsilon}(t, \gamma(t))}{\abs*{p(t)}} > 0$ for every $t \in [t_0, t_0 + T]$, we deduce that
\begin{equation}\label{contradicts p-epsilon}
    p(a)\cdot \nabla d_{\partial \Omega}^{\pm}(\gamma(a)) \ge 0 \qquad \text{ and } \qquad
    p(b)\cdot \nabla d_{\partial \Omega}^{\pm}(\gamma(b)) \le 0.
\end{equation}
Consider now the map $\alpha:t\mapsto p(t)\cdot \nabla d_{\partial \Omega}^{\pm}(\gamma(t))$. Since $\gamma(t) \in W$ for every $t \in [a, b]$ and $d_{\partial\Omega}^\pm \in \mathbf C^{1, 1}(W; \mathbbm R)$, $\alpha$ is absolutely continuous on $[a, b]$ and, for a.e.\ $t \in (a, b)$, we have, using Lemma~\ref{lemm:technical} and the fact that $\nabla d_{\partial\Omega}^\pm \circ \gamma$ is $C K_{\max}$-Lipschitz continuous in $[a, b]$, that
\begin{align*}
    \dot\alpha(t) & = \Dot{p}(t)\cdot \nabla d_{\partial \Omega}^{\pm}(\gamma(t))+p(t)\cdot \frac{\diff\left[\nabla d_{\partial \Omega}^{\pm} \circ \gamma\right]}{\diff t}(t) \displaybreak[0] \\
		& = \abs{p(t)} \xi(t) \cdot \nabla d_{\partial \Omega}^{\pm}(\gamma(t))+p(t)\cdot \frac{\diff\left[\nabla d_{\partial \Omega}^{\pm} \circ \gamma\right]}{\diff t}(t) \displaybreak[0] \\
		& \geq \frac{1}{\epsilon} \abs{p(t)} k(t, \gamma(t)) - \left(1 - \frac{1}{\epsilon} d_\Omega(\gamma(t))\right) L \abs{p(t)} - C K_{\max} \abs{p(t)} \displaybreak[0] \\
    & \geq \abs{p(t)} \left(\frac{K_{\min}}{\epsilon} - L - C K_{\max}\right) > 0,
\end{align*}
which contradicts \eqref{contradicts p-epsilon}.
\end{proof}

We are now in position to show the main result of this section.

\begin{theorem}
\label{thm:Opt-eps-equals-Opt}
Consider the optimal control problem $\OCP(k)$ under assumptions \HypoOmega{}, \ref{HypoOCP-k-Bound}, and \ref{HypoOCP-k-Lip}, as well as the problem $\OCP_\epsilon(k_\epsilon)$ with $k_\epsilon$ defined from $k$ as in \eqref{eq:defi-k-epsilon}. There exists $\epsilon_0 > 0$ such that, for every $\epsilon \in (0, \epsilon_0)$ and $(t_0, x_0) \in \mathbbm R_+ \times \bar\Omega$, we have $\Opt_\epsilon(k_\epsilon, t_0, x_0) = \Opt(k, t_0, x_0)$ and $\varphi_\epsilon(t_0, x_0) = \varphi(t_0, x_0)$.
\end{theorem}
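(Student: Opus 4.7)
The plan is to deduce the theorem directly from Proposition~\ref{gamma-eps in Omeg} combined with the key observation that $k_\epsilon$ and $k$ agree on $\mathbbm R_+ \times \bar\Omega$. Let $\epsilon_0 > 0$ be the constant provided by Proposition~\ref{gamma-eps in Omeg}, fix $\epsilon \in (0, \epsilon_0)$, and take $(t_0, x_0) \in \mathbbm R_+ \times \bar\Omega$.

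First I would show the inequality $\varphi_\epsilon(t_0, x_0) \leq \varphi(t_0, x_0)$. By Proposition~\ref{PropOCP-1}\ref{PropOCP-ExistsOptimal}, there exists $\gamma \in \Opt(k, t_0, x_0)$; then $\gamma$ takes values in $\bar\Omega$, so $k_\epsilon(t, \gamma(t)) = k(t, \gamma(t))$ for every $t \geq 0$ by \eqref{eq:defi-k-epsilon}, which means $\gamma \in \Adm_\epsilon(k_\epsilon)$ (after extending it as a function with values in $\mathbbm R^d$). Its first exit time being $\varphi(t_0, x_0) < +\infty$, the claimed inequality follows from the definition of $\varphi_\epsilon$. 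In particular, $\varphi_\epsilon(t_0, x_0) < +\infty$, so by the standard existence argument recalled just before Proposition~\ref{prop:opt-epsilon-remains-close}, the set $\Opt_\epsilon(k_\epsilon, t_0, x_0)$ is nonempty.

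For the reverse inequality and for the identification of optimal trajectories, take any $\gamma \in \Opt_\epsilon(k_\epsilon, t_0, x_0)$. Since $x_0 \in \bar\Omega$ and $\epsilon < \epsilon_0$, Proposition~\ref{gamma-eps in Omeg} yields $\gamma(t) \in \bar\Omega$ for every $t \geq 0$. Consequently $k_\epsilon(t, \gamma(t)) = k(t, \gamma(t))$ for every $t \geq 0$, the associated control $u$ satisfies \eqref{General control sys}, and $\gamma$ (seen as a curve in $\bar\Omega$) belongs to $\Adm(k)$ with $\gamma(t_0) = x_0$. Its first exit time from $\bar\Omega$ is $\varphi_\epsilon(t_0, x_0)$, hence $\varphi(t_0, x_0) \leq \varphi_\epsilon(t_0, x_0)$; combining with the first step, $\varphi_\epsilon(t_0, x_0) = \varphi(t_0, x_0)$. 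The same curve $\gamma$ therefore reaches $\Gamma$ from $(t_0, x_0)$ in time $\varphi(t_0, x_0)$ and is constant on $[0, t_0]$ and $[t_0 + \varphi(t_0, x_0), +\infty)$, so $\gamma \in \Opt(k, t_0, x_0)$. Conversely, if $\gamma \in \Opt(k, t_0, x_0)$, then $\gamma$ takes values in $\bar\Omega$, so as above it lies in $\Adm_\epsilon(k_\epsilon)$ with exit time $\varphi(t_0, x_0) = \varphi_\epsilon(t_0, x_0)$, hence $\gamma \in \Opt_\epsilon(k_\epsilon, t_0, x_0)$.

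The only genuine difficulty here is Proposition~\ref{gamma-eps in Omeg}, which was established beforehand via Pontryagin's Maximum Principle, Lemma~\ref{lemm:technical}, and a careful threshold analysis on $\epsilon$; once this is in hand, the present theorem reduces to the elementary observation that $k_\epsilon \equiv k$ on $\mathbbm R_+ \times \bar\Omega$.
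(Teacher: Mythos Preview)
Your proof is correct and follows essentially the same approach as the paper's own proof: both use Proposition~\ref{gamma-eps in Omeg} to show that optimal trajectories for $\OCP_\epsilon(k_\epsilon)$ starting in $\bar\Omega$ stay in $\bar\Omega$, and then the equality $k_\epsilon = k$ on $\mathbbm R_+ \times \bar\Omega$ to pass admissible and optimal trajectories back and forth between the two problems. The paper's version is slightly more compact (it picks one optimal trajectory from each problem and compares their exit times directly), but the content is the same.
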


\begin{proof}
Let $\epsilon_0 > 0$ be as in the statement of Proposition~\ref{gamma-eps in Omeg}, $\epsilon \in (0, \epsilon_0)$, and $(t_0,x_0)\in \mathbbm R\times \bar{\Omega}$. Let $\gamma_{\epsilon}\in \Opt_\epsilon(k_\epsilon, t_0, x_0)$ and $\gamma \in \Opt(k, t_0, x_0)$, and denote $T_\epsilon = \varphi_\epsilon(t_0, x_0)$ and $T = \varphi(t_0, x_0)$. Clearly, $\gamma \in \Adm_\epsilon(k_\epsilon)$, and thus $T_{\epsilon} \le T$. On the other hand, thanks to Proposition~\ref{gamma-eps in Omeg}, $\gamma_{\epsilon} \in \Adm(k)$, which leads us to $T \le T_{\epsilon}$. Hence $T=T_{\epsilon}$, which shows that we have both $\gamma_\epsilon \in \Opt(k, t_0, x_0)$ and $\gamma \in \Opt_\epsilon(k_\epsilon, t_0, x_0)$, as required.
\end{proof}

Thanks to Theorem~\ref{thm:Opt-eps-equals-Opt}, all properties established in this section for optimal trajectories of $\OCP_\epsilon(k_\epsilon)$ also hold for optimal trajectories of $\OCP(k)$. The next statement, whose proof is an immediate consequence of Proposition~\ref{prop:consequences-of-Pontryagin-for-optimal-trajectories-of-ocp-epsilon}\ref{item:system-on-gamma-u} and Theorem~\ref{thm:Opt-eps-equals-Opt}, gathers the properties of optimal trajectories of $\OCP(k)$ that will be used in the sequel.

\begin{corollary}
\label{coro:smooth}
Consider the optimal control problem $\OCP(k)$ under assumptions \HypoOmega{}, \ref{HypoOCP-k-Bound}, and \ref{HypoOCP-k-Lip}. Let $(t_0, x_0) \in \mathbbm R_+ \times \bar\Omega$, $\gamma \in \Opt(k, t_0, x_0)$, $T = \varphi(t_0, x_0)$, and $u: \mathbbm R_+ \to \bar B$ be an optimal control associated with $\gamma$. Then, up to redefining $u$ in a set of Lebesgue measure zero, we have $\gamma \in \mathbf C^{1}([t_0, t_0 + T]; \bar\Omega)$ and $u \in \Lip([t_0, t_0 + T]; \mathbbm S^{d-1})$.
\end{corollary}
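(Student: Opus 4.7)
The plan is to deduce this corollary almost immediately from the results established for the penalized problem, simply by transferring them to the original problem via the identification $\Opt_\epsilon(k_\epsilon,t_0,x_0)=\Opt(k,t_0,x_0)$ provided by Theorem~\ref{thm:Opt-eps-equals-Opt}. First I would dispose of the trivial case $x_0\in\Gamma$: in that case $T=\varphi(t_0,x_0)=0$, so $[t_0,t_0+T]$ is a singleton and the regularity statements hold vacuously (after possibly redefining $u$ at that single point so that $\abs{u(t_0)}=1$).

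Now assume $x_0\in\bar\Omega\setminus\Gamma$. Fix $\epsilon_0>0$ as in Theorem~\ref{thm:Opt-eps-equals-Opt} and pick any $\epsilon\in(0,\epsilon_0)$. Then Theorem~\ref{thm:Opt-eps-equals-Opt} gives $\gamma\in\Opt_\epsilon(k_\epsilon,t_0,x_0)$ and $\varphi_\epsilon(t_0,x_0)=T$. The next step is to check that $u$, which by hypothesis is an optimal control associated with $\gamma$ in $\OCP(k)$, is also an optimal control associated with $\gamma$ in $\OCP_\epsilon(k_\epsilon)$: indeed, by admissibility $\gamma(t)\in\bar\Omega$ for all $t$, and by the construction \eqref{eq:defi-k-epsilon} we have $k_\epsilon(t,x)=k(t,x)$ on $\mathbbm R_+\times\bar\Omega$, so $\dot\gamma(t)=k(t,\gamma(t))u(t)=k_\epsilon(t,\gamma(t))u(t)$ for a.e.\ $t\in\mathbbm R_+$.

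With $(t_0,x_0,\gamma,T,u)$ now playing the role of the data in Proposition~\ref{prop:consequences-of-Pontryagin-for-optimal-trajectories-of-ocp-epsilon}, part~\ref{item:system-on-gamma-u} of that proposition directly yields, after a modification of $u$ on a Lebesgue-null subset of $[t_0,t_0+T]$, that $\gamma\in\mathbf{C}^1([t_0,t_0+T];\mathbbm R^d)$ and $u\in\Lip_{L_\epsilon}([t_0,t_0+T];\mathbbm S^{d-1})$. Since $\gamma(t)\in\bar\Omega$ for every $t\in[t_0,t_0+T]$ by admissibility for $\OCP(k)$, we may view $\gamma$ as a $\mathbf{C}^1$ map into $\bar\Omega$, giving exactly the conclusion of the corollary.

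There is essentially no obstacle beyond the bookkeeping above; the only point that deserves care is the identification of the two notions of ``optimal control associated with $\gamma$'' for $\OCP(k)$ and $\OCP_\epsilon(k_\epsilon)$, which works precisely because $k_\epsilon$ coincides with $k$ on $\bar\Omega$ and $\gamma$ stays inside $\bar\Omega$. All the real work has been done in Sections~\ref{sec:penalized}, so the proof amounts to invoking Theorem~\ref{thm:Opt-eps-equals-Opt} followed by Proposition~\ref{prop:consequences-of-Pontryagin-for-optimal-trajectories-of-ocp-epsilon}\ref{item:system-on-gamma-u}.
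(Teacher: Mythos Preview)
Your proposal is correct and follows precisely the approach indicated in the paper, which states that the corollary is an immediate consequence of Proposition~\ref{prop:consequences-of-Pontryagin-for-optimal-trajectories-of-ocp-epsilon}\ref{item:system-on-gamma-u} and Theorem~\ref{thm:Opt-eps-equals-Opt}. Your explicit handling of the degenerate case $x_0\in\Gamma$ and your check that the control $u$ for $\OCP(k)$ is also a control for $\OCP_\epsilon(k_\epsilon)$ (because $k_\epsilon=k$ on $\bar\Omega$ and $\gamma$ stays in $\bar\Omega$) are the only bookkeeping points, and you treat them correctly.
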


Before concluding this section, we also provide the following result on the local Lipschitz continuity of $\varphi_\epsilon$, which can be obtained as a consequence of Proposition~\ref{gamma-eps in Omeg}. For its statement, we introduce, for $\epsilon > 0$, the set
\[
\Omega_\epsilon = \{x \in \mathbbm R^d \suchthat d_\Omega(x) < \epsilon\}.
\]

\begin{proposition}
\label{prop:varphi-eps-loc-Lip}
Consider the optimal control problem $\OCP_\epsilon(k_\epsilon)$ and its value function $\varphi_\epsilon$ under assumptions \HypoOmega{}, \ref{HypoOCP-k-Bound}, and \ref{HypoOCP-k-Lip} and $k_\epsilon$ defined from $k$ as in \eqref{eq:defi-k-epsilon}. Then there exists $\epsilon_0 > 0$ such that, for every $\epsilon \in (0, \epsilon_0)$, the value function $\varphi_\epsilon: \mathbbm R_+ \times \Omega_\epsilon \to \mathbbm R_+$ of $\OCP_\epsilon(k_\epsilon)$ is locally Lipschitz continuous. More precisely, for every $\eta \in [0, \epsilon)$, $\varphi_\epsilon$ is Lipschitz continuous on $\mathbbm R_+ \times \bar\Omega_\eta$.
\end{proposition}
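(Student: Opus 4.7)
The approach is to adapt the argument of Lemma~\ref{lemm-varphi-Lipschitz} to the penalized problem $\OCP_\epsilon(k_\epsilon)$, exploiting two key facts that hold on $\bar\Omega_\eta$ for $\eta \in [0, \epsilon)$: first, by \eqref{eq:defi-k-epsilon}, we have the lower bound $k_\epsilon(t, x) \geq K_{\min}(1 - \eta/\epsilon) > 0$ for every $(t, x) \in \mathbbm R_+ \times \bar\Omega_\eta$; second, by Proposition~\ref{gamma-eps in Omeg}, provided $\epsilon < \epsilon_0$ for the $\epsilon_0$ given there, any optimal trajectory of $\OCP_\epsilon(k_\epsilon)$ starting at $x_0 \in \bar\Omega_\eta$ satisfies $\gamma(t) \in \bar\Omega_{d_\Omega(x_0)} \subset \bar\Omega_\eta$ for all $t \geq 0$. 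Together, these say that, as far as optimizers issuing from $\bar\Omega_\eta$ are concerned, $k_\epsilon$ behaves as if it had a uniform positive lower bound, which is precisely the regime of Lemma~\ref{lemm-varphi-Lipschitz}.

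First I would establish a local Lipschitz estimate in $x$ for fixed $t_0$. Setting $\eta' = (\eta + \epsilon)/2$ and $\tilde K_{\min} = K_{\min}(1 - \eta'/\epsilon) > 0$, pick $x_0, x_1 \in \bar\Omega_\eta$ with $\abs{x_0 - x_1} < \eta' - \eta$. The $1$-Lipschitz property of $d_\Omega$ guarantees that the segment $[x_0, x_1]$ lies in $\bar\Omega_{\eta'}$, so $k_\epsilon$ is bounded below by $\tilde K_{\min}$ there. Given $\gamma_0 \in \Opt_\epsilon(k_\epsilon, t_0, x_0)$, I would mimic the construction in the proof of Lemma~\ref{lemm-varphi-Lipschitz}: define a trajectory $\gamma_1$ which remains at $x_1$ on $[0, t_0]$, travels along the segment from $x_1$ to $x_0$ at constant speed $\tilde K_{\min}$ during an interval $[t_0, t_1]$ of length at most $\abs{x_0 - x_1}/\tilde K_{\min}$, and then tracks $\gamma_0$ on $[t_1, +\infty)$ through a time rescaling $\phi$ solving $\dot\phi(t) = k_\epsilon(t, \gamma_0(\phi(t)))/k_\epsilon(\phi(t), \gamma_0(\phi(t)))$ with $\phi(t_1) = t_0$. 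Since $\gamma_0$ remains in $\bar\Omega_\eta$, the denominator is bounded below by a positive constant, and the Grönwall-type estimate from Lemma~\ref{lemm-varphi-Lipschitz} applies verbatim (with $L$ replaced by the Lipschitz constant $L_\epsilon = L + K_{\max}/\epsilon$ of $k_\epsilon$ and with $K_{\min}$ replaced by $\tilde K_{\min}$), producing a local Lipschitz bound $\abs{\varphi_\epsilon(t_0, x_0) - \varphi_\epsilon(t_0, x_1)} \leq C \abs{x_0 - x_1}$ for a constant $C$ depending on $\eta$, $\epsilon$, $L$, $K_{\max}$, and an \emph{a priori} upper bound $T_\eta$ on $\varphi_\epsilon$ over $\bar\Omega_\eta$.

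The \emph{a priori} bound $T_\eta$ is obtained by constructing explicit admissible trajectories: from any $x \in \bar\Omega_\eta$, one can move to the nearest point in $\bar\Omega$ along a straight segment (which stays in $\bar\Omega_\eta$ by $1$-Lipschitzianity of $d_\Omega$) and then follow a curve in $\bar\Omega$ to $\Gamma$ provided by \ref{HypoOmegaGeodesic}, all at speed at least $\tilde K_{\min}$. Global Lipschitz continuity in $x$ on the compact set $\bar\Omega_\eta$ then follows at once from the local estimate: when $\abs{x_0 - x_1} \geq \eta' - \eta$, the trivial bound $\abs{\varphi_\epsilon(t_0, x_0) - \varphi_\epsilon(t_0, x_1)} \leq 2 T_\eta \leq \frac{2 T_\eta}{\eta' - \eta}\abs{x_0 - x_1}$ yields a Lipschitz constant of $\max\bigl(C,\, 2T_\eta/(\eta' - \eta)\bigr)$. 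Finally, joint Lipschitz continuity in $(t, x)$ on $\mathbbm R_+ \times \bar\Omega_\eta$ is deduced from the space-Lipschitz bound and the dynamic programming principle \eqref{eq:DPP-epsilon}, following the argument used in the proof of Proposition~\ref{varphi is Lipschitz} to pass from Lemma~\ref{lemm-varphi-Lipschitz} to joint Lipschitz continuity.

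The main obstacle is the degeneracy of $k_\epsilon$ near $\partial\Omega_\epsilon$, which rules out a single Lipschitz constant valid on all of $\Omega_\epsilon$ and forces the restriction to $\bar\Omega_\eta$ with $\eta < \epsilon$. The margin $\eta < \eta' < \epsilon$ is essential: it separates the set $\bar\Omega_\eta$ containing the competing points from the larger $\bar\Omega_{\eta'}$ that must absorb the interpolating segment, and keeps both strictly inside $\Omega_\epsilon$. Without Proposition~\ref{gamma-eps in Omeg}, the optimal trajectory $\gamma_0$ could drift outward into a region where $k_\epsilon$ vanishes and the Grönwall step would break; with that proposition at hand, the proof reduces essentially to a parameter-tracking variant of Lemma~\ref{lemm-varphi-Lipschitz}.
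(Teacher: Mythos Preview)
Your proposal is correct and rests on the same two ingredients as the paper's proof: the uniform lower bound $k_\epsilon \geq K_{\min}(1-\eta/\epsilon)$ on $\bar\Omega_\eta$, and Proposition~\ref{gamma-eps in Omeg}, which confines optimal trajectories issued from $\bar\Omega_\eta$ to $\bar\Omega_\eta$. Where you differ is in the packaging. You redo the construction of Lemma~\ref{lemm-varphi-Lipschitz} by hand (segment plus time-rescaling, Gr\"onwall, then a local-to-global step and finally the dynamic programming argument of Proposition~\ref{varphi is Lipschitz}). The paper instead introduces an auxiliary \emph{state-constrained} minimal-time problem $\widehat\OCP$ on $\bar\Omega_\eta$ with dynamics $k_\epsilon$, observes that this problem satisfies \ref{HypoOmega}--\ref{HypoOmegaC11}, \ref{HypoOCP-k-Bound}, \ref{HypoOCP-k-Lip} with the modified constants $\widehat K_{\min} = (1-\eta/\epsilon)K_{\min}$ and $\widehat L = L + K_{\max}/\epsilon$, and invokes Proposition~\ref{varphi is Lipschitz} directly to get that its value function $\widehat\varphi$ is Lipschitz on $\mathbbm R_+ \times \bar\Omega_\eta$. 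The proof then finishes by showing $\widehat\varphi = \varphi_\epsilon$ on this set, via the same two-sided comparison used in Theorem~\ref{thm:Opt-eps-equals-Opt}: every admissible curve for $\widehat\OCP$ is admissible for $\OCP_\epsilon(k_\epsilon)$, giving $\varphi_\epsilon \leq \widehat\varphi$, and conversely Proposition~\ref{gamma-eps in Omeg} forces $\OCP_\epsilon$-optimizers from $\bar\Omega_\eta$ to stay in $\bar\Omega_\eta$, giving $\widehat\varphi \leq \varphi_\epsilon$. This route is shorter---it avoids the $\eta' = (\eta+\epsilon)/2$ margin, the separate local/global split, and the explicit replay of the Gr\"onwall step---at the small cost of checking that $\Omega_\eta$ inherits the $\mathbf C^{1,1}$ boundary regularity \ref{HypoOmegaC11} for $\epsilon_0$ small enough. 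Your more explicit approach has the minor advantage of making the dependence of the Lipschitz constant on $\eta$ and $\epsilon$ visible.
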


\begin{proof}
Let $\epsilon_0 > 0$ be as in the statement of Proposition~\ref{gamma-eps in Omeg} and fix $\epsilon \in (0, \epsilon_0)$ and $\eta \in [0, \epsilon)$. We consider the optimal control problem $\widehat\OCP$ defined as follows: given $(t_0, x_0) \in \mathbbm R_+ \times \bar\Omega_\eta$, minimize $\tau(t_0, \gamma)$ over all Lipschitz continuous curves $\gamma: \mathbbm R_+ \to \bar\Omega_\eta$ satisfying \eqref{eq:penalized-control-system} for some measurable function $u: \mathbbm R_+ \to \bar B$ and $\gamma(t_0) = x_0$. We denote by $\widehat\varphi: \mathbbm R_+ \times \bar\Omega_\eta \to \mathbbm R_+$ the value function of $\widehat\OCP$ and by $\widehat\Opt(k_\epsilon, t_0, x_0)$ the set of optimal trajectories for $(t_0, x_0)$, with the convention that $\gamma \in \widehat\Opt(k_\epsilon, t_0, x_0)$ must satisfy $\gamma(t) = x_0$ for $t \in [0, t_0]$ and $\gamma(t) = \gamma(t_0 + \widehat\varphi(t_0, x_0))$ for $t \in [t_0 + \widehat\varphi(t_0, x_0), +\infty)$.

Note that $\widehat\OCP$ is a minimal-time optimal control problem with dynamics defined by $k_\epsilon$ and with the state constraint $\gamma(t) \in \bar\Omega_\eta$ for every $t \geq 0$. In particular, $\widehat\OCP$ coincides with the optimal control problem $\OCP(k)$ if one replaces $\Omega$ by $\Omega_\eta$ and $k$ by the restriction of $k_\epsilon$ to $\mathbbm R_+ \times \bar\Omega_\eta$, and assumptions \HypoOmega{}, \ref{HypoOCP-k-Bound}, and \ref{HypoOCP-k-Lip} are satisfied for $\widehat\OCP$, with $K_{\min}$ in \ref{HypoOCP-k-Bound} replaced by $\widehat K_{\min} = \left(1 - \frac{\eta}{\epsilon}\right)K_{\min}$ and $L$ in \ref{HypoOCP-k-Lip} replaced by $\widehat L = L_\epsilon = L + \frac{K_{\max}}{\epsilon}$. Hence, all properties of Section~\ref{sec:prelim} apply to $\widehat\OCP$ and thus, by Proposition~\ref{varphi is Lipschitz}, $\widehat\varphi$ is Lipschitz continuous on $\mathbbm R_+ \times \bar\Omega_\eta$.

In order to conclude, it suffices to show that $\varphi_\epsilon$ and $\widehat\varphi$ coincide on this set. For this purpose, we proceed as in the proof of Theorem~\ref{thm:Opt-eps-equals-Opt}. Let $(t_0,x_0) \in \mathbbm R\times \bar{\Omega}_\eta$. Let $\gamma_{\epsilon}\in \Opt_\epsilon(k_\epsilon, t_0, x_0)$ and $\widehat\gamma \in \widehat\Opt(k_\epsilon, t_0, x_0)$, and denote $T_\epsilon = \varphi_\epsilon(t_0, x_0)$ and $\widehat T = \widehat\varphi(t_0, x_0)$. Clearly, $\widehat\gamma \in \Adm_\epsilon(k_\epsilon)$, and thus $T_{\epsilon} \le \widehat T$. On the other hand, thanks to Proposition~\ref{gamma-eps in Omeg}, we have $d_\Omega(\gamma_\epsilon(t)) \leq d_\Omega(x_0) \leq \eta$ for every $t \geq 0$, showing that $\gamma_\epsilon$ takes values in $\bar\Omega_\eta$. Hence $\gamma_\epsilon$ is an admissible trajectory for the optimal control problem $\widehat\OCP$, showing that $\widehat T \le T_{\epsilon}$. Hence $\widehat T=T_{\epsilon}$, as required.
\end{proof}

\subsection{Boundary condition of the value function}
\label{sec:boundary-varphi}

Proposition~\ref{PropOCP-HJ} in Section~\ref{sec:prelim} asserts that $\varphi$ is a viscosity solution of the Hamilton--Jacobi equation \eqref{H-J equation} and also provides its boundary condition on $\mathbbm R_+ \times \Gamma$. The only information provided in Proposition~\ref{PropOCP-HJ} on $\varphi$ on the part of the boundary $\mathbbm R_+ \times (\partial\Omega \setminus \Gamma)$ is that $\varphi$ is a viscosity supersolution there. The main result of this section, Theorem~\ref{Thm viscosity boundary cond}, provides additional information on $\varphi$ on $\mathbbm R_+ \times (\partial\Omega \setminus \Gamma)$. We recall that, given $x \in \partial\Omega$, $\mathbf n(x)$ denotes the outward unit normal vector to $\bar\Omega$ at $x$.

\begin{theorem}\label{Thm viscosity boundary cond}
Consider the optimal control problem $\OCP(k)$ and its value function $\varphi$ under assumptions \HypoOmega{}, \ref{HypoOCP-k-Bound}, and \ref{HypoOCP-k-Lip}. Then $\varphi$ satisfies $\nabla \varphi(t, x) \cdot \mathbf n(x) \ge 0$ in the viscosity supersolution sense for every $(t, x) \in \mathbbm R_+ \times (\partial\Omega \setminus \Gamma)$.
\end{theorem}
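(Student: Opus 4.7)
The plan is to couple the dynamic programming principle with the regularity of optimal trajectories established in Corollary~\ref{coro:smooth}, which itself relies on the penalization framework of Section~\ref{sec:penalized}. Let $\psi \in \mathbf{C}^1$ be such that $\varphi - \psi$ attains a local minimum at $(t_0, x_0) \in \mathbbm{R}_+ \times (\partial\Omega \setminus \Gamma)$ with respect to $\mathbbm{R}_+ \times \bar\Omega$, and, without loss of generality, assume $\psi(t_0, x_0) = \varphi(t_0, x_0)$. Since $x_0 \notin \Gamma$, the time $T := \varphi(t_0, x_0)$ is strictly positive.

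First I would fix an optimal trajectory $\gamma \in \Opt(k, t_0, x_0)$, whose existence is given by Proposition~\ref{PropOCP-1}. By Corollary~\ref{coro:smooth}, $\gamma \in \mathbf{C}^1([t_0, t_0+T]; \bar\Omega)$ and the associated optimal control $u$ belongs to $\Lip([t_0, t_0+T]; \mathbbm{S}^{d-1})$. Since $\gamma$ remains in $\bar\Omega$ and $\gamma(t_0) = x_0 \in \partial\Omega$, the map $t \mapsto d^\pm_{\partial\Omega}(\gamma(t))$ vanishes at $t_0$ and is nonpositive on $[t_0, t_0 + T]$; exploiting the $\mathbf{C}^{1,1}$ regularity of $d^\pm_{\partial\Omega}$ near $\partial\Omega$ together with $\nabla d^\pm_{\partial\Omega}(x_0) = \mathbf{n}(x_0)$ (Proposition~\ref{prop:OmegaC11}), differentiation from the right at $t_0$ yields the inward constraint $u(t_0) \cdot \mathbf{n}(x_0) \leq 0$.

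Next, the dynamic programming principle (Proposition~\ref{PropOCP-1}\ref{PropOCP-DPP}) along the optimal $\gamma$ gives $\varphi(t_0 + h, \gamma(t_0 + h)) = \varphi(t_0, x_0) - h$ for $h \in [0, T]$; combined with the local minimum condition $\psi \leq \varphi$ on $\bar\Omega$ near $(t_0, x_0)$, this produces $\psi(t_0 + h, \gamma(t_0 + h)) - \psi(t_0, x_0) \leq -h$, and Taylor expansion as $h \to 0^+$ yields
\[
\partial_t \psi(t_0, x_0) + k(t_0, x_0)\,\nabla \psi(t_0, x_0) \cdot u(t_0) \leq -1. \qquad (\ast)
\]
The key remaining step is to identify the initial optimal direction as $u(t_0) = -\nabla\psi(t_0, x_0)/\abs{\nabla\psi(t_0, x_0)}$ when $\nabla\psi(t_0, x_0) \neq 0$. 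For this I would use Proposition~\ref{PropOCP-OptimalControl}, which gives $u(t) = -\nabla\varphi(t, \gamma(t))/\abs{\nabla\varphi(t, \gamma(t))}$ at almost every $t > t_0$ at which $\varphi$ is differentiable (such $t$ form a full-measure set by Lipschitz continuity of $\varphi$ from Proposition~\ref{varphi is Lipschitz}), together with the continuity of $u$ from Corollary~\ref{coro:smooth}, and pass to the limit $t \to t_0^+$; alternatively, one may invoke the Pontryagin costate $p$ of Proposition~\ref{Conse Pontryagin} applied to the penalized problem, which enforces $u(t_0) = p(t_0)/\abs{p(t_0)}$, and use the relation of $p(t_0)$ to the subgradients of $\varphi_\epsilon = \varphi$ at $(t_0, x_0)$. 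Combining this identification with the inward constraint $u(t_0) \cdot \mathbf{n}(x_0) \leq 0$ gives $-\nabla\psi(t_0, x_0) \cdot \mathbf{n}(x_0)/\abs{\nabla\psi(t_0, x_0)} \leq 0$, whence $\nabla\psi(t_0, x_0) \cdot \mathbf{n}(x_0) \geq 0$; the case $\nabla\psi(t_0, x_0) = 0$ is immediate.

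The main obstacle is exactly this identification of $u(t_0)$ with the steepest-descent direction of $\psi$: the inequality $(\ast)$ combined with the viscosity supersolution inequality $-\partial_t\psi + k\abs{\nabla\psi} \geq 1$ from Proposition~\ref{PropOCP-HJ} yields only the Cauchy--Schwarz bound $-\nabla\psi(t_0, x_0) \cdot u(t_0) \leq \abs{\nabla\psi(t_0, x_0)}$, which does not by itself force equality. Turning this into a genuine identification requires careful use of the structure of the subdifferential of $\varphi$ at $(t_0, x_0)$ (into which $(\partial_t\psi, \nabla\psi)(t_0, x_0)$ necessarily falls) together with the trajectory-wise characterization of Proposition~\ref{PropOCP-OptimalControl}, or equivalently an analysis of the Pontryagin costate at the initial time obtained via the penalized problem of Section~\ref{sec:penalized}.
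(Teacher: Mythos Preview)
Your setup is sound and aligns with the paper: take an optimal $\gamma$, use Corollary~\ref{coro:smooth} for $\mathbf C^1$ regularity, and derive the forward inequality $(\ast)$ via the dynamic programming principle. The gap is exactly where you flag it: the identification $u(t_0) = -\nabla\psi(t_0,x_0)/\abs{\nabla\psi(t_0,x_0)}$ for an \emph{arbitrary} lower test function $\psi$. Neither of your two proposed routes closes it. Passing to the limit of $-\nabla\varphi(t,\gamma(t))/\abs{\nabla\varphi(t,\gamma(t))}$ along $t\to t_0^+$ only pins down $u(t_0)$ as that limit; it does not connect it to $\nabla\psi(t_0,x_0)$, since $\psi$ is just one among possibly many subtangents of $\varphi$ at $(t_0,x_0)$, and there is no reason the trajectory-wise normalized gradient should converge to \emph{that particular} direction. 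The costate alternative is likewise inconclusive: nothing in Section~\ref{sec:penalized} identifies $p(t_0)$ with elements of the Fr\'echet subdifferential of $\varphi$ at $(t_0,x_0)$. Finally, combining $(\ast)$ with the supersolution inequality $-\partial_t\psi + k\abs{\nabla\psi} \geq 1$ only produces two upper bounds on $\partial_t\psi$; it cannot force equality in Cauchy--Schwarz.

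The missing idea is to argue by contradiction and exploit the hypothesis $\nabla\psi(t_0,x_0)\cdot\mathbf n(x_0) < 0$ itself to manufacture the opposite (subsolution-type) inequality. Under this hypothesis, the direction $-\nabla\psi(t_0,x_0)/\abs{\nabla\psi(t_0,x_0)}$ points \emph{outward}, so going \emph{backward} from $x_0$ along the trajectory $\dot\sigma = -k\,\nabla\psi(t_0,x_0)/\abs{\nabla\psi(t_0,x_0)}$ stays in $\bar\Omega$ for short time. Concatenating $\sigma$ on $[t_0-h,t_0]$ with $\gamma$ on $[t_0,\infty)$ gives an admissible curve from $(t_0-h,\sigma(t_0-h))$, and the dynamic programming inequality then yields, after Taylor expansion, $-\partial_t\psi(t_0,x_0) + k(t_0,x_0)\abs{\nabla\psi(t_0,x_0)} - 1 \leq 0$. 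Adding this to $(\ast)$ gives $\abs{\nabla\psi(t_0,x_0)} + \nabla\psi(t_0,x_0)\cdot u(t_0) \leq 0$, which is the equality case of Cauchy--Schwarz and forces $u(t_0) = -\nabla\psi(t_0,x_0)/\abs{\nabla\psi(t_0,x_0)}$. But then $u(t_0)\cdot\mathbf n(x_0) > 0$, contradicting the inward constraint you already derived. This backward-trajectory device is precisely what your proposal lacks.
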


\begin{proof}
We consider in this proof that $k$ is extended to a continuous function, still denoted by $k$, defined in $[-1, +\infty) \times \mathbbm R^d$ which is Lipschitz continuous with respect to its second variable, uniformly with respect to the first one. Note that, up to considering the optimal control problem $\OCP(k(\cdot - 1, \cdot))$, all of the previous results on minimal-time optimal control problems apply to $k$ with the time domain $\mathbbm R_+$ replaced by $[-1, +\infty)$

Fix $(t_0, x_0) \in \mathbbm R_+ \times (\partial\Omega \setminus \Gamma)$ and let $\xi \in \mathbf C^1(V; \mathbbm R)$ be such that $\xi(t_0, x_0) = \varphi(t_0, x_0)$ and $\xi(t, x) \leq \varphi(t, x)$ for every $(t, x) \in V$, where $V$ is a neighborhood of $(t_0, x_0)$ in $[-1, +\infty) \times \bar\Omega$. Assume, to obtain a contradiction, that $\nabla \xi(t_0, x_0) \cdot \mathbf n(x_0) < 0$. In particular, we have $\nabla\xi(t_0, x_0) \neq 0$.

Let $\gamma \in \Opt(k, t_0, x_0)$, $u$ be an optimal control for $\gamma$, and $T = \varphi(t_0, x_0)$. Since $x_0 \notin \Gamma$, we have $T > 0$ and, by Corollary~\ref{coro:smooth}, we have $\gamma \in \mathbf C^1([t_0, t_0 + T]; \bar\Omega)$ and $u \in \Lip([t_0, t_0 + T]; \mathbbm S^{d-1})$.

Let $\sigma: [-1, +\infty) \to \mathbbm R^d$ denote the solution of the initial value problem
\[
\left\{
\begin{aligned}
\dot\sigma(t) & = - k(t, \sigma(t)) \frac{\nabla\xi(t_0, x_0)}{\abs{\nabla\xi(t_0, x_0)}}, \\
\sigma(t_0) & = x_0.
\end{aligned}
\right.
\]
Since $k$ is continuous, by construction, $\sigma \in \mathbf C^1([-1, +\infty), \mathbbm R^d)$. We claim that there exists $\epsilon \in (0, 1]$ such that $\sigma(t) \in \bar\Omega$ and $(t, \sigma(t)) \in V$ for every $t \in [t_0 - \epsilon, t_0]$. Indeed, let $W$ be as in Proposition~\ref{prop:OmegaC11}\ref{item:W-dpm} and notice that, since $x_0 \in \partial\Omega$, there exists $\epsilon_0 \in (0, 1]$ such that $\sigma(t) \in W$ for every $t \in [t_0 - \epsilon_0, t_0 + \epsilon_0]$. Moreover, up to reducing $\epsilon_0$, we also have $(t, \sigma(t)) \in V$ for every $t \in [t_0 - \epsilon_0, t_0 + \epsilon_0]$. Consider the map $\beta: t \mapsto d_{\partial\Omega}^{\pm}(\sigma(t))$, which is Lipschitz continuous on $[t_0 - \epsilon_0, t_0 + \epsilon_0]$. By differentiating, we have, for a.e.\ $t \in [t_0 - \epsilon_0, t_0 + \epsilon_0]$,
\begin{equation}
\label{eq:dot-beta}
    \dot\beta(t) = \Dot{\sigma}(t) \cdot \nabla d_{\partial\Omega}^{\pm}(\sigma(t))=-k(t, \sigma(t)) \nabla d_{\partial\Omega}^{\pm}(\sigma(t)) \cdot \frac{\nabla \xi(t_0, x_0)}{\abs*{\nabla \xi(t_0, x_0)}}.
\end{equation}
Recalling that $\mathbf n(x_0) = \nabla d_{\partial\Omega}^{\pm}(x_0)$ and using the continuity of $\nabla d_{\partial\Omega}$, we deduce from the fact that $\nabla \xi(t_0, x_0) \cdot \mathbf n(x_0) < 0$ that there exists $\epsilon \in (0, \epsilon_0]$ such that $\nabla d_{\partial\Omega}^{\pm}(\sigma(t)) \cdot \nabla \xi(t_0, x_0) < 0$ for every $t \in [t_0 - \epsilon, t_0 + \epsilon]$. Hence, by \eqref{eq:dot-beta}, we have $\dot\beta(t) > 0$ for a.e.\ $t \in [t_0 - \epsilon, t_0 + \epsilon]$. Since $\beta(t_0) = 0$, we thus have $\beta(t) \leq 0$ for $t \in [t_0 - \epsilon, t_0]$, which implies that $\sigma(t) \in \bar\Omega$ for every $t \in [t_0 - \epsilon, t_0]$. Since $\epsilon \in (0, \epsilon_0]$, we also have in particular that $(t, \sigma(t)) \in V$ for every $t \in [t_0 - \epsilon, t_0]$.

Define $\tilde{\gamma}:[-1, +\infty) \to \bar{\Omega}$ by
\begin{equation*}
\tilde\gamma(t) = \begin{dcases*}
\sigma(t_0 - \epsilon), & if $t \in [-1, t_0 - \epsilon)$, \\
\sigma(t), & if $t \in [t_0 - \epsilon, t_0)$, \\
\gamma(t), & if $t \geq t_0$.
\end{dcases*}
\end{equation*}
Hence $\tilde\gamma$ is an admissible trajectory for $k$ and thus, by Proposition~\ref{PropOCP-1}\ref{PropOCP-DPP}, for every $h \in [0, \epsilon]$, we have $\varphi(t_0, x_0) \ge \varphi(t_0 - h, \tilde{\gamma}(t_0 - h)) - h$, and thus $\xi(t_0, x_0) \ge \xi(t_0 - h, \tilde{\gamma}(t_0 - h)) - h$. Notice also that, for $h \in [0, \epsilon]$, we have $\tilde\gamma(t_0 - h) = \sigma(t_0 - h)$, yielding that $\xi(t_0, x_0) \ge \xi(t_0 - h, \sigma(t_0 - h)) - h$. Since $\xi$ and $\sigma$ are $\mathbf C^1$ functions, for $h \in [0, \epsilon]$, we have
\[
\xi(t_0 - h, \sigma(t_0 - h)) = \xi(t_0, x_0) - h \partial_t \xi(t_0, x_0) - h \nabla \xi(t_0, x_0) \cdot \dot\sigma(t_0) + o(h).
\]
Inserting this fact in the inequality $\xi(t_0, x_0) \ge \xi(t_0 - h, \sigma(t_0 - h)) - h$ for $h \in (0, \epsilon]$, using the definition of $\sigma$, dividing by $h$ and letting $h \to 0^+$, we deduce that
\begin{equation}\label{tilde gamma diff}
-\partial_t\xi(t_0, x_0) + k(t_0, x_0) \abs{\nabla \xi(t_0, x_0)} - 1 \le 0.
\end{equation}

Since $\gamma \in \Opt(k, t_0, x_0)$, we have, by Proposition~\ref{PropOCP-1}\ref{PropOCP-DPP}, that $\varphi(t_0, x_0) = \varphi(t_0 + h, \gamma(t_0 + h)) + h$ for every $h \in [0, T]$, and thus $\xi(t_0, x_0) \geq \xi(t_0 + h, \gamma(t_0 + h)) + h$. We also have, for $h \in [0, T]$,
\[
\xi(t_0 + h, \gamma(t_0 + h)) = \xi(t_0, x_0) + h \partial_t \xi(t_0, x_0) + h \nabla \xi(t_0, x_0) \cdot \dot\gamma(t_0^+) + o(h),
\]
where we use the fact that $\gamma \in \mathbf C^1([t_0, t_0 + T]; \bar\Omega)$ and $\dot\gamma(t_0^+)$ denotes the limit of $\dot\gamma(t)$ as $t \to t_0^+$. Using the fact that $\gamma$ satisfies the first equation of \eqref{General control sys}, we deduce as before, dividing by $h$ and letting $h \to 0^+$, that
\[
\partial_t \xi(t_0, x_0) + k(t_0, x_0) \nabla \xi(t_0, x_0) \cdot u(t_0) + 1 \leq 0.
\]
Adding with \eqref{tilde gamma diff}, we deduce that $\nabla \xi(t_0, x_0) \cdot u(t_0) + \abs{\nabla \xi(t_0, x_0)} \leq 0$ and, since $u(t_0) \in \bar B$, this implies that $u(t_0) = -\frac{\nabla \xi(t_0, x_0)}{\abs{\nabla \xi(t_0, x_0)}}$.

Consider now the function $\alpha: t \mapsto d^{\pm}_{\partial\Omega}(\gamma(t))$ defined on $[t_0, t_0 + T]$. Since $\gamma(t_0) = x_0 \in \partial\Omega$, there exists $\delta_0 \in (0, T]$ such that $\gamma(t) \in W$ for every $t \in [t_0, t_0 + \delta_0]$, and thus $\alpha$ is $\mathbf C^1$ on $[t_0, t_0 + \delta_0]$, with
\[
\dot\alpha(t) = \nabla d^{\pm}_{\partial\Omega}(\gamma(t)) \cdot \dot\gamma(t) = k(t, \gamma(t)) \nabla d^{\pm}_{\partial\Omega}(\gamma(t)) \cdot u(t).
\]
In particular, $\dot\alpha(t_0) = -k(t_0, x_0) \mathbf n(x_0) \cdot \frac{\nabla \xi(t_0, x_0)}{\abs{\nabla \xi(t_0, x_0)}} > 0$, showing that there exists $\delta \in (0, \delta_0]$ such that $\alpha(t) > \alpha(t_0) = 0$ for every $t \in (t_0, t_0 + \delta]$. From the definition of $\alpha$, this means that $\gamma(t) \notin \bar\Omega$ for $t \in (t_0, t_0 + \delta]$, which is a contradiction since $\gamma \in \Opt(k, t_0, x_0)$. This contradiction establishes that $\nabla\xi(t_0, x_0) \cdot \mathbf n(x_0) \geq 0$, as required.
\end{proof}

\begin{remark}
\label{remk:boundary-differentiable}
Under the assumptions of Theorem~\ref{Thm viscosity boundary cond}, if $(t_0, x_0) \in \mathbbm R_+ \times (\partial\Omega \setminus \Gamma)$ and $\varphi$ is differentiable at $(t_0, x_0)$, then the inequality $\nabla\varphi(t_0, x_0) \cdot \mathbf n(x_0) \geq 0$ holds in the classical sense.
\end{remark}

Proposition~\ref{PropOCP-HJ} asserts that the value function $\varphi$ is a viscosity solution of the Hamilton--Jacobi equation \eqref{H-J equation} in $\mathbbm R_+ \times (\Omega \setminus \Gamma)$ and thus, as a consequence of standard properties of viscosity solutions, \eqref{H-J equation} is satisfied in the classical sense at all points $(t, x) \in \mathbbm R_+ \times (\Omega \setminus \Gamma)$ at which $\varphi$ is differentiable. Thanks to Theorem~\ref{Thm viscosity boundary cond}, we can prove that \eqref{H-J equation} is actually satisfied at all points $(t, x) \in \mathbbm R_+ \times (\bar\Omega \setminus \Gamma)$ at which $\varphi$ is differentiable.

\begin{proposition}
\label{prop:HJ-satisfied-classical-sense}
Consider the optimal control problem $\OCP(k)$ and its value function $\varphi$ under assumptions \HypoOmega{}, \ref{HypoOCP-k-Bound}, and \ref{HypoOCP-k-Lip}. Let $(t_0, x_0) \in \mathbbm R_+ \times (\bar\Omega \setminus \Gamma)$ be such that $\varphi$ is differentiable at $(t_0, x_0)$. Then
\[
-\partial_t \varphi(t_0, x_0) + \abs{\nabla \varphi(t_0, x_0)} k(t_0, x_0) - 1 = 0.
\]
\end{proposition}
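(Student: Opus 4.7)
My plan is to establish the two inequalities $-\partial_t \varphi(t_0, x_0) + k(t_0, x_0) \abs{\nabla\varphi(t_0, x_0)} - 1 \geq 0$ and $\leq 0$ separately, combining the viscosity supersolution property with an infinitesimal expansion along an optimal trajectory. Note that the case $x_0 \in \Omega$ is automatic from the viscosity sub- and supersolution properties provided by Proposition~\ref{PropOCP-HJ}, so the real content is when $x_0 \in \partial\Omega \setminus \Gamma$, where only the supersolution property is available; nonetheless, the argument below treats both cases simultaneously.

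For the $\geq$ direction, I would invoke Proposition~\ref{PropOCP-HJ}, which asserts that $\varphi$ is a viscosity supersolution of \eqref{H-J equation} on all of $\mathbbm R_+ \times (\bar\Omega \setminus \Gamma)$. At any point where $\varphi$ is differentiable, its differential lies in its Fréchet subdifferential, and so a standard smoothing construction produces a $\mathbf C^1$ test function $\xi$ such that $\varphi - \xi$ attains a local minimum at $(t_0, x_0)$ with $(\partial_t \xi, \nabla \xi)(t_0, x_0) = (\partial_t \varphi, \nabla \varphi)(t_0, x_0)$. Plugging $\xi$ into the definition of supersolution yields the desired inequality $-\partial_t \varphi(t_0, x_0) + k(t_0, x_0) \abs{\nabla\varphi(t_0, x_0)} - 1 \geq 0$.

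For the reverse inequality, I would pick an optimal trajectory $\gamma \in \Opt(k, t_0, x_0)$, whose existence is granted by Proposition~\ref{PropOCP-1}\ref{PropOCP-ExistsOptimal}. Since $x_0 \notin \Gamma$, one has $T := \varphi(t_0, x_0) > 0$, and Corollary~\ref{coro:smooth} ensures $\gamma \in \mathbf C^1([t_0, t_0 + T]; \bar\Omega)$ with an associated control $u$ satisfying $\dot\gamma(t_0) = k(t_0, x_0) u(t_0)$ and $\abs{u(t_0)} \leq 1$. The dynamic programming identity of Proposition~\ref{PropOCP-1}\ref{PropOCP-DPP} gives $\varphi(t_0 + h, \gamma(t_0 + h)) = \varphi(t_0, x_0) - h$ for all $h \in [0, T]$. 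Expanding the left-hand side using differentiability of $\varphi$ at $(t_0, x_0)$, the expansion $\gamma(t_0 + h) = x_0 + h \dot\gamma(t_0) + o(h)$, and the Lipschitz bound $\abs{\gamma(t_0 + h) - x_0} = O(h)$ to absorb the remainder, one obtains
\[
h\left[\partial_t \varphi(t_0, x_0) + k(t_0, x_0) \nabla\varphi(t_0, x_0) \cdot u(t_0) + 1\right] = o(h).
\]
Dividing by $h$ and letting $h \to 0^+$ gives $\partial_t \varphi(t_0, x_0) + k(t_0, x_0) \nabla\varphi(t_0, x_0) \cdot u(t_0) = -1$, and Cauchy--Schwarz together with $\abs{u(t_0)} \leq 1$ yields $k(t_0, x_0) \nabla\varphi(t_0, x_0) \cdot u(t_0) \geq -k(t_0, x_0) \abs{\nabla\varphi(t_0, x_0)}$, hence $-\partial_t \varphi(t_0, x_0) + k(t_0, x_0) \abs{\nabla\varphi(t_0, x_0)} - 1 \leq 0$, which combined with the first step concludes the proof.

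The principal obstacle is really the $\leq$ direction when $x_0 \in \partial\Omega \setminus \Gamma$: the viscosity subsolution property of Proposition~\ref{PropOCP-HJ} is \emph{not} available there, so one cannot short-circuit the argument via the usual test-function trick. What makes the direct route via the optimal trajectory work is precisely the $\mathbf C^1$ regularity of $\gamma$ on $[t_0, t_0 + T]$ given by Corollary~\ref{coro:smooth}, which itself rests on the penalization machinery of Section~\ref{sec:penalized}; without this one-sided $\mathbf C^1$ regularity at $t_0$, the right-derivative $\dot\gamma(t_0^+)$ of a merely Lipschitz curve would not give a sufficiently sharp Taylor expansion of $\varphi \circ (\mathrm{id}, \gamma)$ to produce the needed identity.
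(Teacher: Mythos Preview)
Your proof is correct, and for the $\leq$ inequality it takes a genuinely different route from the paper's. The paper's argument for that direction does \emph{not} use the optimal trajectory: instead it fixes an arbitrary inward-pointing direction $u \in D$, runs an admissible (not optimal) trajectory in direction $u$, applies the dynamic programming \emph{inequality} to get $-1 \leq \partial_t\varphi + k\,\nabla\varphi\cdot u$, and then minimizes over $u \in \bar D$. The catch is that this infimum equals $-k|\nabla\varphi|$ only if $-\nabla\varphi/|\nabla\varphi| \in \bar D$, and establishing that requires the boundary condition $\nabla\varphi(t_0,x_0)\cdot\mathbf n(x_0) \geq 0$ from Theorem~\ref{Thm viscosity boundary cond} (via Remark~\ref{remk:boundary-differentiable}). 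Your argument, by contrast, works with the optimal trajectory and the dynamic programming \emph{equality}, so the identity $\partial_t\varphi + k\,\nabla\varphi\cdot u(t_0) = -1$ comes for free and Cauchy--Schwarz finishes without ever needing to know which half-space $\nabla\varphi$ points into. Both approaches ultimately rest on Corollary~\ref{coro:smooth} (the $\mathbf C^1$ regularity from the penalization scheme)---yours directly, the paper's indirectly through the proof of Theorem~\ref{Thm viscosity boundary cond}---but your route is logically leaner in that it makes Proposition~\ref{prop:HJ-satisfied-classical-sense} independent of Theorem~\ref{Thm viscosity boundary cond}. One small remark: Corollary~\ref{coro:smooth} actually gives $u(t_0) \in \mathbbm S^{d-1}$, so $|u(t_0)| = 1$ rather than merely $\leq 1$, though this does not affect your Cauchy--Schwarz step.
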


\begin{proof}
If $x_0 \in \Omega \setminus \Gamma$, this fact is a consequence of classical properties of viscosity solutions, and so we assume in the sequel that $x_0 \in \partial\Omega \setminus \Gamma$. By Remark~\ref{remk:boundary-differentiable}, we have that $\nabla\varphi(t_0, x_0) \cdot \mathbf n(x_0) \geq 0$. Moreover, by Proposition~\ref{PropOCP-2}\ref{PropOCP-LowerBound}, we have $\nabla\varphi(t_0, x_0) \neq 0$. Since, by Proposition~\ref{PropOCP-HJ}, $\varphi$ is a viscosity supersolution of \eqref{H-J equation} in $\mathbbm R_+ \times (\bar\Omega \setminus \Gamma)$, we have in particular that
\[
-\partial_t \varphi(t_0, x_0) + \abs{\nabla \varphi(t_0, x_0)} k(t_0, x_0) - 1 \geq 0.
\]
We are thus left to show the converse inequality.

Let $D$ be the set of unit vectors pointing to the inside of $\bar\Gamma$ at $x_0$, i.e., $D$ is the set of $u_0 \in \mathbbm S^{d-1}$ for which there exists $h_0 > 0$ such that $x_0 + h u_0 \in \bar\Omega$ for every $h \in [0, h_0]$. Notice that $\{u_0 \in \mathbbm S^{d-1} \suchthat u_0 \cdot \mathbf n(x_0) < 0\} \subset D \subset \{u_0 \in \mathbbm S^{d-1} \suchthat u_0 \cdot \mathbf n(x_0) \leq 0\}$ and thus, in particular, $\bar D = \{u_0 \in \mathbbm S^{d-1} \suchthat u_0 \cdot \mathbf n(x_0) \leq 0\}$. Since $\nabla\varphi(t_0, x_0) \cdot \mathbf n(x_0) \geq 0$, we deduce that $-\frac{\nabla\varphi(t_0, x_0)}{\abs{\nabla\varphi(t_0, x_0)}} \in \bar D$.

Let $u \in D$ and $\sigma \in \mathbf C^1(\mathbbm R_+; \mathbbm R^d)$ be the unique solution of
\begin{equation}\label{ODE traj 2}
    \left\{
    \begin{aligned}
      \Dot{\sigma}(t)& = k(t, \sigma(t)) u\\
      \sigma(t_0)&=x_0.
    \end{aligned} \right.
\end{equation}
Since $u \in D$, there exists $h_0 > 0$ such that $\sigma(t) \in \bar\Omega$ for every $t \in [t_0, t_0 + h_0]$. Let $\gamma \in \Adm(k)$ be defined by $\gamma(t) = x_0$ for $t \in [0, t_0]$, $\gamma(t) = \sigma(t)$ for $t \in [t_0, t_0 + h_0]$, and $\gamma(t) = \sigma(t_0 + h_0)$ for $t \geq t_0 + h_0$. By Proposition~\ref{PropOCP-1}\ref{PropOCP-DPP}, we have, for every $h \in [0, h_0]$,
\begin{equation}\label{admiss dynamic prog}
\varphi(t_0, x_0) \le h + \varphi(t_0 + h, \sigma(t_0+h)).
\end{equation}
One the other hand, notice that $\sigma(t_0 + h) = x_0 + h k(t_0, x_0) u + o(h)$ as $h \to 0$. Since $\varphi$ is differentiable at $(t_0, x_0)$ and Lipschitz continuous, we deduce that, as $h \to 0^+$,
\[
    \varphi(t_0+h,\sigma(t_0+h)) -\varphi(t_0,x_0) =
    h \partial_t \varphi(t_0, x_0) + h k(t_0 , x_0) \nabla\varphi(t_0, x_0) \cdot u + o(h).
\]
Dividing the above expression by $h$, using \eqref{admiss dynamic prog} and letting $h \to 0^+$, we get
\[
-1 \le \partial_t \varphi(t_0, x_0) + k(t_0, x_0) \nabla\varphi(t_0, x_0) \cdot u.
\]
Since this holds for every $u \in D$ and the right-hand side of the above inequality is continuous in $u$, we get that
\[
\partial_t \varphi(t_0, x_0) + k(t_0, x_0) \inf_{u \in \bar D} \nabla\varphi(t_0, x_0) \cdot u + 1 \geq 0.
\]
Since $-\frac{\nabla\varphi(t_0, x_0)}{\abs{\nabla\varphi(t_0, x_0)}} \in \bar D$, the above infimum is attained at $u = -\frac{\nabla\varphi(t_0, x_0)}{\abs{\nabla\varphi(t_0, x_0)}}$, yielding that
\begin{equation*}
-\partial_t \varphi(t_0, x_0) + \abs{\nabla\varphi(t_0, x_0)} k(t_0, x_0) - 1 \leq 0,
\end{equation*}
as required.
\end{proof}

\begin{remark}
At the light of Proposition~\ref{prop:HJ-satisfied-classical-sense}, a natural question is whether $\varphi$ is also a viscosity subsolution of \eqref{H-J equation} in $\mathbbm R_+ \times (\bar\Omega \setminus \Gamma)$, since this would imply the result of Proposition~\ref{prop:HJ-satisfied-classical-sense}. However, this may fail to be the case, as illustrated by the following example. Consider the case $d = 1$, $\Omega = (0, 1)$, $\Gamma = \{0\}$, and $k: \mathbbm R_+ \times [0, 1] \to \mathbbm R_+$ given by $k(t, x) = 1$ for every $(t, x) \in \mathbbm R_+ \times [0, 1]$. One easily computes that $\varphi(t, x) = x$ for every $(t, x) \in \mathbbm R_+ \times [0, 1]$. For every $\alpha \in (-\infty, 1]$, the function $\xi: \mathbbm R_+ \times [0, 1]$ defined for $(t, x) \in \mathbbm R_+ \times [0, 1]$ by $\xi(t, x) = 1 + \alpha(x - 1)$ satisfies $\xi(t, 1) = \varphi(t, 1) = 1$ and $\xi(t, x) \geq \varphi(t, x)$ for every $(t, x) \in \mathbbm R_+ \times [0, 1]$. However, for $\alpha < -1$ and $t \in \mathbbm R_+$, we have
\[
-\partial_t \xi(t, 1) + \abs{\partial_x \xi(t, 1)} k(t, 1) - 1 = \abs{\alpha} - 1 > 0,
\]
showing that $\varphi$ cannot be a viscosity subsolution of \eqref{H-J equation} at $(t, 1)$.
\end{remark}

\subsection{Characterization of optimal controls}
\label{sec:charact-opt-controls}

We now turn to the problem of characterizing the optimal control $u:\mathbbm R_+\to \bar{B}$ associated with an optimal trajectory $\gamma\in \Opt(k, t_0, x_0)$. By Proposition~\ref{PropOCP-2}\ref{PropOCP-OptimalControl}, the optimal control $u$ can be written as $u(t) = -\frac{\nabla\varphi(t,\gamma(t))}{\abs*{\nabla\varphi(t,\gamma(t)}}$ for every $t \in [t_0, t_0 + \varphi(t_0, x_0))$ such that $\varphi$ is differentiable at $(t, \gamma(t))$. For optimal control problems without state constraints, one can typically prove that $\varphi$ is a semiconcave function (see, e.g., \cite[Theorem~8.2.7]{CannarsaPiermarcoSinestrari}) and use this fact and properties of superdifferentials of semiconcave functions to deduce that $\varphi$ is indeed differentiable along optimal trajectories, except possibly at their starting and ending times (see, e.g., \cite[Theorem~8.4.6]{CannarsaPiermarcoSinestrari}). 

Since value functions of optimal control problems with state constraints may fail to be semiconcave, we propose in this section an alternative way to characterize the optimal control. In particular, we are also able to provide a characterization of the optimal control under weaker assumptions: semiconcavity results usually require the dynamics of the system to be $\mathbf C^{1, 1}$ with respect to the state, as in \cite[Theorem~8.2.7]{CannarsaPiermarcoSinestrari}, but the Lipschitz continuity assumption \ref{HypoOCP-k-Lip} is actually sufficient for our strategy. The approach we follow here is an adaptation to the case with state constraints of the characterization of optimal controls from \cite{SadeghiMulti}. We start by introducing the set of optimal directions at a point.

\begin{definition}
\label{def:U}
Consider the optimal control problem $\OCP(k)$ and assume that \HypoOmega, \ref{HypoOCP-k-Bound}, and \ref{HypoOCP-k-Lip} hold. Let $(t_0, x_0) \in \mathbbm R_+ \times \bar{\Omega}$. We define the set $\mathcal U(t_0, x_0)$ of \emph{optimal directions} at $(t_0, x_0)$ as the set of $u_0 \in \mathbbm S^{d-1}$ for which there exists $\gamma \in \Opt(k, t_0, x_0)$ such that the corresponding optimal control $u \in \Lip([t_0, t_0 + \varphi(t_0, x_0)], \mathbbm S^{d-1})$ associated with $\gamma$ satisfies $u(t_0) = u_0$.
\end{definition}

Notice that, by Proposition~\ref{PropOCP-1}\ref{PropOCP-ExistsOptimal} and Corollary~\ref{coro:smooth}, the set $\mathcal{U}(t_0,x_0)$ is non-empty whenever $x_0 \in \bar\Omega \setminus \Gamma$. We now show that
$\mathcal U(t_0, x_0)$ is singleton along optimal trajectories, except possibly at their initial and final points. This result was already presented in \cite[Proposition~4.11]{Mazanti2019Minimal} for optimal control problems without state constraints and with the stronger assumption that $k \in \Lip(\mathbbm R_+ \times \bar\Omega; \mathbbm R_+)$, but the proof presented in that reference also applies to our setting. We provide the proof here for completeness and also since it is an easy and interesting consequence of Corollary~\ref{coro:smooth}.

\begin{proposition}
\label{PropUSingleElement}
Consider the optimal control problem $\OCP(k)$ and its value function $\varphi$ and assume that \HypoOmega{}, \ref{HypoOCP-k-Bound}, and \ref{HypoOCP-k-Lip} hold. Let $(t_0, x_0) \in \mathbbm R_+ \times \bar{\Omega}$ and $\gamma \in \Opt(k, t_0, x_0)$. Then, for every $t \in (t_0, t_0 + \varphi(t_0, x_0))$, the set $\mathcal{U}(t, \gamma(t))$ contains exactly one element.
\end{proposition}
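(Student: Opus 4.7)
The plan is to use the Lipschitz regularity of the optimal control provided by Corollary~\ref{coro:smooth} as a rigidity statement: concatenating two optimal trajectories through a common point with different initial directions would produce a new optimal trajectory whose optimal control is forced to be discontinuous, contradicting the corollary. Fix $t \in (t_0, t_0 + \varphi(t_0,x_0))$; set $x = \gamma(t)$ and $T = \varphi(t_0, x_0)$. By Corollary~\ref{coro:smooth} applied to $\gamma$, the associated optimal control $u$ admits a Lipschitz representative on $[t_0, t_0+T]$, so $u(t) \in \mathbbm S^{d-1}$ is well-defined, and the restriction of $\gamma$ to $[t, +\infty)$ (suitably made constant on $[0,t]$) is an optimal trajectory from $(t,x)$ with initial control direction $u(t)$. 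This already shows $u(t)\in \mathcal U(t,x)$; the job is to prove it is the only element.

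Suppose, for contradiction, that there is another $\tilde u_0 \in \mathcal U(t,x)$ with $\tilde u_0 \neq u(t)$. By definition of $\mathcal U$, there exists $\tilde\gamma \in \Opt(k, t, x)$ whose Lipschitz optimal control $\tilde u$ (again from Corollary~\ref{coro:smooth}) satisfies $\tilde u(t) = \tilde u_0$. I would then define the spliced trajectory
\[
\hat\gamma(s) = \begin{dcases*} x_0 & for $s \in [0, t_0]$, \\ \gamma(s) & for $s \in [t_0, t]$, \\ \tilde\gamma(s) & for $s \in [t, t_0 + T]$, \\ \tilde\gamma(t_0+T) & for $s \geq t_0 + T$, \end{dcases*}
\]
and verify that $\hat\gamma \in \Opt(k, t_0, x_0)$. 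This is straightforward: $\hat\gamma \in \Adm(k)$ by the matching condition $\tilde\gamma(t) = x = \gamma(t)$, and the dynamic programming equality in Proposition~\ref{PropOCP-1}\ref{PropOCP-DPP} applied first to $\gamma$ on $[t_0, t]$ and then to $\tilde\gamma$ on $[t, t_0+T]$ gives $\tau(t_0, \hat\gamma) = (t-t_0) + \varphi(t,x) = \varphi(t_0, x_0)$.

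Now apply Corollary~\ref{coro:smooth} to $\hat\gamma$: any associated optimal control admits a Lipschitz representative on $[t_0, t_0+T]$. However, the control is uniquely determined almost everywhere from the trajectory via $\hat u(s) = \dot{\hat\gamma}(s)/k(s,\hat\gamma(s))$, since $k \geq K_{\min} > 0$. Hence $\hat u = u$ almost everywhere on $[t_0, t)$ and $\hat u = \tilde u$ almost everywhere on $(t, t_0+T]$. Because $u$ and $\tilde u$ are themselves continuous, the essential one-sided limits of any representative of $\hat u$ at $s = t$ are $u(t)$ and $\tilde u(t) = \tilde u_0$ respectively; redefining $\hat u$ on a set of Lebesgue measure zero cannot reconcile these two distinct values into a continuous function. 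This contradicts the Lipschitz continuity asserted by Corollary~\ref{coro:smooth}, so $\mathcal U(t, x) = \{u(t)\}$.

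The only mildly delicate point is the uniqueness of the control from the trajectory (which is why the bound $k \geq K_{\min} > 0$ in \ref{HypoOCP-k-Bound} matters); once that is in hand, the argument reduces to observing that a Lipschitz function cannot have two different one-sided limits at an interior point. I do not expect any genuine obstacle beyond setting up this bookkeeping carefully.
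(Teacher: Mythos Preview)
Your argument is correct and is essentially the same as the paper's: both concatenate $\gamma$ before $t$ with a second optimal trajectory $\tilde\gamma$ after $t$, verify via the dynamic programming principle that the spliced curve is optimal from $(t_0,x_0)$, and then invoke Corollary~\ref{coro:smooth} to force the control to be continuous at the junction, so that the left and right limits $u(t)$ and $\tilde u_0$ must coincide. Your extra remark that $k\ge K_{\min}>0$ ensures the control is determined a.e.\ by the trajectory is a welcome clarification the paper leaves implicit.
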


\begin{proof}
Let $u: \mathbbm R_+ \to \bar B$ be the optimal control associated with $\gamma$, $t_1 \in (t_0, t_0 + \varphi(t_0, x_0))$, and $x_1 = \gamma(t_1)$. Since $x_1 \in \bar\Omega \setminus \Gamma$, we have $\mathcal{U}(t_1, x_1) \neq \varnothing$. Moreover, by Proposition~\ref{PropOCP-1}\ref{PropOCP-DPP}, we have $\varphi(t_1, x_1) + t_1 - t_0 = \varphi(t_0, x_0)$ and, noticing that $\tau(t_1, \gamma) = t_0 + \varphi(t_0, x_0) - t_1 = \varphi(t_1, x_1)$, we deduce that the restriction of $\gamma$ to $[t_1, t_0 + \varphi(t_0, x_0)]$ (extended by constant values to $\mathbbm R_+$) is an optimal trajectory for $\OCP(k)$ from $(t_1, x_1)$, and thus $u(t_1) \in \mathcal U(t_1, x_1)$.

We conclude the proof by showing that, for every $u_1 \in \mathcal U(t_1, x_1)$, we have $u_1 = u(t_1)$. For that purpose, take $u_1 \in \mathcal U(t_1, x_1)$. By definition of $\mathcal U(t_1, x_1)$, there exists $\tilde\gamma \in \Opt(k, t_1, x_1)$ such that its associated control $\tilde u$ satisfies $\tilde u(t_1) = u_1$. Consider the trajectory $\hat\gamma: \mathbbm R_+ \to \bar\Omega$ defined by $\hat\gamma(t) = \gamma(t)$ for $t \leq t_1$ and $\hat\gamma(t) = \tilde\gamma(t)$ for $t \geq t_1$, and notice that its associated control $\hat u$ satisfies $\hat u(t) = u(t)$ for $t < t_1$ and $\hat u(t) = \tilde u(t)$ for $t > t_1$. Then $\hat\gamma \in \Adm(k)$ and, by construction, $\tau(t_0, \hat\gamma) = t_1 - t_0 + \tau(t_1, \tilde\gamma) = t_1 - t_0 + \varphi(t_1, x_1) = \varphi(t_0, x_0)$, which yields that $\hat\gamma \in \Opt(k, t_0, x_0)$. In particular, by Corollary~\ref{coro:smooth}, we have $\hat u \in \Lip([t_0, t_0 + \varphi(t_0, x_0)]; \mathbbm S^{d-1})$, and thus $\hat u(t_1) = \lim_{t \to t_1^{-}} \hat u(t) = u(t_1)$ and $\hat u(t_1) = \lim_{t \to t_1^{+}} \hat u(t) = \tilde u(t_1) = u_1$, yielding that $u_1 = u(t_1)$, as required.
\end{proof}

Similarly to \cite{SadeghiMulti}, the goal of this section is to characterize $\mathcal U(t_0, x_0)$ as the set of directions of maximal descent of the value function $\varphi$ of $\OCP(k)$. For that purpose, we first introduce the notion of descent rate of $\varphi$ along a given direction.

\begin{definition}
Consider the optimal control problem $\OCP(k)$ and its value function $\varphi$ and assume that \HypoOmega{}, \ref{HypoOCP-k-Bound}, and \ref{HypoOCP-k-Lip} hold. Let $(t_0, x_0) \in \mathbbm R_+ \times \bar\Omega$.
\begin{enumerate}
\item The set of \emph{inward pointing directions} at $x_0$ is the set $\In(x_0)$ of vectors $u_0 \in \mathbbm S^{d-1}$ for which there exists $h_0 > 0$ such that $x_0 + h u_0 \in \bar\Omega$ for every $h \in [0, h_0]$.
\item The \emph{descent rate} of $\varphi$ at $(t_0, x_0)$ is the function $\Delta_{(t_0, x_0)}: \In(x_0) \to \mathbbm R$ defined for $u_0 \in \In(x_0)$ by
\[
\Delta_{(t_0, x_0)}(u_0) = \limsup_{h \to 0^+} \frac{\varphi(t_0 + h, x_{0} + h k(t_0, x_{0}) u_0) - \varphi(t_0, x_{0})}{h}.
\]
\end{enumerate}
\end{definition}

Note that $\In(x_0) = \mathbbm S^{d-1}$ if $x_0 \in \Omega$, while, thanks to \ref{HypoOmegaC11}, for $x_0 \in \partial\Omega$, we have
\[
\{u_0 \in \mathbbm S^{d-1} \suchthat u_0 \cdot \mathbf n(x_0) < 0\} \subset \In(x_0) \subset \{u_0 \in \mathbbm S^{d-1} \suchthat u_0 \cdot \mathbf n(x_0) \leq 0\}.
\]
In general, both inclusions may be strict. As a consequence of those inclusions, we have
\[
\overline{\In}(x_0) = \{u_0 \in \mathbbm S^{d-1} \suchthat u_0 \cdot \mathbf n(x_0) \leq 0\}.
\]

The next proposition provides important properties of the descent rate function.

\begin{proposition}
\label{prop:Delta}
Consider the optimal control problem $\OCP(k)$ and assume that \HypoOmega{}, \ref{HypoOCP-k-Bound}, and \ref{HypoOCP-k-Lip} hold. Let $(t_0, x_0) \in \mathbbm R_+ \times \bar\Omega$.
\begin{enumerate}
\item\label{item:Delta-geq--1} For every $u_0 \in \In(x_0)$, we have $\Delta_{(t_0, x_0)}(u_0) \geq -1$.
\item\label{item:Delta-Lipschitz} The function $\Delta_{(t_0, x_0)}$ is Lipschitz continuous on $\In(x_0)$, with a Lipschitz constant depending only on $K_{\max}$ and the Lipschitz constant of $\varphi$, and thus independent of $(t_0, x_0)$.
\end{enumerate}
\end{proposition}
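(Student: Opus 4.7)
For part \ref{item:Delta-geq--1}, the plan is to invoke the dynamic programming principle (Proposition~\ref{PropOCP-1}\ref{PropOCP-DPP}) along a carefully chosen admissible trajectory starting at $x_0$ and moving in direction $u_0$, and then transfer the resulting inequality to the point $x_0 + h k(t_0, x_0) u_0$ via the Lipschitz continuity of $\varphi$ from Proposition~\ref{varphi is Lipschitz}. The key step is the construction of such a trajectory. Since $u_0 \in \In(x_0)$, there exists $h_0 > 0$ such that the line segment $\beta(s) = x_0 + s u_0$ lies entirely in $\bar\Omega$ for $s \in [0, h_0]$. The idea is then to traverse this segment at variable speed by solving the scalar ODE $\dot s(t) = k(t, \beta(s(t)))$ with $s(t_0) = 0$. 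Thanks to \ref{HypoOCP-k-Bound}, $s$ is of class $\mathbf C^1$, strictly increasing, and reaches $h_0$ in finite time $t_0^\ast > t_0$. Define $\gamma(t) = \beta(s(t))$ on $[t_0, t_0^\ast]$ and extend $\gamma$ constantly on $[0, t_0]$ and on $[t_0^\ast, +\infty)$; then $\dot\gamma(t) = \dot s(t) u_0 = k(t, \gamma(t)) u_0$ with control $u \equiv u_0 \in \mathbbm S^{d-1} \subset \bar B$, so $\gamma \in \Adm(k)$. Since $\dot s(t_0) = k(t_0, x_0)$, a first-order expansion of $s$ at $t_0$ gives
\[
\gamma(t_0 + h) = x_0 + h k(t_0, x_0) u_0 + o(h) \quad \text{as } h \to 0^+.
\]

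With this trajectory in hand, Proposition~\ref{PropOCP-1}\ref{PropOCP-DPP} yields $\varphi(t_0 + h, \gamma(t_0 + h)) \geq \varphi(t_0, x_0) - h$ for small $h \geq 0$. Let $M$ be the Lipschitz constant of $\varphi$ from Proposition~\ref{varphi is Lipschitz}. Since, for $h$ small enough, $x_0 + h k(t_0, x_0) u_0 \in \bar\Omega$ (again by $u_0 \in \In(x_0)$), we can apply Lipschitz continuity to obtain
\[
\varphi(t_0 + h, x_0 + h k(t_0, x_0) u_0) \geq \varphi(t_0 + h, \gamma(t_0 + h)) - M \, o(h) \geq \varphi(t_0, x_0) - h - M \, o(h).
\]
Dividing by $h$ and taking the $\limsup$ as $h \to 0^+$ gives $\Delta_{(t_0, x_0)}(u_0) \geq -1$, which proves \ref{item:Delta-geq--1}.

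For part \ref{item:Delta-Lipschitz}, first observe that $\Delta_{(t_0, x_0)}$ takes values in a bounded interval: part \ref{item:Delta-geq--1} gives the lower bound $-1$, while Lipschitz continuity of $\varphi$ yields an upper bound of the form $M(1 + K_{\max})$ independently of $(t_0, x_0)$. Using the elementary inequality $\abs{\limsup a_n - \limsup b_n} \leq \limsup \abs{a_n - b_n}$ (valid for bounded sequences) together with Lipschitz continuity of $\varphi$ applied to the two perturbed endpoints gives, for all $u_0, u_1 \in \In(x_0)$,
\[
\abs{\Delta_{(t_0, x_0)}(u_0) - \Delta_{(t_0, x_0)}(u_1)} \leq \limsup_{h \to 0^+} \frac{M h k(t_0, x_0) \abs{u_0 - u_1}}{h} \leq M K_{\max} \abs{u_0 - u_1},
\]
which is the desired Lipschitz bound with constant $M K_{\max}$, depending only on $K_{\max}$ and on the Lipschitz constant $M$ of $\varphi$.

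The main obstacle is the construction in part \ref{item:Delta-geq--1} when $x_0 \in \partial\Omega$ and $u_0$ is tangent to $\partial\Omega$, i.e., $u_0 \cdot \mathbf n(x_0) = 0$ with $u_0 \in \In(x_0)$. In that situation, the naive ODE $\dot\gamma = k(t, \gamma) u_0$ with $\gamma(t_0) = x_0$ may immediately leave $\bar\Omega$ due to curvature effects, breaking admissibility. The reparametrization trick above circumvents this difficulty entirely: the trajectory is pinned to the line segment $\{x_0 + s u_0\}_{s \in [0, h_0]}$, which lies in $\bar\Omega$ by assumption, and the only freedom used is the time-rescaling $s(t)$, which automatically produces a unit-norm admissible control.
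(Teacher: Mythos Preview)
Your proof is correct and follows essentially the same approach as the paper: construct an admissible trajectory moving in direction $u_0$, apply the dynamic programming principle, and pass from $\gamma(t_0+h)$ to $x_0 + h k(t_0,x_0) u_0$ via Lipschitz continuity of $\varphi$; for part~\ref{item:Delta-Lipschitz} your subadditivity inequality for $\limsup$ is just a repackaging of the paper's one-sided estimate followed by symmetrization, yielding the same constant $M K_{\max}$.

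One remark on your final paragraph: the ``obstacle'' you describe does not exist, because the ``naive'' ODE $\dot\gamma = k(t,\gamma)\,u_0$ and your reparametrized curve $\gamma(t)=x_0+s(t)u_0$ are \emph{the same trajectory}. Indeed, since the direction $u_0$ is constant, any solution of $\dot\gamma = k(t,\gamma)\,u_0$ moves along the ray $\{x_0+s u_0:s\ge 0\}$ and can be written as $x_0+s(t)u_0$ with $\dot s(t)=k(t,\gamma(t))$; there is no curvature effect that could push it off the ray. The paper simply writes the ODE directly and observes (implicitly using this fact) that the solution stays in $\bar\Omega$ for small times because the segment $x_0+[0,h_0]u_0$ does, by definition of $\In(x_0)$.
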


\begin{proof}
To show \ref{item:Delta-geq--1}, take $u_0 \in \In(x_0)$ and let $\sigma \in \mathbf C^1(\mathbbm R_+; \mathbbm R^d)$ be the unique solution of
\[
\left\{
\begin{aligned}
\dot\sigma(t) & = k(t, \sigma(t)) u_0, \\
\sigma(t_0) & = x_0.
\end{aligned}
\right.
\]
Since $u_0 \in \In(x_0)$, there exists $t_\ast > t_0$ such that $\sigma(t) \in \bar\Omega$ for every $t \in [t_0, t_\ast]$. Define $\gamma \in \Adm(k)$ by
\[
\gamma(t) = \begin{dcases*}
x_0, & if $t \in [0, t_0]$, \\
\sigma(t), & if $t \in [t_0, t_\ast]$, \\
\sigma(t_\ast), & if $t \geq t_\ast$.
\end{dcases*}
\]
Applying Proposition~\ref{PropOCP-1}\ref{PropOCP-DPP} to $\gamma$, we deduce that $\varphi(t_0 + h, \gamma(t_0 + h)) + h \geq \varphi(t_0, x_0)$ for every $h \geq 0$, and thus, for $h \in (0, t_\ast - t_0]$, we have
\[
\frac{\varphi(t_0 + h, \sigma(t_0 + h)) - \varphi(t_0, x_0)}{h} \geq -1.
\]
Since $\sigma(t_0 + h) = x_0 + h k(t_0, x_0) u_0 + o(h)$ as $h \to 0^+$ and $\varphi$ is Lipschitz continuous, we deduce that $\varphi(t_0 + h, \sigma(t_0 + h)) = \varphi(t_0 + h, x_0 + h k(t_0, x_0) u_0) + o(h)$, yielding that, as $h \to 0^+$,
\begin{equation}
\label{eq:decay-varphi-geq--1}
\frac{\varphi(t_0 + h, x_0 + h k(t_0, x_0) u_0) - \varphi(t_0, x_0)}{h} \geq -1 + o(1),
\end{equation}
and the conclusion of \ref{item:Delta-geq--1} follows from the definition of $\Delta_{(t_0, x_0)}$.

In order to prove \ref{item:Delta-Lipschitz}, take $u_1, u_2 \in \In(x_0)$ and denote by $C > 0$ the Lipschitz constant of $\varphi$. For $h > 0$ small enough, we have
\begin{multline*}
\frac{\varphi(t_0 + h, x_{0} + h k(t_0, x_{0}) u_1) - \varphi(t_0, x_{0})}{h} = \frac{\varphi(t_0 + h, x_{0} + h k(t_0, x_{0}) u_2) - \varphi(t_0, x_{0})}{h} \\
 + \frac{\varphi(t_0 + h, x_{0} + h k(t_0, x_{0}) u_1) - \varphi(t_0 + h, x_{0} + h k(t_0, x_{0}) u_2)}{h} \\
\leq \frac{\varphi(t_0 + h, x_{0} + h k(t_0, x_{0}) u_2) - \varphi(t_0, x_{0})}{h} + C K_{\max} \abs{u_1 - u_2}.
\end{multline*}
Taking the $\limsup$ as $h \to 0^+$, we deduce that $\Delta_{(t_0, x_0)}(u_1) \leq \Delta_{(t_0, x_0)}(u_2) + C K_{\max} \abs{u_1 - u_2}$. Since this holds for every $u_1, u_2 \in \In(x_0)$, we obtain that
\[
\abs{\Delta_{(t_0, x_0)}(u_1) - \Delta_{(t_0, x_0)}(u_2)} \leq C K_{\max} \abs{u_1 - u_2} \qquad \text{ for every } u_1, u_2 \in \In(x_0),
\]
as required.
\end{proof}

As a consequence of Proposition~\ref{prop:Delta}\ref{item:Delta-Lipschitz}, $\Delta_{(t_0, x_0)}: \In(x_0) \to \mathbbm R$ can be extended in a unique way to a Lipschitz continuous function defined on $\overline{\In}(x_0)$. In the sequel, by a slight abuse of notation, we use $\Delta_{(t_0, x_0)}$ to denote this Lipschitz continuous extension. Note that this extension still satisfies the assertions of Proposition~\ref{prop:Delta}.

We are now in position to provide the definition of the set of directions of maximal descent of the value function.

\begin{definition}
\label{DefW}
Consider the optimal control problem $\OCP(k)$ and its value function $\varphi$ and assume that \HypoOmega, \ref{HypoOCP-k-Bound}, and \ref{HypoOCP-k-Lip} hold. Let $(t_0, x_0) \in \mathbbm R_+ \times \bar{\Omega}$. We define the set $\mathcal W(t_0, x_0)$ of \emph{directions of maximal descent of $\varphi$} at $(t_0, x_0)$ by
\[
\mathcal W(t_0, x_0) = \Delta^{-1}_{(t_0, x_0)}(\{-1\}) = \{u_0 \in \overline\In(x_0) \suchthat \Delta_{(t_0, x_0)}(u_0) = -1\}.
\]
\end{definition}

Notice that the term \emph{maximal descent} is motivated by Proposition~\ref{prop:Delta}\ref{item:Delta-geq--1}, since we are considering the elements $u_0 \in \overline\In(x_0)$ reaching the lower bound $-1$ on $\Delta_{(t_0, x_0)}$.

\begin{remark}
\label{remk:W-interior}
If $x_0 \in \Omega$ (or, more generally, if the set $\In(x_0)$ is closed), then $\mathcal W(t_0, x_0)$ is simply the set of $u_0 \in \mathbbm S^{d-1}$ such that
\begin{equation}
\label{eq:decay-lim}
\lim_{h \to 0^+} \frac{\varphi(t_0 + h, x_{0} + h k(t_0, x_{0}) u_0) - \varphi(t_0, x_{0})}{h} = -1,
\end{equation}
which is the definition of $\mathcal W(t_0, x_0)$ provided previously in \cite[Definition~4.11]{SadeghiMulti} for optimal control problems without state constraints. Indeed, notice that, if $\In(x_0)$ is closed, then $u_0 \in \mathcal W(t_0, x_0)$ if and only if 
\begin{equation}
\label{eq:decay-limsup}
\limsup_{h \to 0^+} \frac{\varphi(t_0 + h, x_{0} + h k(t_0, x_{0}) u_0) - \varphi(t_0, x_{0})}{h} = -1.
\end{equation}
On the other hand, \eqref{eq:decay-varphi-geq--1} also yields that
\[
\liminf_{h \to 0^+} \frac{\varphi(t_0 + h, x_{0} + h k(t_0, x_{0}) u) - \varphi(t_0, x_{0})}{h} \geq -1
\]
for every $u \in \In(x_0)$, and hence \eqref{eq:decay-lim} is equivalent to \eqref{eq:decay-limsup}.
\end{remark}

Our next result shows that, at the points $(t_0, x_0)$ where $\varphi$ is differentiable, $\mathcal W(t_0, x_0)$ contains a unique direction of maximal descent which, as one might expect, is equal to $-\frac{\nabla \varphi(t_0, x_0)}{\abs*{\nabla \varphi(t_0, x_0)}}$. This was already shown in \cite[Proposition~4.13]{SadeghiMulti} for optimal control problems without state constraints, and that proof also carries over to the present case thanks to Proposition~\ref{prop:HJ-satisfied-classical-sense}. For sake of completeness, and since this proof is quite elementary, we provide it here.

\begin{proposition}
\label{PropWNormalizedGrad}
Consider the optimal control problem $\OCP(k)$ and its value function $\varphi$ and assume that \HypoOmega, \ref{HypoOCP-k-Bound}, and \ref{HypoOCP-k-Lip} hold. Let $(t_0, x_0) \in \mathbbm R_+ \times (\bar{\Omega} \setminus \Gamma)$ be such that $\varphi$ is differentiable at $(t_0, x_0)$. Then
\begin{equation}
\label{eq:W-case-differentiable}
\mathcal{W}(t_0, x_0) = \left\{-\frac{\nabla \varphi(t_0, x_0)}{\abs*{\nabla \varphi(t_0, x_0)}} \right\}.
\end{equation}
\end{proposition}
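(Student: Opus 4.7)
The plan is to reduce the problem to an elementary Cauchy--Schwarz saturation argument by computing the descent rate $\Delta_{(t_0,x_0)}$ explicitly from the differential of $\varphi$ and then applying the Hamilton--Jacobi equation in the classical sense.

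\textbf{Step 1: Explicit formula for $\Delta_{(t_0,x_0)}$.} Since $\varphi$ is differentiable at $(t_0,x_0)$, a first-order Taylor expansion gives, for any $u_0 \in \In(x_0)$,
\[
\Delta_{(t_0,x_0)}(u_0) = \partial_t\varphi(t_0,x_0) + k(t_0,x_0)\nabla\varphi(t_0,x_0)\cdot u_0,
\]
and the $\limsup$ in the definition of $\Delta_{(t_0,x_0)}$ is actually a limit. The right-hand side is affine (hence Lipschitz) in $u_0$ on all of $\mathbbm S^{d-1}$, so by uniqueness of the Lipschitz extension from $\In(x_0)$ to $\overline\In(x_0)$ established after Proposition~\ref{prop:Delta}, the same formula holds on $\overline\In(x_0)$.

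\textbf{Step 2: Using the Hamilton--Jacobi equation and nonvanishing gradient.} Since $x_0 \in \bar\Omega\setminus\Gamma$ and $\varphi$ is differentiable at $(t_0,x_0)$, Proposition~\ref{prop:HJ-satisfied-classical-sense} yields $\partial_t\varphi(t_0,x_0) = k(t_0,x_0)\abs{\nabla\varphi(t_0,x_0)} - 1$, while Proposition~\ref{PropOCP-2}\ref{PropOCP-LowerBound} gives $\nabla\varphi(t_0,x_0)\neq 0$ (and $k(t_0,x_0)\geq K_{\min}>0$). Substituting into the formula of Step 1, the equation $\Delta_{(t_0,x_0)}(u_0)=-1$ becomes
\[
\nabla\varphi(t_0,x_0)\cdot u_0 = -\abs{\nabla\varphi(t_0,x_0)}.
\]
Since $u_0\in\mathbbm S^{d-1}$, the Cauchy--Schwarz inequality saturates if and only if $u_0 = -\nabla\varphi(t_0,x_0)/\abs{\nabla\varphi(t_0,x_0)}$. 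This proves that $\mathcal W(t_0,x_0)$ is contained in the right-hand side of \eqref{eq:W-case-differentiable}.

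\textbf{Step 3: Admissibility of the candidate direction.} It remains to verify that the candidate $u_0^\ast := -\nabla\varphi(t_0,x_0)/\abs{\nabla\varphi(t_0,x_0)}$ actually belongs to $\overline\In(x_0)$, so that it lies in the domain of $\Delta_{(t_0,x_0)}$. If $x_0\in\Omega$, this is automatic since $\overline\In(x_0) = \mathbbm S^{d-1}$. If $x_0\in\partial\Omega$, then $\overline\In(x_0) = \{u\in\mathbbm S^{d-1} \suchthat u\cdot\mathbf n(x_0)\leq 0\}$, and Remark~\ref{remk:boundary-differentiable} (a direct consequence of Theorem~\ref{Thm viscosity boundary cond}) gives $\nabla\varphi(t_0,x_0)\cdot\mathbf n(x_0)\geq 0$, hence $u_0^\ast\cdot\mathbf n(x_0)\leq 0$, so $u_0^\ast\in\overline\In(x_0)$. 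The formula of Step 1 then shows directly that $\Delta_{(t_0,x_0)}(u_0^\ast) = -1$, completing the proof.

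The only genuinely subtle point is the boundary case in Step 3: without Theorem~\ref{Thm viscosity boundary cond} we would not know that $-\nabla\varphi(t_0,x_0)$ points inside $\bar\Omega$, and the candidate maximizer produced by Cauchy--Schwarz could fall outside $\overline\In(x_0)$, so that $\mathcal W(t_0,x_0)$ would a priori be empty. The remaining steps are routine first-order calculus once the Hamilton--Jacobi identity is available.
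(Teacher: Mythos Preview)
Your proof is correct and follows essentially the same approach as the paper's: compute $\Delta_{(t_0,x_0)}$ explicitly via differentiability, invoke Proposition~\ref{prop:HJ-satisfied-classical-sense} for the classical Hamilton--Jacobi identity and Proposition~\ref{PropOCP-2}\ref{PropOCP-LowerBound} for $\nabla\varphi\neq 0$, reduce $\Delta_{(t_0,x_0)}(u_0)=-1$ to the Cauchy--Schwarz equality case, and use Remark~\ref{remk:boundary-differentiable} to check that the candidate direction lies in $\overline{\In}(x_0)$. The structure, ingredients, and order of the argument match the paper's proof.
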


\begin{proof}
Since $\varphi$ is differentiable at $(t_0, x_0)$, we have, for every $u \in \In(x_0)$,
\begin{equation}
\label{eq:Delta-case-differentiable}
\Delta_{(t_0, x_0)}(u) = \partial_t \varphi(t_0, x_0) + k(t_0, x_0) \nabla\varphi(t_0, x_0) \cdot u.
\end{equation}
By continuity, the above equality also holds for every $u \in \overline\In(x_0)$. We also have, by Proposition~\ref{prop:HJ-satisfied-classical-sense}, that
\begin{equation}
\label{eq:HJ-case-differentiable}
-\partial_t \varphi(t_0, x_0) + k(t_0, x_0) \abs{\nabla\varphi(t_0, x_0)} - 1 = 0.
\end{equation}
Moreover, recall that $k(t_0, x_0) > 0$ and,  by Proposition~\ref{PropOCP-2}\ref{PropOCP-LowerBound}, $\nabla\varphi(t_0, x_0) \neq 0$. If $u_0 \in \mathcal W(t_0, x_0)$, then $\Delta_{(t_0, x_0)}(u_0) = -1$ and, combining with \eqref{eq:Delta-case-differentiable} and \eqref{eq:HJ-case-differentiable}, we deduce that
\[
k(t_0, x_0) \left[\abs{\nabla\varphi(t_0, x_0)} + \nabla\varphi(t_0, x_0) \cdot u_0\right] = 0,
\]
which yields $u_0 = -\frac{\nabla\varphi(t_0, x_0)}{\abs{\nabla\varphi(t_0, x_0)}}$ since $u_0 \in \mathbbm S^{d-1}$. Conversely, defining $u_0 = -\frac{\nabla\varphi(t_0, x_0)}{\abs{\nabla\varphi(t_0, x_0)}}$, Remark~\ref{remk:boundary-differentiable} ensures that $u_0 \in \overline{\In}(x_0)$ and it is immediate to compute, using \eqref{eq:Delta-case-differentiable} and \eqref{eq:HJ-case-differentiable}, that $\Delta_{(t_0, x_0)}(u_0) = -1$, showing that $u_0 \in \mathcal W(t_0, x_0)$.
\end{proof}

The reason why we go through the definition of descent rate $\Delta_{(t_0, x_0)}$ in order to define $\mathcal W(t_0, x_0)$ instead of the more direct definition provided in \cite[Definition~4.11]{SadeghiMulti} is that, if $x_0 \in \partial\Omega$ and $u_0 \in \mathbbm S^{d-1}$, one might have $x_0 + h k(t_0, x_0) u_0 \notin \bar\Omega$ for every $h > 0$ small enough. Since $\varphi$ is defined only in the set $\mathbbm R_+ \times \bar\Omega$, this means that the term $\varphi(t_0 + h, x_{0} + h k(t_0, x_{0}) u_0)$ is not well-defined for any $h > 0$ small enough, and thus the limit in the left-hand side of \eqref{eq:decay-lim} does not make sense. An alternative approach, however, is to replace $\varphi$ in \eqref{eq:decay-lim} by the value function $\varphi_\epsilon$ of the penalized optimal control problem $\OCP_\epsilon(k_\epsilon)$ defined in Section~\ref{sec:penalized}. This is the subject of our next definition, which coincides with \cite[Definition~4.11]{SadeghiMulti} but with $\varphi$ is replaced by $\varphi_\epsilon$ whenever its argument might fail to belong to its domain $\mathbbm R_+ \times \bar\Omega$.

\begin{definition}
Consider the optimal control problems $\OCP(k)$ and $\OCP_\epsilon(k_\epsilon)$ and their respective value functions $\varphi$ and $\varphi_\epsilon$ and assume that \HypoOmega, \ref{HypoOCP-k-Bound}, and \ref{HypoOCP-k-Lip} hold. Let $(t_0, x_0) \in \mathbbm R_+ \times \bar\Omega$. For $\epsilon > 0$, we define the set $\mathcal W_\epsilon(t_0, x_0)$ of \emph{directions of maximal descent of $\varphi_\epsilon$} at $(t_0, x_0)$ by
\[
\mathcal W_\epsilon(t_0, x_0) = \left\{u_0 \in \mathbbm S^{d-1} \suchthat \lim_{h \to 0^+} \frac{\varphi_\epsilon(t_0 + h, x_{0} + h k(t_0, x_{0}) u_0) - \varphi(t_0, x_{0})}{h} = -1\right\}.
\]
\end{definition}

Our next result concerns the relation between optimal directions (i.e., elements of $\mathcal U(t_0, x_0)$) and directions of maximal descent of $\varphi$ (i.e., elements of $\mathcal W(t_0, x_0)$), and asserts that both notions actually coincide, and that they also coincide with $\mathcal W_\epsilon(t_0, x_0)$ for $\epsilon > 0$ small enough. The fact that $\mathcal U(t_0, x_0) = \mathcal W(t_0, x_0)$ was already established in \cite[Theorem~4.14]{SadeghiMulti} for minimal-time optimal control problems without state constraints, and the proof we provide here follows the same lines, but passes through the optimal control problem $\OCP_\epsilon(k_\epsilon)$ in order to handle the state constraints appropriately.

\begin{theorem}\label{thm Ut_0,x_0}
Consider the optimal control problems $\OCP(k)$ and $\OCP_\epsilon(k_\epsilon)$ and assume that \HypoOmega, \ref{HypoOCP-k-Bound}, and \ref{HypoOCP-k-Lip} hold. There exists $\epsilon_0 > 0$ such that, for every $\epsilon \in (0, \epsilon_0)$ and $(t_0, x_0) \in \mathbbm R_+ \times \bar{\Omega}$, we have $\mathcal{U}(t_0, x_0) = \mathcal{W}(t_0, x_{0}) = \mathcal W_\epsilon(t_0, x_0)$.
\end{theorem}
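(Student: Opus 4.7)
The plan is to prove $\mathcal U(t_0, x_0) = \mathcal W_\epsilon(t_0, x_0)$ by adapting the strategy of \cite[Theorem~4.14]{SadeghiMulti} to the penalized problem $\OCP_\epsilon(k_\epsilon)$, using Theorem~\ref{thm:Opt-eps-equals-Opt} to transfer between $\OCP(k)$ and $\OCP_\epsilon(k_\epsilon)$ on $\bar\Omega$, and to then deduce $\mathcal W(t_0, x_0) = \mathcal W_\epsilon(t_0, x_0)$ by a Lipschitz comparison of the two defining limits, after observing that trajectories in $\Opt(k, t_0, x_0)$ take values in $\bar\Omega$ and hence $\mathcal U(t_0, x_0) \subset \overline{\In}(x_0)$. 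Throughout, fix $\epsilon_0$ as in Theorem~\ref{thm:Opt-eps-equals-Opt}, take $\epsilon \in (0, \epsilon_0)$ and $(t_0, x_0) \in \mathbbm R_+ \times \bar\Omega$, write $y_h = x_0 + h k(t_0, x_0) u_0$, and dispatch the degenerate case $x_0 \in \Gamma$ separately (where $\varphi \equiv 0$ and the three sets coincide trivially).

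The inclusion $\mathcal U \subset \mathcal W_\epsilon$ follows from the dynamic programming principle: given $u_0 \in \mathcal U(t_0, x_0)$, pick $\gamma \in \Opt(k, t_0, x_0) = \Opt_\epsilon(k_\epsilon, t_0, x_0)$ whose optimal control $u$ satisfies $u(t_0) = u_0$; equation \eqref{eq:DPP-epsilon} along $\gamma$, combined with the expansion $\gamma(t_0 + h) = x_0 + h k(t_0, x_0) u_0 + o(h)$ from the $\mathbf C^1$ regularity of Corollary~\ref{coro:smooth} and the local Lipschitz continuity of $\varphi_\epsilon$ of Proposition~\ref{prop:varphi-eps-loc-Lip}, yields $\lim_{h \to 0^+} (\varphi_\epsilon(t_0 + h, y_h) - \varphi(t_0, x_0))/h = -1$, that is, $u_0 \in \mathcal W_\epsilon(t_0, x_0)$.

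For the converse $\mathcal W_\epsilon \subset \mathcal U$, let $\tilde\gamma$ solve $\dot{\tilde\gamma} = k_\epsilon(\cdot, \tilde\gamma) u_0$ with $\tilde\gamma(t_0) = x_0$; for each small $h > 0$ choose $\gamma_h^{(2)} \in \Opt_\epsilon(k_\epsilon, t_0 + h, \tilde\gamma(t_0 + h))$ and let $\hat\gamma_h$ be the concatenation of $\tilde\gamma$ on $[t_0, t_0 + h]$ with $\gamma_h^{(2)}$ on $[t_0 + h, +\infty)$. Using $\tilde\gamma(t_0 + h) = y_h + o(h)$, the Lipschitz continuity of $\varphi_\epsilon$, and the hypothesis $u_0 \in \mathcal W_\epsilon$, the arrival time of $\hat\gamma_h$ at $\Gamma$ equals $\varphi(t_0, x_0) + o(h)$, and Arzel\`a--Ascoli extracts a subsequence $h_n \to 0^+$ with $\hat\gamma_{h_n} \to \gamma^*$ uniformly on compacts, where $\gamma^* \in \Opt_\epsilon(k_\epsilon, t_0, x_0) = \Opt(k, t_0, x_0)$. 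The main obstacle is to establish $u^*(t_0) = u_0$ for the optimal control $u^*$ of $\gamma^*$, which is nontrivial because uniform convergence of trajectories does not preserve initial velocities; I plan to exploit the uniform Lipschitz bound on the controls $u_{h_n}^{(2)}$ supplied by Proposition~\ref{prop:consequences-of-Pontryagin-for-optimal-trajectories-of-ocp-epsilon}\ref{item:system-on-gamma-u} to extract, via a second Arzel\`a--Ascoli application, a uniform limit $\bar u$, identify $\bar u$ with $u^*$ on $[t_0, t_0 + \varphi(t_0, x_0)]$ by passing to the limit in the ODE $\dot\gamma_{h_n}^{(2)} = k_\epsilon(\cdot, \gamma_{h_n}^{(2)}) u_{h_n}^{(2)}$ and invoking ODE uniqueness, and then pin down $u^*(t_0) = u_0$ by a first-order consistency argument comparing the cost of the construction with that of any competing initial direction through the Lipschitz continuity of $\varphi_\epsilon$.

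Having established $\mathcal U = \mathcal W_\epsilon$, the fact that every $\gamma \in \Opt(k, t_0, x_0)$ remains in $\bar\Omega$ forces $\dot\gamma(t_0) \cdot \mathbf n(x_0) \leq 0$ when $x_0 \in \partial\Omega$, so $\mathcal U(t_0, x_0) \subset \overline{\In}(x_0)$ and hence $\mathcal W_\epsilon(t_0, x_0) \subset \overline{\In}(x_0)$; since $\mathcal W(t_0, x_0) \subset \overline{\In}(x_0)$ by definition, it suffices to compare the two sets on $\overline{\In}(x_0)$. For $u_0 \in \In(x_0)$, the point $y_h$ lies in $\bar\Omega$ for small $h$ and $\varphi_\epsilon(t_0 + h, y_h) = \varphi(t_0 + h, y_h)$, so the limit defining $\mathcal W_\epsilon$ coincides directly with $\Delta_{(t_0, x_0)}(u_0)$ (recall $\Delta \geq -1$ by Proposition~\ref{prop:Delta}\ref{item:Delta-geq--1}, forcing the two-sided limit when the supremum equals $-1$). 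For $u_0 \in \overline{\In}(x_0) \setminus \In(x_0)$, the Lipschitz continuity of $\varphi_\epsilon$ makes $u_0 \mapsto \limsup_{h \to 0^+}(\varphi_\epsilon(t_0 + h, y_h) - \varphi(t_0, x_0))/h$ a Lipschitz function on $\mathbbm S^{d-1}$ that agrees with $\Delta_{(t_0, x_0)}$ on $\In(x_0)$, hence with its Lipschitz extension on $\overline{\In}(x_0)$ (Proposition~\ref{prop:Delta}\ref{item:Delta-Lipschitz}); the matching lower bound $\liminf_{h \to 0^+} (\varphi_\epsilon(t_0 + h, y_h) - \varphi(t_0, x_0))/h \geq -1$ follows from applying the DPP to the $\OCP_\epsilon$-admissible trajectory $\sigma$ solving $\dot\sigma = k_\epsilon(\cdot, \sigma) u_0$ with $\sigma(t_0) = x_0$ (which satisfies $\sigma(t_0 + h) = y_h + O(h^2)$), so the limit exists and equals $\Delta_{(t_0, x_0)}(u_0)$, yielding $u_0 \in \mathcal W_\epsilon \Leftrightarrow u_0 \in \mathcal W$.
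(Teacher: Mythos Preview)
Your overall plan is sound, and both the inclusion $\mathcal U \subset \mathcal W_\epsilon$ and the identification $\mathcal W = \mathcal W_\epsilon$ on $\overline{\In}(x_0)$ (conditional on $\mathcal U = \mathcal W_\epsilon$) are correctly argued. The gap is in the hardest step, $\mathcal W_\epsilon(t_0, x_0) \subset \mathcal U(t_0, x_0)$, at precisely the point you flag as the ``main obstacle'': showing that the limit control satisfies $u^*(t_0) = u_0$.

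Your proposed route---extract $\bar u = \lim u_{h_n}^{(2)}$ via Arzel\`a--Ascoli, identify $\bar u$ with $u^*$ by passing to the limit in the ODE, and then ``pin down $u^*(t_0) = u_0$ by a first-order consistency argument \ldots{} through the Lipschitz continuity of $\varphi_\epsilon$''---does not close. Writing $\bar u_1^n = u_{h_n}^{(2)}(t_0 + h_n)$, what must be ruled out is $\bar u_1^n \not\to u_0$. Lipschitz continuity of $\varphi_\epsilon$ alone only gives $\abs{\varphi_\epsilon(s_1, z) - \varphi_\epsilon(s_2, z)} \leq C\abs{s_1 - s_2}$, which is too weak: when you compare the cost of the kinked concatenation $\hat\gamma_{h_n}$ with that of a straight competitor, all the inequalities collapse to $0 \leq o(h_n)$ and no contradiction emerges. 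The missing ingredient is the \emph{strict} lower bound $\partial_t \varphi_\epsilon \geq c - 1 > -1$ from Proposition~\ref{PropOCP-2}\ref{PropOCP-LowerBound} (applied to the auxiliary state-constrained problem on $\bar\Omega_{\epsilon/2}$, cf.\ the proof of Proposition~\ref{prop:varphi-eps-loc-Lip}), which upgrades the time-Lipschitz bound to a quantitatively sub-unit one.

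Concretely, the paper argues by contradiction. If $\abs{\bar u_1^n - u_0} \geq \epsilon$ along a subsequence, set $x_1^n = \tilde\gamma(t_0 + h_n)$, $x_2^n = \gamma_{h_n}^{(2)}(t_0 + 2h_n)$, and let $\bar x_2^n$ be the endpoint of the straight admissible trajectory from $x_1^n$ in direction $\bar u_1^n$ for time $h_n$. A law-of-cosines computation gives $\abs{\bar x_2^n - x_0} \leq \rho\bigl(\abs{x_1^n - x_0} + \abs{\bar x_2^n - x_1^n}\bigr)$ for some $\rho < 1$ depending only on $\epsilon$, $K_{\min}$, $K_{\max}$; hence one reaches $\bar x_2^n$ from $x_0$ along a straight admissible trajectory in time $\tau^n \leq 2\rho h_n + o(h_n)$, and since $\abs{x_2^n - \bar x_2^n} = O(h_n^2)$ (from the uniform Lipschitz bound on $u_{h_n}^{(2)}$), one reaches $x_2^n$ from $x_0$ in total time $T_2^n \leq 2\rho h_n + o(h_n) < 2h_n = T_1^n$. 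The dynamic programming inequality along this shortcut, the dynamic programming equality along $\hat\gamma_{h_n}$, and Proposition~\ref{PropOCP-2}\ref{PropOCP-LowerBound} then force $c(T_1^n - T_2^n) \leq o(h_n)$, contradicting $T_1^n - T_2^n \geq 2(1-\rho)h_n + o(h_n)$. Without both the geometric shortcut and the strict constant $c > 0$, there is no mechanism to exclude $\bar u_1^n \not\to u_0$; this is what your sketch is missing.
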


\begin{proof}
We first remark that, if $x_0 \in \Gamma$, then $\mathcal{U}(t_0, x_0) = \mathcal{W}(t_0, x_{0}) = \mathcal W_\epsilon(t_0, x_0) = \varnothing$, and so we are only left to consider the case $x_0 \in \bar{\Omega} \setminus \Gamma$. We let $\epsilon_0 > 0$ be as in the statement of Propositions~\ref{gamma-eps in Omeg} and \ref{prop:varphi-eps-loc-Lip} and Theorem~\ref{thm:Opt-eps-equals-Opt}, and we fix $\epsilon \in (0, \epsilon_0)$. We proceed by proving that $\mathcal{U}(t_0, x_0) \subset \mathcal{W}(t_0, x_{0}) \subset \mathcal W_\epsilon(t_0, x_0) \subset \mathcal{U}(t_0, x_0)$.

\medskip

\noindent\emph{Part~I: Proof of the inclusion $\mathcal{U}(t_0, x_{0}) \subset \mathcal{W}(t_0, x_{0})$.} Let $u_0 \in \mathcal U(t_0, x_0)$ and take $\gamma \in \Opt(k, t_0, x_0)$ and $u: \mathbbm R_+ \to \bar B$ an optimal control associated with $\gamma$ such that $u \in \Lip([t_0, t_0 + \varphi(t_0, x_0)]; \mathbbm S^{d-1})$ and $u(t_0) = u_0$. By Proposition~\ref{PropOCP-1}\ref{PropOCP-DPP}, we have, for every $h \in (0, \varphi(t_0, x_0)]$, that
\begin{equation}
\label{eq:frac-DPP}
\frac{\varphi(t_0 + h, \gamma(t_0 + h)) - \varphi(t_0, x_0)}{h} = -1.
\end{equation}

We claim that $u_0 \in \overline\In(x_0)$. Indeed, this is trivial if $x_0 \in \Omega$, and, if $x_0 \in \partial\Omega$, since $\gamma$ takes values in $\bar\Omega$ and $\dot\gamma(t_0) = k(t_0, x_0) u_0$, one can easily check that $u_0 \cdot \mathbf n(x_0) \leq 0$, implying that $u_0 \in \overline\In(x_0)$.

Let $(u_n)_{n \in \mathbbm N}$ be a sequence in $\In(x_0)$ such that $u_n \to u_0$ as $n \to +\infty$. Fix $n \in \mathbbm N$ and notice that, as $h \to 0^+$, we have
\begin{align*}
\gamma(t_0 + h) & = x_0 + h k(t_0, x_0) u_0 + o(h) \\
& = x_0 + h k(t_0, x_0) u_n + h k(t_0, x_0) (u_0 - u_n) + o(h).
\end{align*}
Recall that $\varphi$ is Lipschitz continuous (Proposition~\ref{varphi is Lipschitz}) and denote by $M > 0$ its Lipschitz constant. Using the fact that $x_0 + h k(t_0, x_0) u_n \in \bar\Omega$ for $h > 0$ small enough, we deduce that, as $h \to 0^+$,
\[
\abs{\varphi(t_0 + h, \gamma(t_0 + h)) - \varphi(t_0 + h, x_0 + h k(t_0, x_0) u_n)} \leq h M K_{\max} \abs{u_n - u_0} + o(h)
\]
Combining with \eqref{eq:frac-DPP} and letting $h \to 0$, we deduce that, for every $n \in \mathbbm N$,
\[
\Delta_{(t_0, x_0)}(u_n) = \limsup_{h \to 0^+} \frac{\varphi(t_0 + h, x_0 + h k(t_0, x_0) u_n) - \varphi(t_0, x_0)}{h} \leq -1 + M K_{\max} \abs{u_n - u_0}.
\]
Hence, letting $n \to +\infty$ and using Proposition~\ref{prop:Delta}, we deduce that $\Delta_{(t_0, x_0)}(u_0) = -1$, showing that $u_0 \in \mathcal W(t_0, x_0)$, as required.

\medskip

\noindent\emph{Part~II: Proof of the inclusion $\mathcal W(t_0, x_0) \subset \mathcal W_\epsilon(t_0, x_0)$.} Let $u_0 \in \mathcal W(t_0, x_0)$ and consider a sequence $(u_n)_{n \in \mathbbm N}$ in $\In(x_0)$ such that $u_n \to u_0$ as $n \to +\infty$. By Proposition~\ref{prop:varphi-eps-loc-Lip}, $\varphi_\epsilon$ is Lipschitz continuous on $\mathbbm R_+ \times \bar\Omega_{\epsilon/2}$, and we denote by $C_\epsilon > 0$ a Lipschitz constant of this map. Notice that, for $h \in \left[0, \frac{\epsilon}{2 K_{\max}}\right]$, we have $d_\Omega(x_0 + h k(t_0, x_0) u_n) \leq \frac{\epsilon}{2}$ for every $n \in \mathbbm N \cup \{0\}$. Hence, for $h \in \left(0, \frac{\epsilon}{2 K_{\max}}\right]$ and $n \in \mathbbm N$, we have
\begin{multline*}
\frac{\varphi_\epsilon(t_0 + h, x_0 + h k(t_0, x_0) u_0) - \varphi(t_0, x_0)}{h} \\ \leq \frac{\varphi_\epsilon(t_0 + h, x_0 + h k(t_0, x_0) u_n) - \varphi(t_0, x_0)}{h} + C_\epsilon K_{\max} \abs{u_n - u_0}.
\end{multline*}
For $n \in \mathbbm N$, since $u_n \in \In(x_0)$, we have $x_0 + h k(t_0, x_0) u_n \in \bar\Omega$ for $h > 0$ small enough, and thus, by Theorem~\ref{thm:Opt-eps-equals-Opt}, we have $\varphi_\epsilon(t_0 + h, x_0 + h k(t_0, x_0) u_n) = \varphi(t_0 + h, x_0 + h k(t_0, x_0) u_n)$ for $h$ small enough. Letting $h \to 0^+$, we thus deduce that
\[
\limsup_{h \to 0^+} \frac{\varphi_\epsilon(t_0 + h, x_0 + h k(t_0, x_0) u_0) - \varphi(t_0, x_0)}{h} \leq \Delta_{(t_0, x_0)}(u_n) + C_\epsilon K_{\max} \abs{u_n - u_0}.
\]
Taking now the limit as $n \to +\infty$, we obtain that
\begin{equation}
\label{eq:W-subset-W-eps-limsup}
\limsup_{h \to 0^+} \frac{\varphi_\epsilon(t_0 + h, x_0 + h k(t_0, x_0) u_0) - \varphi(t_0, x_0)}{h} \leq -1.
\end{equation}
Let $\gamma: \mathbbm R_+ \to \mathbbm R^d$ be defined by $\gamma(t) = x_0$ for $t \in [0, t_0]$ and as the solution of the differential equation $\dot\gamma(t) = k_\epsilon(t, \gamma(t)) u_0$ with initial condition $\gamma(t_0) = x_0$ for $t \geq t_0$. Then $\gamma \in \Adm_\epsilon(k_\epsilon)$ and, by the dynamic programming principle \eqref{eq:DPP-epsilon} for $\OCP_\epsilon(k_\epsilon)$, and using also Theorem~\ref{thm:Opt-eps-equals-Opt}, we have, for $h \geq 0$,
\[
\frac{\varphi_\epsilon(t_0 + h, \gamma(t_0 + h)) - \varphi(t_0, x_0)}{h} \geq -1.
\]
Using that $\gamma(t_0 + h) = x_0 + h k(t_0, x_0) u_0 + o(h)$ as $h \to 0^+$, $\gamma(t_0 + h) \in \bar\Omega_{\epsilon/2}$ for $h \geq 0$ small enough, and that $\varphi_\epsilon$ is Lipschitz continuous in $\mathbbm R_+ \times \bar\Omega_{\epsilon/2}$, we deduce that, as $h \to 0^+$,
\[
\frac{\varphi_\epsilon(t_0 + h, x_0 + h k(t_0, x_0) u_0) - \varphi(t_0, x_0)}{h} \geq -1 + o(1).
\]
Together with \eqref{eq:W-subset-W-eps-limsup}, this shows that
\[
\lim_{h \to 0^+} \frac{\varphi_\epsilon(t_0 + h, x_0 + h k(t_0, x_0) u_0) - \varphi(t_0, x_0)}{h} = -1,
\]
and thus $u_0 \in \mathcal W_\epsilon(t_0, x_0)$, as required.

\medskip

\noindent\emph{Part~III: Proof of the inclusion $\mathcal W_\epsilon(t_0, x_0) \subset \mathcal U(t_0, x_0)$.} Let $u_0 \in \mathcal W_\epsilon(t_0, x_0)$ and $h > 0$, which is implicitly always assumed to be small enough. Then, by definition of $\mathcal W_\epsilon(t_0, x_0)$, we have, as $h \to 0^+$,
\begin{equation}
\label{eq:u0-in-W}
\varphi_\epsilon(t_0 + h, x_{0} + h k(t_0, x_{0}) u_{0}) = \varphi(t_0, x_{0}) - h + o(h).
\end{equation}
Define $\gamma_{0}: [t_0, t_0 + h] \to \mathbbm R^d$ by 
\begin{equation}\label{ODE cont1}
    \left \{
    \begin{aligned}
    \Dot{\gamma}_{0}(t) & = k_\epsilon(t,\gamma_{0}(t)) u_{0},
    \\  
    \gamma_{0}(t_0) & = x_{0}.       
    \end{aligned} \right.
\end{equation}
Let $x_{1}^h = \gamma_{0}(t_0 + h)$ and $t_1^h = t_0 + h$. Since $\Gamma$ is closed and $x_0 \notin \Gamma$, we have $x_1^h \notin \Gamma$ for $h > 0$ small enough. Let $\gamma_{1}^h \in \Opt_\epsilon(k_\epsilon, t_1^h, x_{1}^h)$ and $u_{1}^h$ be the optimal control associated with $\gamma_{1}^h$, which satisfies $u_1^h \in \Lip([t_1^h, t_1^h + \varphi_\epsilon(t_1^h, x_1^h)]; \mathbbm S^{d-1})$ by Proposition~\ref{prop:consequences-of-Pontryagin-for-optimal-trajectories-of-ocp-epsilon}. Set $\Bar{u}_{1}^h = u_{1}^h(t_1^h)\in \mathbbm S^{d-1}$ and define $\Bar\gamma_1^h: [t_1^h, t_1^h + h] \to \mathbbm R^d$ by
\begin{equation}\label{ODE cont2}
    \left \{
    \begin{aligned}
    \Dot{\Bar{\gamma}}_{1}^h(t) & = k_\epsilon(t, \Bar{\gamma}_{1}^h(t)) \Bar{u}_{1}^h \\  
    \Bar{\gamma}_{1}^h(t_1^h) & = x_{1}^h.
    \end{aligned} \right.
\end{equation}
Let us also set $t_2^h=t_1^h+h$, $x_{2}^h = \gamma_{1}^h(t_2^h)$ and $\Bar{x}_{2}^h = \Bar{\gamma}_{1}^h(t_2^h)$. We split the sequel of the proof in two cases.

\medskip

\noindent\emph{Case 1.} We assume in this case that $\lim_{h \to 0^+} \Bar{u}_{1}^h = u_{0}$. Let $\hat u_1^h \in \Lip(\mathbbm S^{d-1})$ be defined by $\hat u_1^h(t) = \bar u_1^h$ for $t \in [0, t_1^h]$, $\hat u_1^h(t) = u_1^h(t)$ for $t \in [t_1^h, t_1^h + \varphi_\epsilon(t_1^h, x_1^h)]$, and $\hat u_1^h(t) = u_1^h(t_1^h + \varphi(t_1^h, x_1^h))$ for $t \geq t_1^h + \varphi(t_1^h, x_1^h)$. Since $\gamma_{1}^h$ and $\hat u_{1}^h$ are Lipschitz continuous and their Lipschitz constants do not depend on $h$ (see Proposition~\ref{prop:consequences-of-Pontryagin-for-optimal-trajectories-of-ocp-epsilon}), one deduces from Arzelà--Ascoli Theorem that there exist a positive sequence $(h_n)_{n \in \mathbbm N}$ converging to $0$ as $n \to +\infty$ and elements $\gamma^* \in \Lip_{K_{\max}}(\mathbbm R^d)$ and $u^* \in \Lip(\mathbbm S^{d-1})$ such that $\gamma_{1}^{h_n} \to \gamma^*$ and $\hat u_{1}^{h_n} \to u^*$ as $n \to +\infty$, uniformly on compact time intervals. Since $\gamma_1^h \in \Opt_\epsilon(k_\epsilon, t_1^h, x_1^h)$ for $h > 0$ and $t_1^h \to t_0$ and $x_1^h \to x_0$ as $h \to 0^+$, one can easily show, using the continuity of $\varphi_\epsilon$, that $\gamma^* \in \Opt_\epsilon(k_\epsilon, t_0, x_0)$ and its corresponding optimal control coincides with $u^*$ on $[t_0, t_0 + \varphi(t_0, x_0)]$. Moreover, since $x_0 \in \bar\Omega$, we have from Theorem~\ref{thm:Opt-eps-equals-Opt} that $\Opt_\epsilon(k_\epsilon, t_0, x_0) = \Opt(k, t_0, x_0)$. In addition,
\[
u^*(t_0) = \lim_{n \to +\infty} \hat u_{1}^{h_n} (t_1^{h_n}) = \lim_{n \to +\infty} \Bar{u}_{1}^{h_n} = u_{0},
\]
which implies that $u_0 \in \mathcal U(t_0, x_0)$, as required.

\medskip

\noindent\emph{Case 2.} We now consider the case where $(\bar u_1^h)_{h > 0}$ does not converge to $u_0$ as $h \to 0^+$, and we prove that this case is not possible. Let $\epsilon > 0$ and $(h_n)_{n\in \mathbbm N}$ be a positive sequence such that $h_n \to 0$ as $n\to +\infty$ and $\abs{\Bar{u}_{1}^{h_n}-u_{0}} \ge \epsilon$ for every $n\in \mathbbm N$. For simplicity, we set $t_1^{h_n}=t_1^n$, $x_{1}^{h_n}=x_{1}^n$, and similarly for all other variables whose upper index is $h_n$. In order to clarify the constructions used in this case, we illustrate them in Figure~\ref{FigCase2}.

\tikzset{
 mid arrow/.style={postaction={decorate,decoration={
        markings,
        mark=at position .5 with {\arrow{Stealth}}
      }}}
}

\begin{figure}[ht]
\centering
\begin{tikzpicture}
\node (x0) at (0, 0) {};
\node (x1) at (0, -2) {};
\node (x2bar) at (3, -2) {};
\node (x2) at (3.25, -3.25) {};

\draw[red, mid arrow] (x0.center) -- node[midway, left] {$\gamma_0$} (x1.center);
\draw[blue, mid arrow] (x0.center) -- node[midway, above right] {$\gamma_2^n$} (x2bar.center);
\draw[violet, mid arrow] (x1.center) to[out = 0, in = 180] node[midway, below left] {$\gamma_1^n$} (x2.center);
\draw[violet, mid arrow] (x2.center) -- ++(1, 0);
\draw[green!50!black, mid arrow] (x1.center) -- node[midway, above] {$\bar \gamma_1^n$} (x2bar.center);
\draw[orange!75!black, mid arrow] (x2bar.center) -- node[midway, right] {$\gamma_3^n$} (x2.center);

\fill (x0) circle[radius=0.05] node[left] {$x_0$};
\fill (x1) circle[radius=0.05] node[left] {$x_1^n$};
\fill (x2bar) circle[radius=0.05] node[right] {$\bar x_2^n$};
\fill (x2) circle[radius=0.05] node[below] {$x_2^n$};
\fill (2, -4/3) circle[radius=0.05] node[above right] {$x_3^n$};
\end{tikzpicture}

\caption{Illustration of the constructions used in the proof of Theorem~\ref{thm Ut_0,x_0}.}
\label{FigCase2}
\end{figure}
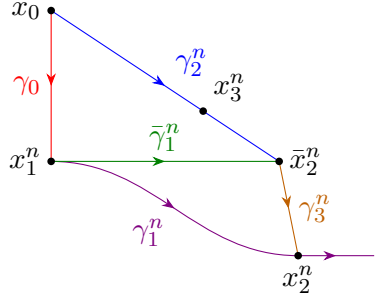

Notice that $\abs{x_1^n - x_0} \leq K_{\max} h_n$, $\abs{\gamma_1^n(t) - x_1^n} \leq K_{\max} h_n$ for every $t \in [t_1^n, t_1^n + h_n]$, and $\abs{\bar x_2^n - x_1^n} \leq K_{\max} h_n$. Hence, for all $t \in [t_1^n, t_1^n + h_n]$, we have $x_1^n, \gamma_1^n(t), x_2^n, \bar x_2^n \in \bar B(x_0, 2 K_{\max} h_n)$. Since $x_0 \in \bar\Omega$, for $n$ large enough, this ball is included in $\bar\Omega_{\epsilon/2} = \{x \in \mathbbm R^d \suchthat d_\Omega(x) \leq \frac{\epsilon}{2}\}$, and in particular $k_\epsilon(t, x) \geq \frac{1}{2} K_{\min}$ for every $(t, x) \in \mathbbm R_+ \times \bar B(x_0, 2 K_{\max} h_n)$.

Integrating \eqref{ODE cont1} on $[t_0, t_1^n]$, we get
\[
x_{1}^n-x_{0} = \int_{t_0}^{t_1^n} k_\epsilon(s, \gamma_{0}(s)) \diff s\, u_{0},
\]
and, proceeding similarly for \eqref{ODE cont2}, we get
\[
\Bar{x}_{2}^n- x_{1}^n = \int_{t_1^n}^{t_2^n} k_\epsilon(s, \Bar{\gamma}_{1}^n(s)) \diff s\, \Bar{u}_{1}^n.
\]
Denote the integrals in the right-hand side of the above equalities by $I_0^n$ and $I_1^n$, respectively. We have
\begin{equation*}
    \begin{aligned}
    \abs{\Bar{x}_{2}^n-x_{0}}^2 & = (I_0^n u_{0} + I_1^n \Bar{u}_{1}^n)\cdot (I_0^n u_{0} + I_1^n \Bar{u}_{1}^n)\\
    & = (I_0^n)^2 + (I_1^n)^2 + 2I_0^n I_1^n u_{0} \cdot \Bar{u}_{1}^n \\
    &=\abs{x_{1}^n-x_{0}}^2 + \abs*{\Bar{x}_{2}^n - x_{1}^n}^2 + 2 I_0^n I_1^n u_{0} \cdot \Bar{u}_{1}^n.
    \end{aligned}
\end{equation*}
We know that $\abs{\Bar{u}_{1}^{n}-u_{0}} \ge \epsilon$, which leads us to observe that there exists $\alpha \in (0, 1)$ such that $u_{0} \cdot \Bar{u}_{1}^n < \alpha$ for every $n\in \mathbbm N$. Thus
\[
\abs{\Bar{x}_{2}^n - x_{0}}^2 < \abs{x_{1}^n-x_{0}}^2 + \abs*{\Bar{x}_{2}^n - x_{1}^n}^2 + 2 \alpha I_0^n I_1^n.
\]
Define
\[
\rho := \sqrt{1 - (1-\alpha) \frac{K_{\min}^2}{8 K_{\max}^2}},
\]
then obviously $\rho<1$ and
\begin{align}
\abs{\Bar{x}_{2}^n - x_{0}}^2 & < \left(\abs{x_{1}^n-x_{0}} + \abs*{\Bar{x}_{2}^n - x_{1}^n}\right)^2 - 2 (1 - \alpha) I_0^n I_1^n \notag \\
 & = \left(1 - (1 - \alpha) \frac{2 I_0^n I_1^n}{\left(I_0^n + I_1^n\right)^2}\right)\left(\abs{x_{1}^n-x_{0}} + \abs*{\Bar{x}_{2}^n - x_{1}^n}\right)^2 \notag \\
 & \leq \rho^2 \left(\abs*{ x_{1}^n - x_{0}} + \abs*{\Bar{x}_{2}^n - x_{1}^n}\right)^2, \label{eq:dist-x2-x0}
\end{align}
where we use that $I_i^n \in \left[\frac{1}{2} h K_{\min}, h K_{\max}\right]$ for $i \in \{1, 2\}$. Let $u_{2}^n = \frac{\Bar{x}_{2}^n - x_{0}}{\abs{\Bar{x}_{2}^n - x_{0}}}$ (with the convention $u_2^n = 0$ if $\bar x_2^n = x_0$) and define $\gamma_{2}^n: [t_0, t_0 + \tau^n] \to \mathbbm R^d$ by
\begin{equation}\label{ODE cont3}
    \left \{
    \begin{aligned}
    \Dot{\gamma}_{2}^n(t) & = k_\epsilon(t, \gamma_{2}^n(t)) u_{2}^n \\  
    \gamma_{2}^n(t_0) & = x_{0},       
    \end{aligned}
		\right.
\end{equation}
where $\tau^n \geq 0$ is chosen so that\footnote{This is possible since, for $n$ large enough, all points in the segment from $x_0$ to $\bar x_2^n$ lie within $\Omega_{\epsilon/2}$, on which $k_\epsilon$ is lower bounded by $\frac{1}{2}K_{\min}$.} $\gamma_2^n(t_0 + \tau^n) = \bar x_2^n$.

\begin{claim}
As $n \to +\infty$, we have $\tau^n \leq 2 \rho h_n + o(h_n)$.
\end{claim}

\begin{proof}
Note that we have nothing to prove in the case $\bar x_2^n = x_0$, and hence we assume $\bar x_2^n \neq x_0$ in the sequel. If $\abs{\bar x_2^n - x_0} \leq \rho \abs{x_1^n - x_0}$, we let $x_3^n = \bar x_2^n$, otherwise we choose $x_{3}^n$ as the unique point in the segment $(x_{0}, \bar x_{2}^n)$ such that $\abs{x_{3}^n - x_{0}} = \rho \abs{x_{1}^n - x_{0}}$. In both cases, we have $\abs{x_{3}^n - x_{0}} = \bar\rho \abs{x_{1}^n - x_{0}}$ for some $\bar \rho \leq \rho$. Let $\tau_1^n$ be the time that $\gamma_{2}^n$ takes to reach the point $x_{3}^n$, i.e., $\gamma_{2}^n (t_0 + \tau_1^n) = x_{3}^n$. (Note that $\tau_1^n = \tau^n$ in the case $\abs{\bar x_2^n - x_0} \leq \rho \abs{x_1^n - x_0}$.)

We first show that $\tau_1^n \le \rho h_n + o(h_n)$. To obtain that, we observe, by integrating \eqref{ODE cont1} and \eqref{ODE cont3} and doing a change of variables, that
\begin{equation}
\label{eq:estimate-tau-1-n}
    \begin{aligned}
    \int_{t_0}^{t_0+\tau_1^n}k_\epsilon(s, \gamma_{2}^n(s)) \diff s & = \abs{x_3^n - x_0} = \bar\rho \abs{x_1^n - x_0} = \bar\rho \int_{t_0}^{t_0 + h_n} k_\epsilon(s, \gamma_0(s)) \diff s \\
		& = \int_{t_0}^{t_0 + \bar\rho h_n} k_\epsilon\left(t_0 + \frac{s - t_0}{\bar\rho}, \gamma_0\left(t_0 + \frac{s - t_0}{\bar\rho}\right)\right) \diff s \\
		& = \int_{t_0}^{t_0+\bar\rho h_n} k_\epsilon(s, \gamma_{2}^n (s))\diff s\\
    & \hphantom{{} = {} } + \int_{t_0}^{t_0+\bar\rho h_n} \left[k_\epsilon\left(t_0 + \frac{s - t_0}{\bar\rho}, \gamma_{0}\left(t_0+\frac{s-t_0}{\bar\rho}\right)\right) - k_\epsilon(s,\gamma_{2}^n(s))\right] \diff s.
    \end{aligned}
\end{equation}
Let us show that
\begin{equation}
\label{eq:limit-diff-k-eps}
\lim_{n \to +\infty} \frac{1}{h_n} \int_{t_0}^{t_0+\bar\rho h_n} \left[k_\epsilon\left(t_0 + \frac{s - t_0}{\bar\rho}, \gamma_{0}\left(t_0+\frac{s-t_0}{\bar\rho}\right)\right) - k_\epsilon(s,\gamma_{2}^n(s))\right] \diff s = 0
\end{equation}
Let $\delta > 0$. Since $k_\epsilon$ is continuous in $(t_0, x_0)$, there exists $\eta > 0$ such that $\abs{k_\epsilon(t, x) - k_\epsilon(t_0, x_0)} < \delta$ for every $(t, x) \in \mathbbm R_+ \times \mathbbm R^d$ satisfying $\abs{t - t_0} < \eta$ and $\abs{x - x_0} < \eta$. Since $h_n \to 0$ as $n \to +\infty$, there exists $N \in \mathbbm N$ such that, for every $n \geq N$, we have $h_n < \eta$ and $K_{\max} h_n < \eta$. Noticing that, for every $s \in [t_0, t_0 + \bar\rho h_n]$, we have $\abs*{\frac{s - t_0}{\bar\rho}} \leq h_n < \eta$, $\abs*{\gamma_{0}\left(t_0+\frac{s-t_0}{\bar\rho}\right) - x_0} \leq K_{\max} h_n < \eta$, $\abs{s - t_0} \leq \bar\rho h_n < \eta$, and $\abs{\gamma_{2}^n(s) - x_0} \leq \bar\rho K_{\max} h_n < \eta$, we deduce that, for $n \geq N$, we have, for every $s \in [t_0, t_0 + \bar\rho h_n]$,
\begin{gather*}
k_\epsilon(t_0, x_0) - \delta < k_\epsilon\left(t_0 + \frac{s - t_0}{\bar\rho}, \gamma_{0}\left(t_0+\frac{s-t_0}{\bar\rho}\right)\right) < k_\epsilon(t_0, x_0) + \delta, \\
k_\epsilon(t_0, x_0) - \delta < k_\epsilon(s,\gamma_{2}^n(s)) < k_\epsilon(t_0, x_0) + \delta.
\end{gather*}
Subtracting those inequalities, integrating on $s$ in $[t_0, t_0 + \bar\rho h_n]$, and dividing by $h_n$, we deduce that
\[
\frac{1}{h_n} \abs*{\int_{t_0}^{t_0+\bar\rho h_n} \left[k_\epsilon\left(t_0 + \frac{s - t_0}{\bar\rho}, \gamma_{0}\left(t_0+\frac{s-t_0}{\bar\rho}\right)\right) - k_\epsilon(s,\gamma_{2}^n(s))\right] \diff s} < 2 \bar\rho \delta,
\]
concluding the proof of \eqref{eq:limit-diff-k-eps}. Now, \eqref{eq:estimate-tau-1-n} and \eqref{eq:limit-diff-k-eps} imply that
\begin{equation}
\label{eq:k-gamma2}
\int_{t_0}^{t_0+\tau_1^n}k_\epsilon(s, \gamma_{2}^n(s)) \diff s = \int_{t_0}^{t_0+\bar\rho h_n} k_\epsilon(s, \gamma_{2}^n (s))\diff s + o(h_n).
\end{equation}

Define $F: [0, \tau^n] \to \mathbbm R_+$ by $F(t)=\int_{t_0}^{t_0 + t} k_\epsilon(s,\gamma_{2}^n(s)) \diff s$, then obviously $F$ is continuous and increasing, which implies that $F^{-1}$ is well-defined on the range of $F$. Since $\dot F(t) = k(t, \gamma_{2}^n(t))$, $F$ is $K_{\max}$-Lipschitz continuous and, since $\frac{\diff}{\diff t}F^{-1}(t) = \frac{1}{\dot{F}(F^{-1}(t))}$, we also deduce that $F^{-1}$ is $\frac{2}{K_{\min}}$-Lipschitz continuous.
Therefore, by \eqref{eq:k-gamma2}, we deduce that
\begin{equation*}
\tau_1^n = F^{-1}(F(\bar\rho h_n) + o(h_n)) = \bar\rho h_n + o(h_n) \leq \rho h_n + o(h_n).
\end{equation*}
This concludes the proof of the claim in the case $\abs{\bar x_2^n - x_0} \leq \rho \abs{x_1^n - x_0}$, since $\tau_1^n = \tau^n$ in that case.

Otherwise, we have $\bar\rho = \rho$ and $\abs{x_{3}^n - x_{0}} = \rho \abs{x_{1}^n - x_{0}}$, and thus, from \eqref{eq:dist-x2-x0}, we get
\[
\abs*{\Bar{x}_{2}^n-x_{0}} < \rho (\abs*{ x_{1}^n-x_{0}}+\abs*{\Bar{x}_{2}^n- x_{1}^n}) = \abs*{x_{3}^n - x_{0}} + \rho\abs*{\Bar{x}_{2}^n- x_{1}^n}.
\]
On the other hand, since $x_{3}^n$ belongs to the segment $(x_0, \bar x_2^n)$, we have $\abs*{\Bar{x}_{2}^n - x_{0}} = \abs*{\Bar{x}_{2}^n - x_{3}^n}\allowbreak +\abs*{x_{3}^n - x_{0}}$, hence the inequality $\abs*{\Bar{x}_{2}^n - x_{3}^n} \le \rho \abs*{\Bar{x}_{2}^n- x_{1}^n}$ holds. Suppose $\tau_2^n$ is the time the trajectory $\gamma_{2}^n$ takes to go from $x_{3}^n$ to $\Bar{x}_{2}^n$, i.e., $\gamma_{2}^n(t_0 + \tau_1^n + \tau_2^n) = \Bar{x}_{2}^n$, and note that $\tau^n = \tau_1^n + \tau_2^n$. As before, we compare the times between $\abs*{ \Bar{x}_{2}^n - x_{3}^n}$ and $\abs*{ \Bar{x}_{2}^n- x_{1}^n}$. Let $\beta \leq \rho$ be such that $\abs*{\Bar{x}_{2}^n - x_{3}^n}=\beta \abs*{ \Bar{x}_{2}^n- x_{1}^n}$. Proceeding similarly to \eqref{eq:estimate-tau-1-n}, we get
\begin{equation*}
    \begin{aligned}
    & \int_{0}^{\tau_2^n} k_\epsilon(s+t_0+\tau_1^n,\gamma_{2}^n(s+t_0+\tau_1^n)) \diff s = \abs*{\Bar{x}_{2}^n - x_{3}^n} \\
		{} = {} & \beta \abs*{ \Bar{x}_{2}^n- x_{1}^n} = \beta \int_{t_1^n}^{t_2^n} k_\epsilon(s, \bar\gamma_1^n(s)) \diff s \\
		{} = {} & \int_{0}^{\beta h_n} k_\epsilon\left(\frac{s}{\beta}+t_0+h_n,\Bar{\gamma}_{1}^n\left(\frac{s}{\beta}+t_0+h_n\right)\right) \diff s \\
		{} = {} & \int_{0}^{\beta h_n} k_\epsilon(s+t_0+\tau_1^n,\gamma_{2}^n(s+t_0+\tau_1^n)) \diff s\\
    &  +\int_{0}^{\beta h_n} \left[k_\epsilon\left(\frac{s}{\beta}+t_0+h_n,\Bar{\gamma}_{1}^n\left(\frac{s}{\beta}+t_0+h_n\right)\right)-k_\epsilon(s+t_0+\tau_1^n,\gamma_{2}^n(s+t_0+\tau_1^n))\right] \diff s.
    \end{aligned}
\end{equation*}
Proceeding as in the proof of \eqref{eq:limit-diff-k-eps}, we can show that the last integral in the above expression is an $o(h_n)$ as $n \to +\infty$, and thus
\[
\int_{t_0}^{t_0 + \tau_2^n} k_\epsilon(s+\tau_1^n,\gamma_{2}^n(s+\tau_1^n)) \diff s = \int_{t_0}^{t_0 + \beta h_n} k_\epsilon(s+\tau_1^n,\gamma_{2}^n(s+\tau_1^n)) \diff s + o(h_n).
\]
Defining $F(t) = \int_{t_0}^{t_0 + t} k_\epsilon(s+\tau_1^n,\gamma_{2}^n(s+\tau_1^n)) \diff s$ and arguing similarly to above, we deduce that $\tau_2^n = \beta h_n + o(h_n)$. Therefore the time $\tau^n$ to reach $\Bar{x}_{2}^n$ from $x_{0}$ satisfies
\[
\tau^n = (\rho+\beta) h_n + o(h_n) \leq 2 \rho h_n + o(h_n). \qedhere
\]
\end{proof}

Let us now compare the trajectories $\Bar{\gamma}_{1}^n$ and $\gamma_{1}^n$ on $[t_1^n, t_2^n]$. Let $\delta_{1}^n(t)= \gamma_{1}^n(t)-\Bar{\gamma}_{1}^n(t)$. Hence, from the ODEs satisfied by the trajectories $\Bar{\gamma}_{1}^n$ and $\gamma_{1}^n$, we have 
\begin{equation*}
    \begin{aligned}
    \delta_{1}^n(t)&=\int_{t_1^n}^{t} \Big[k_\epsilon(s,\gamma_{1}^n(s))u_{1}^n(s)-k_\epsilon(s,\Bar{\gamma}_{1}^n(s))\Bar{u}_{1}^n \Big]\diff s\\
    &=\int_{t_1^n}^{t} \Big[k_\epsilon(s,\gamma_{1}^n(s))-k_\epsilon(s,\Bar{\gamma}_{1}^n(s))\Big]u_{1}^n(s) \diff s
    +\int_{t_1^n}^{t}k_\epsilon(s,\Bar{\gamma}_{1}^n(s))(u_{1}^n(s)-\Bar{u}_{1}^n) \diff s.
    \end{aligned}
\end{equation*}
Since $u_{1}^n$ is the optimal control associated with $\gamma_1^n$, by Proposition~\ref{prop:consequences-of-Pontryagin-for-optimal-trajectories-of-ocp-epsilon}, it is Lipschitz continuous in $[t_1^n, t_1^n + \varphi_\epsilon(t_1^n, x_1^n)]$ and its Lipschitz constant is independent of $n$. Therefore, denoting by $L_\epsilon > 0$ the Lipschitz constant of $k_\epsilon$ with respect to its second variable (which is independent of the first variable) and $C > 0$ the Lipschitz constant of $u_1^n$, we have
\[
\abs*{\delta_{1}^n(t)} \le L_\epsilon \int_{t_1^n}^{t} \abs*{ \delta_{1}^n(s)} \diff s + C K_{\max} \int_{t_1^n}^{t} \abs*{s-t_1^n} \diff s,
\]
and hence, by using Grönwall's inequality,
\[
\abs*{\delta_{1}^n(t)} \le C K_{\max} \frac{(t-t_1^n)^2}{2} e^{L_\epsilon(t-t_1^n)}.
\]
In particular, if we set $t=t_1^n+h_n$, then
\[
\abs*{x_{2}^n- \Bar{x}_{2}^n} \le C K_{\max} \frac{h_n^2}{2} e^{L_\epsilon h_n} = O(h_n^2).
\]

Let $u_{3}^n = \frac{ x_{2}^n- \Bar{x}_{2}^n}{\abs*{ x_{2}^n- \Bar{x}_{2}^n}}$ (with the convention $x_3^n = 0$ if $x_2^n = \bar x_2^n$) and $\gamma_{3}^n$ be the solution of
\begin{equation}
    \left \{
    \begin{aligned}
    \Dot{\gamma}_{3}^n(t)&=k_\epsilon(t,\gamma_{3}^n(t)) u_{3}^n
    \\  
    \gamma_{3}^n(t_0 + \tau^n)&=\Bar{x}_{2}^n.       
    \end{aligned} \right.
\end{equation}
Using the lower bound $\frac{1}{2} K_{\min}$ on $k_\epsilon$ and the fact that $\abs*{x_{2}^n- \Bar{x}_{2}^n} = O(h_n^2)$, one can easily deduce that the time $\sigma^n$ from $\Bar{x}_{2}^n$ to $ x_{2}^n$ along $\gamma_3^n$ (i.e., the value $\sigma^n > 0$ such that $\gamma_3^n(t_0 + \tau^n + \sigma^n) = x_2^n$) satisfies $\sigma^n = O(h_n^2)$.

We have thus constructed two ways to go from $x_0$ to $x_2^n$. The first one is to choose the path containing $x_{0}$, $x_{1}^n$, and $x_{2}^n$, which corresponds to the concatenation of the trajectories $\gamma_0$ on $[t_0, t_1^n]$ and $\gamma_1^n$ on $[t_1^n, t_2^n]$, and the second one is the path containing $x_{0}$, $\Bar{x}_{2}^n$, and $x_{2}^n$, which corresponds to the concatenation of the trajectories $\gamma_2^n$ on $[t_0, t_0 + \tau^n]$ and $\gamma_3^n$ on $[t_0 + \tau^n, t_0 + \tau^n + \sigma^n]$. Letting $T_1^n$ and $T_2^n$ be the times for going from $x_0$ to $x_2^n$ along these two paths, respectively, we have, by construction and the claim, that $T_1^n = 2 h_n$ and $T_2^n = \tau^n + \sigma^n \leq 2\rho h_n + o(h_n)$. Hence, since $\rho < 1$, we have, for $n$ large enough, that $T_2^n < T_1^n$.

By definition of $\gamma_0$, we have $x_1^n = \gamma_0(t_1^n) = x_0 + h_n k(t_0, x_0) u_0 + o(h_n)$ and, using \eqref{eq:u0-in-W} and the fact that $\varphi_\epsilon$ is Lipschitz continuous (Proposition~\ref{prop:varphi-eps-loc-Lip}), we deduce that
\[
\varphi(t_0, x_{0}) = \varphi_\epsilon(t_1^n, x_{1}^n) + h_n + o(h_n).
\]
Moreover, since $\gamma_{1}^n \in \Opt_\epsilon(k_\epsilon, t_1^n, x_{1}^n)$, we have from \eqref{eq:DPP-epsilon} that $\varphi_\epsilon(t_1^n, x_1^n) = \varphi_\epsilon(t_2^n, x_2^n) + h_n$, yielding that
\[
\varphi(t_0, x_{0}) = \varphi(t_2^n, x_{2}^n) + T_1^n + o(h_n).
\]
On the other hand, since the path from $x_0$ to $x_2^n$ going through $\bar x_2^n$ is an admissible trajectory for $k_\epsilon$, we have, by \eqref{eq:DPP-epsilon}, that
$\varphi(t_0, x_{0}) \le T_2^n + \varphi_\epsilon(t_0 + T_2^n, x_{2}^n)$. Hence
\begin{equation}\label{time contr}
    \varphi_\epsilon(t_2^n, x_{2}^n) + T_1^n + o(h_n) \le T_2^n + \varphi_\epsilon(t_0 + T_2^n, x_{2}^n).
\end{equation}
We also know that $t_0 + T_2^n < t_0 + T_1^n = t_2^n$ for $n$ large enough. Therefore\footnote{Strictly speaking, Proposition~\ref{PropOCP-2}\ref{PropOCP-LowerBound} only applies to $\varphi$, and not to $\varphi_\epsilon$. However, one can still get its conclusion by arguing as follows. Let $\widehat\OCP$ be defined as in the proof of Proposition~\ref{prop:varphi-eps-loc-Lip} with $\eta = \frac{\epsilon}{2}$ and consider its value function $\widehat\varphi$. By the arguments provided in that proof, $\widehat\varphi$ satisfies Proposition~\ref{PropOCP-2}\ref{PropOCP-LowerBound} and $\widehat\varphi$ and $\varphi_\epsilon$ coincide in $\bar\Omega_{\epsilon/2}$. The conclusion then follows since all points involved here belong to $\bar\Omega_{\epsilon/2}$ for $n$ large enough. Note that the constant $c > 0$ depends on $\epsilon$.}, by Proposition~\ref{PropOCP-2}\ref{PropOCP-LowerBound}, there exists a constant $c > 0$ such that
\begin{equation*}
    \varphi_\epsilon(t_2^n,x_{2}^n) > \varphi_\epsilon(t_0 +T_2^n,x_{2}^n) + (c-1) (t_2^n - t_0 - T_2^n) = \varphi_\epsilon(t_0+T_2^n, x_{2}^n) + (c-1) (T_1^n - T_2^n),
\end{equation*}
and, using \eqref{time contr}, we get $(c-1)(T_1^n-T_2^n)+T_1^n+o(h_n)\le T_2^n$, which leads to 
\[
2 h_n + o(h_n) = T_1^n+o(h_n) \le T_2^n \leq 2\rho h_n + o(h_n).
\]
Divide above inequality by $h_n$ to observe that
\[
2+o(1) \leq 2\rho+o(1).
\]
Finally by letting $n \to +\infty$, we conclude that $\rho\ge 1$, which is a contradiction. Therefore Case~2 will never happen and this ends the proof.
\end{proof}

We conclude this section with a technical result showing that, for $x \in \partial\Omega \setminus \Gamma$, if all directions of $\mathcal W(t, x)$ point to the inside of the domain, then no optimal trajectories starting at time $0$ are close to $x$ at time $t$. For that purpose, we introduce 

\begin{proposition}
\label{prop:no-opt-in-neighborhood}
Consider the optimal control problem $\OCP(k)$ and assume that \HypoOmega, \ref{HypoOCP-k-Bound}, and \ref{HypoOCP-k-Lip} hold. Let $x \in \partial\Omega \setminus \Gamma$, $t > 0$, and assume that there exists $w \in \mathcal W(t, x)$ such that $w \cdot \mathbf n(x) < 0$. Then there exists a neighborhood $N$ of $x$ in $\bar\Omega$ such that, for every $x_0 \in \bar\Omega$ and $\gamma \in \Opt(k, 0, x_0)$, we have $\gamma(t) \notin N$.
\end{proposition}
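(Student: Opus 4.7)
I would argue by contradiction. Assume that, for every neighborhood $N$ of $x$ in $\bar\Omega$, one can find $x_0 \in \bar\Omega$ and $\gamma \in \Opt(k, 0, x_0)$ with $\gamma(t) \in N$. This yields sequences $(x_{0,n})_{n\in\mathbbm N} \subset \bar\Omega$ and $\gamma_n \in \Opt(k, 0, x_{0,n})$ such that $\gamma_n(t) \to x$ as $n \to +\infty$. Since each $\gamma_n$ is $K_{\max}$-Lipschitz and takes values in the compact set $\bar\Omega$, Arzelà--Ascoli provides, after extracting a subsequence, $\gamma^* \in \Lip_{K_{\max}}(\bar\Omega)$ such that $\gamma_n \to \gamma^*$ uniformly on compact subsets of $\mathbbm R_+$; in particular $x_{0,n} = \gamma_n(0) \to \gamma^*(0) =: x_0^*$ and $\gamma^*(t) = x$.

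Next I would verify that $\gamma^* \in \Opt(k, 0, x_0^*)$, following exactly the limit argument carried out in the proof of Proposition~\ref{prop:equilibrium-support}: passing to the limit in $\abs{\gamma_n(b) - \gamma_n(a)} \leq \int_a^b k(s, \gamma_n(s))\diff s$ shows $\gamma^* \in \Adm(k)$; moreover $\gamma_n(\varphi(0, x_{0,n})) \in \Gamma$, so by continuity of $\varphi$ (Proposition~\ref{varphi is Lipschitz}) and closedness of $\Gamma$, we get $\gamma^*(\varphi(0, x_0^*)) \in \Gamma$, whence $\tau(0, \gamma^*) \leq \varphi(0, x_0^*)$, the reverse inequality being immediate. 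Since $x \notin \Gamma$, we have $t \in (0, \varphi(0, x_0^*))$. Proposition~\ref{PropUSingleElement} then asserts that $\mathcal U(t, x)$ is a singleton, and Theorem~\ref{thm Ut_0,x_0} gives $\mathcal U(t, x) = \mathcal W(t, x)$; since $w \in \mathcal W(t, x)$ by assumption, we conclude $\mathcal W(t, x) = \mathcal U(t, x) = \{w\}$. Letting $u^*$ denote the optimal control associated with $\gamma^*$, Corollary~\ref{coro:smooth} ensures $\gamma^* \in \mathbf C^1([0, \varphi(0, x_0^*)]; \bar\Omega)$ and $u^* \in \Lip([0, \varphi(0, x_0^*)]; \mathbbm S^{d-1})$. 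Because the truncation of $\gamma^*$ to $[t, +\infty)$ (extended constantly on $[0,t]$) lies in $\Opt(k, t, x)$ with control value $u^*(t)$ at time $t$, we have $u^*(t) \in \mathcal U(t, x) = \{w\}$, so $\dot\gamma^*(t) = k(t, x)\, w$.

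Finally I would derive the contradiction from the fact that $w$ points strictly inward. Let $W$ be the neighborhood of $\partial\Omega$ from Proposition~\ref{prop:OmegaC11}\ref{item:W-dpm}, on which $d^{\pm}_{\partial\Omega}$ is $\mathbf C^{1,1}$ with $\nabla d^{\pm}_{\partial\Omega}(x) = \mathbf n(x)$. Since $x \in \partial\Omega$ and $\gamma^*$ is $\mathbf C^1$ near $t$, there exists $s_0 \in (0, t)$ such that $\gamma^*(t - s) \in W$ for every $s \in [0, s_0]$, and
\[
d^{\pm}_{\partial\Omega}(\gamma^*(t - s)) = d^{\pm}_{\partial\Omega}(x) - s\, \dot\gamma^*(t) \cdot \mathbf n(x) + o(s) = -s\, k(t, x)\, w \cdot \mathbf n(x) + o(s)
\]
as $s \to 0^+$. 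Because $k(t, x) \geq K_{\min} > 0$ and $w \cdot \mathbf n(x) < 0$, the right-hand side is strictly positive for all sufficiently small $s > 0$, showing that $\gamma^*(t - s) \notin \bar\Omega$. This contradicts the admissibility of $\gamma^*$ and concludes the proof. The main subtlety is the passage to the limit establishing $\gamma^* \in \Opt(k, 0, x_0^*)$ with $\gamma^*(t) = x$, after which the inward-pointing direction $w$ instantly forces the limit trajectory to exit $\bar\Omega$ just before time $t$.
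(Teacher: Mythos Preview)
Your proof is correct and follows essentially the same approach as the paper's: argue by contradiction, extract via Arzel\`a--Ascoli a limit optimal trajectory $\gamma^*$ passing through $x$ at time $t$, use Proposition~\ref{PropUSingleElement} and Theorem~\ref{thm Ut_0,x_0} to identify $\dot\gamma^*(t) = k(t,x)\,w$, and then differentiate $d^{\pm}_{\partial\Omega}\circ\gamma^*$ at $t$ to force $\gamma^*$ out of $\bar\Omega$ just before time $t$. The only cosmetic differences are that you spell out the limit optimality argument (the paper just cites ``straightforward arguments based on the continuity of the value function'') and you phrase the final step via the backward variable $s>0$ rather than $h\in[h_0,0)$.
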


\begin{proof}
Assume, to obtain a contradiction, that there exist sequences $(x_{0, n})_{n \in \mathbbm N}$ and $(\gamma_n)_{n \in \mathbbm N}$ with $x_{0, n} \in \bar\Omega$ and $\gamma_n \in \Opt(k, 0, x_{0, n})$ for every $n \in \mathbbm N$ and such that $\gamma_n(t) \to x$ as $n \to +\infty$. Since $\bar\Omega$ is compact and $(\gamma_n)_{n \in \mathbbm N}$ is a sequence of functions which are $K_{\max}$-Lipschitz continuous, applying Arzelà--Ascoli Theorem, we deduce that there exist $x_0 \in \bar\Omega$ and $\gamma \in \Lip_{K_{\max}}(\bar\Omega)$ such that, up to extracting subsequences (which we still denote by $(x_{0, n})_{n \in \mathbbm N}$ and $(\gamma_n)_{n \in \mathbbm N}$ for simplicity), we have, as $n \to +\infty$, that $x_{0, n} \to x_0$ and $\gamma_n \to \gamma$ uniformly on compact time intervals. By straightforward arguments based on the continuity of the value function, we deduce that $\gamma \in \Opt(k, 0, x_0)$, and in addition we have $\gamma(t) = x$.

Recall that, by Corollary~\ref{coro:smooth}, we have $\gamma \in \mathbf C^1([0, T]; \bar\Omega)$, where $T = \varphi(0, x_0)$ and $\varphi$ is the value function of $\OCP(k)$. Since $t > 0$ and $x \notin \Gamma$, we have $t \in (0, T)$, and thus $\gamma$ is differentiable at $t$. Moreover, by Proposition~\ref{PropUSingleElement} and Theorem~\ref{thm Ut_0,x_0}, the set $\mathcal W(t, x)$ contains exactly one element, which we denote by $w_0 \in \mathbbm S^{d-1}$, and thus, by assumption, we have $w_0 \cdot \mathbf n(x) < 0$. By Definition~\ref{def:U} and Theorem~\ref{thm Ut_0,x_0}, we deduce also that $\dot\gamma(t) = k(t, x) w_0$.

Let $\alpha: h \mapsto d^{\pm}_{\partial\Omega}(\gamma(t + h))$ be defined on an open neighborhood of $0$ in $\mathbbm R$. Then $\alpha(0) = d^{\pm}_{\partial\Omega}(x) = 0$ and $\dot\alpha(0) = k(t, x) \mathbf n(x) \cdot w_0 < 0$. In particular, there exists $h_0 \in [-t, 0)$ such that, for every $h \in [h_0, 0)$, we have $\alpha(h) > 0$, and thus $\gamma(t + h) \notin \bar\Omega$, which contradicts the fact that $\gamma$ takes values in $\bar\Omega$. This contradiction yields the conclusion.
\end{proof}

\subsection{The normalized gradient}
\label{sec:normalized-grad}

Now that Theorem~\ref{thm Ut_0,x_0} characterizes the set of optimal directions $\mathcal U(t_0, x_0)$ as the set $\mathcal W(t_0, x_0)$ of directions of maximal descent of $\varphi$, we provide the following definition, which is motivated by Proposition~\ref{PropWNormalizedGrad}.

\begin{definition}
\label{def:normalized-gradient}
Consider the optimal control problem $\OCP(k)$ and its value function $\varphi$ and assume that \HypoOmega, \ref{HypoOCP-k-Bound}, and \ref{HypoOCP-k-Lip} hold. Let $(t_0,x_0)\in \mathbbm R_+\times \bar{\Omega}$. If $\mathcal{W}(t_0,x_0)$ contains exactly one element, we denote this element by $-\widehat{\nabla\varphi}(t_0,x_0)$, and call $\widehat{\nabla\varphi}(t_0,x_0)$ the \emph{normalized gradient} of $\varphi$ at $(t_0, x_0)$.
\end{definition}

As a consequence of Proposition~\ref{PropUSingleElement} and Theorem~\ref{thm Ut_0,x_0}, we immediately obtain the following characterization of optimal controls.

\begin{corollary}\label{coro normalized}
Consider the optimal control problem $\OCP(k)$ and its value function $\varphi$ and assume that \HypoOmega, \ref{HypoOCP-k-Bound}, and \ref{HypoOCP-k-Lip} hold. Let $(t_0,x_0)\in \mathbbm R_+ \times \bar{\Omega}$, $\gamma\in \Opt(k, t_0, x_0)$, and $T = \varphi(t_0, x_0)$. Then for every $t \in (t_0, t_0 + T)$, $\varphi$ admits a normalized gradient at $(t,\gamma(t))$ and
\[
\Dot{\gamma}(t)= -k(t,\gamma(t)) \widehat{\nabla \varphi}(t,\gamma(t)).
\]
\end{corollary}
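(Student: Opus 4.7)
The plan is to derive the corollary as a direct consequence of the results already established, chaining together Proposition~\ref{PropUSingleElement}, Theorem~\ref{thm Ut_0,x_0}, Proposition~\ref{PropWNormalizedGrad}-flavored uniqueness, Corollary~\ref{coro:smooth}, and the dynamic programming principle from Proposition~\ref{PropOCP-1}\ref{PropOCP-DPP}.

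Fix $t \in (t_0, t_0 + T)$ and set $x = \gamma(t)$. The first step is to observe that $x \in \bar\Omega \setminus \Gamma$, so that all relevant sets are nonempty. By Proposition~\ref{PropUSingleElement}, $\mathcal U(t, x)$ contains exactly one element, and by Theorem~\ref{thm Ut_0,x_0} we have $\mathcal U(t, x) = \mathcal W(t, x)$. Consequently $\mathcal W(t, x)$ is a singleton, which by Definition~\ref{def:normalized-gradient} means that $\varphi$ admits a normalized gradient at $(t, x)$, and the unique element of $\mathcal W(t, x)$ is $-\widehat{\nabla\varphi}(t, x)$.

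Next, I would identify this unique element with $u(t)$, where $u$ is the optimal control associated with $\gamma$. Using the dynamic programming principle (as in the proof of Proposition~\ref{PropUSingleElement}), the restriction of $\gamma$ to $[t, t_0 + T]$, suitably extended by constants, is an optimal trajectory from $(t, x)$ whose optimal control on $[t, t_0 + T]$ coincides with the restriction of $u$. By Corollary~\ref{coro:smooth}, $u$ is continuous on $[t_0, t_0 + T]$, so $u(t)$ is well defined and, by Definition~\ref{def:U}, belongs to $\mathcal U(t, x)$. Combining with the singleton property gives $u(t) = -\widehat{\nabla\varphi}(t, x)$.

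Finally, since $\gamma \in \Adm(k)$ and, again by Corollary~\ref{coro:smooth}, $\gamma$ is $\mathbf C^1$ on $[t_0, t_0 + T]$ with $\dot\gamma(s) = k(s, \gamma(s)) u(s)$ everywhere on that interval, evaluation at $s = t$ yields
\[
\dot\gamma(t) = k(t, \gamma(t)) u(t) = -k(t, \gamma(t)) \widehat{\nabla\varphi}(t, \gamma(t)),
\]
as desired. There is no real obstacle here: the entire content of the statement has been packaged into the earlier results, and the only thing to be careful about is to invoke the $\mathbf C^1$ regularity from Corollary~\ref{coro:smooth} so that $u(t)$ has a pointwise meaning at the interior time $t$, rather than only almost everywhere.
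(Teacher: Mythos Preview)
Your proof is correct and follows exactly the route the paper indicates: the paper states the corollary as an immediate consequence of Proposition~\ref{PropUSingleElement} and Theorem~\ref{thm Ut_0,x_0}, and you have simply spelled out the (straightforward) details, including the appeal to Corollary~\ref{coro:smooth} to ensure $u(t)$ and $\dot\gamma(t)$ make sense pointwise.
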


Combining the above result with Corollary~\ref{coro:smooth}, we deduce that, for every optimal trajectory $\gamma$, the map $t \mapsto \widehat{\nabla \varphi}(t, \gamma(t))$ is Lipschitz continuous as long as $\gamma(t)\in \bar{\Omega} \setminus \Gamma$ and $t$ is larger than the initial time of $\gamma$. However, this provides no information on the regularity of $(t, x) \mapsto \widehat{\nabla \varphi}(t, x)$. We are interested in proving the continuity of this map on its domain of definition. For that purpose, we first prove the upper semi-continuity of the set-valued map $\mathcal U$ (we refer to \cite[Definition~1.4.1]{AubinFranskowska} for the definition of upper semi-continuity for set-valued maps).

\begin{proposition}\label{U upper semi}
Consider the optimal control problem $\OCP(k)$ and assume that \HypoOmega, \ref{HypoOCP-k-Bound}, and \ref{HypoOCP-k-Lip} hold. Let $\mathcal{U}:\mathbbm R_+\times \bar{\Omega} \rightrightarrows \mathbbm S^{d-1}$ be the set valued map introduced in Definition~\ref{def:U}. Then $\mathcal{U}$ is upper semi-continuous.
\end{proposition}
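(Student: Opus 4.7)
Since $\mathbbm S^{d-1}$ is compact, it suffices to prove the closed-graph property: if $(t_n, x_n) \to (t_0, x_0)$ in $\mathbbm R_+ \times \bar\Omega$ and $u_n \in \mathcal{U}(t_n, x_n)$ converges to some $u_0 \in \mathbbm S^{d-1}$, then $u_0 \in \mathcal{U}(t_0, x_0)$. The case $x_0 \in \Gamma$ is easy: here $\varphi(t_0, x_0) = 0$, so the optimal-control interval $[t_0, t_0 + \varphi(t_0, x_0)]$ reduces to $\{t_0\}$, and a control may take any value in $\mathbbm S^{d-1}$ at this single point; hence $\mathcal{U}(t_0, x_0) = \mathbbm S^{d-1}$ and the inclusion is automatic. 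We therefore assume $x_0 \in \bar\Omega \setminus \Gamma$.

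For each $n$, let $\gamma_n \in \Opt(k, t_n, x_n)$ together with an associated optimal control $\tilde u_n \in \Lip([t_n, t_n + \varphi(t_n, x_n)]; \mathbbm S^{d-1})$ be such that $\tilde u_n(t_n) = u_n$; their existence is guaranteed by Definition~\ref{def:U} together with Corollary~\ref{coro:smooth}. The key uniform estimates are the following: the trajectories $\gamma_n$ are $K_{\max}$-Lipschitz on $\mathbbm R_+$ (from $\dot\gamma_n = k(t, \gamma_n)\tilde u_n$ with $k \leq K_{\max}$ and $\abs{\tilde u_n} \leq 1$), and, by Corollary~\ref{coro:smooth} combined with Proposition~\ref{prop:consequences-of-Pontryagin-for-optimal-trajectories-of-ocp-epsilon}\ref{item:system-on-gamma-u} (applied with a common $\epsilon > 0$ provided by Theorem~\ref{thm:Opt-eps-equals-Opt}), the controls $\tilde u_n$ are Lipschitz with a constant $L_\epsilon$ independent of $n$. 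I will extend each $\tilde u_n$ to $\mathbbm R_+$ by setting it constant equal to $u_n$ on $[0, t_n]$ and constant equal to $\tilde u_n(t_n + \varphi(t_n, x_n))$ afterwards; the extension remains in $\mathbbm S^{d-1}$. Since $\varphi$ is continuous (Proposition~\ref{varphi is Lipschitz}), and $x_0 \notin \Gamma$, we have $\varphi(t_n, x_n) \to \varphi(t_0, x_0) > 0$; hence, for any compact $I \subset (t_0, t_0 + \varphi(t_0, x_0))$, the controls $\tilde u_n$ restricted to $I$ form a uniformly Lipschitz family for $n$ large enough.

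Applying Arzelà--Ascoli, up to a subsequence, $\gamma_n \to \gamma^\ast$ uniformly on compact subsets of $\mathbbm R_+$ and $\tilde u_n \to u^\ast$ uniformly on compact subsets of $(t_0, t_0 + \varphi(t_0, x_0))$. Passing to the limit in $\dot\gamma_n(t) = k(t, \gamma_n(t))\tilde u_n(t)$ (using continuity of $k$), one obtains that $\gamma^\ast \in \Adm(k)$ with control $u^\ast$, and $\gamma^\ast(t_0) = x_0$. Since $\gamma_n(t_n + \varphi(t_n, x_n)) \in \Gamma$ and $\Gamma$ is closed, passing to the limit yields $\gamma^\ast(t_0 + \varphi(t_0, x_0)) \in \Gamma$, so $\tau(t_0, \gamma^\ast) \leq \varphi(t_0, x_0)$; the reverse inequality follows from the very definition of $\varphi$ as an infimum over admissible trajectories. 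Hence $\gamma^\ast \in \Opt(k, t_0, x_0)$.

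It remains to check that $u^\ast(t_0) = u_0$. By the uniform $L_\epsilon$-Lipschitz bound on the $\tilde u_n$ (valid on $[t_n, t_n + \varphi(t_n, x_n)]$, which contains $t_0$ for $n$ large enough since $t_n \to t_0$ and $\varphi(t_n, x_n) \to \varphi(t_0, x_0) > 0$), we have
\[
\abs*{\tilde u_n(t_0) - \tilde u_n(t_n)} \leq L_\epsilon \abs{t_0 - t_n} \xrightarrow[n \to +\infty]{} 0,
\]
and by uniform convergence $\tilde u_n(t_0) \to u^\ast(t_0)$; combining, $u^\ast(t_0) = \lim_n \tilde u_n(t_n) = \lim_n u_n = u_0$. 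This shows $u_0 \in \mathcal{U}(t_0, x_0)$ and closes the proof. The main technical subtlety is ensuring that the Lipschitz constants of the optimal controls are uniform in $n$, which is why Corollary~\ref{coro:smooth} (itself resting on the penalization machinery of Section~\ref{sec:penalized} and Theorem~\ref{thm:Opt-eps-equals-Opt}) is crucial here; without such a uniform estimate, one could only extract weak-$\ast$ limits of the controls, and then the identification $u^\ast(t_0) = u_0$ would break down.
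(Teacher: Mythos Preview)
Your proof is correct and follows essentially the same route as the paper's: closed-graph characterization via compactness of $\mathbbm S^{d-1}$, Arzel\`a--Ascoli on trajectories and extended controls using the uniform Lipschitz bound from Corollary~\ref{coro:smooth}, and passage to the limit. One small slip: you restrict the Arzel\`a--Ascoli argument for the controls to compact subsets of the \emph{open} interval $(t_0, t_0 + \varphi(t_0, x_0))$, so $u^\ast$ is a priori only defined there, yet you then evaluate $u^\ast(t_0)$; since your extended controls are in fact uniformly $L_\epsilon$-Lipschitz on all of $\mathbbm R_+$ (constant extensions do not increase the Lipschitz constant), you may take the convergence on compact subsets of $\mathbbm R_+$ instead---as the paper does---and the issue disappears.
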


\begin{proof}
Since $\mathbbm S^{d-1}$ is a compact set, it suffices to show that $\mathcal{U}$ has a closed graph (see, e.g., \cite[Proposition~1.4.8]{AubinFranskowska}).
Let $(t_n, x_n)_{n \in \mathbbm N}$ be a sequence in $\mathbbm R_+ \times \bar\Omega$ converging as $n \to +\infty$ to some $(t_0, x_0) \in \mathbbm R_+ \times \bar\Omega$, $(\bar{u}_{n})_{n\in \mathbbm N}$ be a sequence such that $\bar{u}_{n} \in \mathcal{U}(t_n, x_n)$ for every $n \in \mathbbm N$ and $\bar u_n \to \bar u_0$ as $n \to +\infty$ for some $\bar{u}_{0}\in \mathbbm S^{d-1}$. Since $\bar{u}_{n} \in \mathcal{U}(t_n, x_n)$ for every $n \in \mathbbm N$, there exists a sequence $(\gamma_n)_{n \in \mathbbm N}$ of optimal trajectories with $\gamma_{n} \in \Opt(k, t_n, x_n)$ for every $n \in \mathbbm N$ and a corresponding sequence of the associated optimal controls $(u_{n})_{n \in \mathbbm N}$ with $u_n \in \Lip([t_n, t_n + \varphi(t_n, x_n)]; \mathbbm S^{d-1})$ and $u_{n}(t_n) = \bar{u}_{n}$ for every $n \in \mathbbm N$. From Corollary~\ref{coro:smooth}, up to modifying $u_n$ outside of the interval $[t_n, t_n + \varphi(t_n, x_n)]$, the sequences $(\gamma_n)_{n \in \mathbbm N}$ and $(u_n)_{n \in \mathbbm N}$ are sequences of Lipschitz continuous functions with Lipschitz constants independent of $n$, and thus, by Arzelà--Ascoli Theorem, one finds elements $\gamma^* \in \Lip(\bar\Omega)$ and $u^* \in \Lip(\mathbbm S^{d-1})$ such that, up to a subsequence, $\gamma_{n}\to \gamma^*$ and $u_{n}\to u^*$ uniformly on compact time intervals. In particular,
\[
u^*(t_0)=\lim_{n\to \infty} u_{n}(t_n)=\lim_{n\to \infty} \bar{u}_{n}=\bar{u}_{0}
\]
and
\[
\gamma^*(t_0)=\lim_{n\to \infty} \gamma_{n}(t_n)=\lim_{n\to \infty} x_{n}=x_{0}.
\]
By using the dynamic programming principle from Proposition~\ref{PropOCP-1}\ref{PropOCP-DPP} and the Lipschtiz continuity of the value function from Proposition~\ref{varphi is Lipschitz}, one observes that the restriction of $u^*$ to $[t_0, t_0 + \varphi(t_0, x_0)]$ is the optimal control corresponding to the optimal trajectory $\gamma^*$. Hence $\bar{u}_{0} = u^\ast(t_0) \in \mathcal{U}(t_0,x_0)$, concluding the proof that $\mathcal{U}$ has a closed graph.
\end{proof}

On the set of points where a set-valued map is single-valued, upper semi-continuity coincides with standard continuity of single-valued functions. As an immediate consequence of this fact, Proposition~\ref{U upper semi}, and Theorem~\ref{thm Ut_0,x_0}, we have the following result.

\begin{corollary}
\label{coro:norm-grad-continuous}
Consider the optimal control problem $\OCP(k)$ and its value function $\varphi$ and assume that \HypoOmega, \ref{HypoOCP-k-Bound}, and \ref{HypoOCP-k-Lip} hold. Let $\mathcal{W}:\mathbbm R_+\times \bar{\Omega} \rightrightarrows \mathbbm S^{d-1}$ be the set valued map introduced in Definition~\ref{DefW} and $\widehat{\nabla\varphi}$ be the normalized gradient of $\varphi$. Then $\mathcal{W}$ is upper semi-continuous and $\widehat{\nabla\varphi}$ is a continuous function on the set where it is defined.
\end{corollary}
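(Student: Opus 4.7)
The plan is to observe that Corollary~\ref{coro:norm-grad-continuous} is essentially an assembly of results already established, so the proof should be brief. The upper semi-continuity of $\mathcal{W}$ is immediate once one identifies $\mathcal{W}$ with $\mathcal{U}$, and the continuity of $\widehat{\nabla\varphi}$ follows from the standard fact that an upper semi-continuous set-valued map with compact values reduces to a continuous single-valued map on the set where it is single-valued.

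First, I would argue the upper semi-continuity of $\mathcal{W}$. By Theorem~\ref{thm Ut_0,x_0}, for every $(t_0, x_0) \in \mathbbm R_+ \times \bar\Omega$ we have the set equality $\mathcal{W}(t_0, x_0) = \mathcal{U}(t_0, x_0)$ (the theorem gives this with some $\epsilon_0$-dependence only for $\mathcal W_\epsilon$, but the equality $\mathcal W = \mathcal U$ does not involve $\epsilon$). Hence $\mathcal{W}$ and $\mathcal{U}$ coincide as set-valued maps from $\mathbbm R_+ \times \bar\Omega$ to $\mathbbm S^{d-1}$, so the upper semi-continuity of $\mathcal{W}$ follows directly from that of $\mathcal{U}$ proved in Proposition~\ref{U upper semi}.

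Next, I would derive the continuity of $\widehat{\nabla\varphi}$. Let $D \subset \mathbbm R_+ \times \bar\Omega$ denote the set of points at which $\mathcal{W}$ is single-valued, which by Definition~\ref{def:normalized-gradient} is precisely the domain of $\widehat{\nabla\varphi}$. Fix $(t_0, x_0) \in D$, write $\mathcal{W}(t_0, x_0) = \{-\widehat{\nabla\varphi}(t_0, x_0)\}$, and consider a sequence $(t_n, x_n)_{n \in \mathbbm N}$ in $D$ converging to $(t_0, x_0)$. The vectors $-\widehat{\nabla\varphi}(t_n, x_n)$ all lie in the compact set $\mathbbm S^{d-1}$, so from every subsequence one may extract a further subsequence converging to some $w^\ast \in \mathbbm S^{d-1}$. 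Since $\mathcal W = \mathcal U$ is upper semi-continuous with compact (hence closed) values in the compact space $\mathbbm S^{d-1}$, it has closed graph (as exploited in the proof of Proposition~\ref{U upper semi}), so $w^\ast \in \mathcal{W}(t_0, x_0) = \{-\widehat{\nabla\varphi}(t_0, x_0)\}$. Thus every such subsequential limit equals $-\widehat{\nabla\varphi}(t_0, x_0)$, which forces $-\widehat{\nabla\varphi}(t_n, x_n) \to -\widehat{\nabla\varphi}(t_0, x_0)$, establishing continuity.

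There is no real obstacle here: the whole content of the corollary has already been paid for, once in Theorem~\ref{thm Ut_0,x_0} (which fuses the control-theoretic set $\mathcal U$ with the analytic set $\mathcal W$) and once in Proposition~\ref{U upper semi} (whose Arzelà--Ascoli argument provides the closed-graph property). The only minor care needed is to make the standard upper-semi-continuous-to-continuous reduction explicit, which is what the two-step argument above accomplishes.
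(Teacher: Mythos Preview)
Your proof is correct and follows exactly the paper's approach: the corollary is stated there without a formal proof, merely as an immediate consequence of Theorem~\ref{thm Ut_0,x_0} (which gives $\mathcal W = \mathcal U$), Proposition~\ref{U upper semi} (upper semi-continuity of $\mathcal U$), and the general fact that an upper semi-continuous set-valued map is continuous wherever it is single-valued. You have simply spelled out this last step via the standard subsequence/closed-graph argument, which is fine.
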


\section{The MFG system}\label{sec MFG system}

We now turn to the characterization of equilibria $Q \in \mathcal P(\mathbf C(\bar\Omega))$ of a mean field game $\MFG(K)$ through a system of PDEs, known as the MFG system, consisting of a continuity equation on the density of agents $t \mapsto m_t$, defined through the relation $m_t=e_{t\#}Q$, coupled with the Hamilton--Jacobi equation \eqref{H-J equation} on the value function $\varphi$ of the optimal control problem $\OCP(k)$, where $k$ is defined from $K$ and $Q$ by setting $k(t, x) = K(m_t, x)$. The main difficulty in this characterization lies within the continuity equation for $m_t$, and more precisely on the characterization of the corresponding velocity field. Corollary~\ref{coro normalized} suggests that such velocity field should be given by the opposite of the normalized gradient of $\varphi$ multiplied by the dynamics $k$.

Corollary~\ref{coro:norm-grad-continuous} states the continuity of the normalized gradient $\widehat{\nabla\varphi}$ on the set of points of $\mathbbm R_+ \times \bar\Omega$ where it is defined. We will now show that, for the purposes of studying the equilibria of mean field games, this set is quite ``large''. More precisely, consider the mean field game $\MFG(K)$ and an equilibrium $Q \in \mathcal P(\mathbf C(\bar \Omega))$ for this game. We will prove that, for every fixed $t > 0$, the set of points $x \in \bar\Omega \setminus \Gamma$ at which $\widehat{\nabla\varphi}$ does not exist has $m_t$ measure zero, where $m_t$ is the evaluation at time $t$ of the equilibrium measure $Q$.

For that purpose, let us introduce the set
\begin{equation}\label{Upsilon}
    \Upsilon=\left\{(t,x)\in \mathbbm R_+^\ast \times (\bar{\Omega} \setminus \Gamma) \suchthat \exists t_0\in [0,t),\, \exists x_{0}\in \bar{\Omega},\, \exists \gamma\in \Opt(k, t_0, x_{0})\text{ s.t.\ } \gamma(t) = x\right\}.
\end{equation}
In other words, $\Upsilon$ contains all points $(t,x)\in \mathbbm R_+^\ast \times (\bar{\Omega} \setminus \Gamma)$ which are strictly between the starting and the final points of an optimal trajectory. In particular, it follows from Corollary~\ref{coro normalized} that $\widehat{\nabla \varphi}(t,x)$ exists for every $(t,x)\in \Upsilon$, and, by Corollary~\ref{coro:norm-grad-continuous}, $\widehat{\nabla\varphi}$ is continuous in $\Upsilon$. We also introduce, for $t > 0$, the set
\begin{equation}
\label{Upsilon_t}
\Upsilon_{t}=\left\{ x\in \bar{\Omega} \setminus \Gamma \suchthat (t,x)\in \Upsilon \right\}.
\end{equation}

\begin{proposition}\label{set grad coninuity}
Consider the mean field game $\MFG(K)$ under the assumptions \HypoOmega, \ref{HypoMFG-K-Bound}, and \ref{HypoMFG-K-Lip}. Let $Q$ be an equilibrium for $\MFG(K)$, set $m_t = e_{t\#} Q$ for $t \geq 0$, define $k: \mathbbm R_+ \times \bar\Omega \to \mathbbm R_+$ by $k(t, x) = K(m_t, x)$, consider the optimal control problem $\OCP(k)$, and let $\Upsilon$ and $\Upsilon_t$ be defined as in \eqref{Upsilon} and \eqref{Upsilon_t}, respectively. Then for every $t>0$, we have $m_t(\bar{\Omega} \setminus (\Gamma \cup \Upsilon_{t})) = 0$.
\end{proposition}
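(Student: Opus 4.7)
The plan is to reduce the claim to an almost immediate consequence of the alternative characterization of equilibria provided by Proposition~\ref{prop:equilibrium-support}. Write $N_t = \bar\Omega \setminus (\Gamma \cup \Upsilon_t)$, so that the goal becomes $m_t(N_t) = 0$. Since $m_t = e_{t\#} Q$ and the evaluation map $e_t : \mathbf C(\bar\Omega) \to \bar\Omega$ is continuous, it suffices to exhibit a $Q$-null set containing $e_t^{-1}(N_t)$; working with outer measures in this way also lets us bypass any concern about Borel measurability of $\Upsilon_t$ itself.

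The heart of the argument is the observation that $e_t^{-1}(N_t)$ is disjoint from $\supp(Q)$. Fix an arbitrary $\gamma \in \supp(Q)$. By Proposition~\ref{prop:equilibrium-support}, $\gamma \in \Opt(k_Q, 0, \gamma(0))$. If $\gamma(t) \in \Gamma$, then $\gamma(t) \in \Gamma \cup \Upsilon_t$, so $\gamma(t) \notin N_t$. Otherwise $\gamma(t) \in \bar\Omega \setminus \Gamma$, and since we assume $t > 0$, taking $t_0 = 0 \in [0, t)$, $x_0 = \gamma(0) \in \bar\Omega$, and using $\gamma$ itself as the optimal trajectory provides the triple required by the definition \eqref{Upsilon} of $\Upsilon$; hence $(t, \gamma(t)) \in \Upsilon$, that is, $\gamma(t) \in \Upsilon_t$, and again $\gamma(t) \notin N_t$. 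In either case $\gamma \notin e_t^{-1}(N_t)$, so $e_t^{-1}(N_t) \subset \mathbf C(\bar\Omega) \setminus \supp(Q)$, which has $Q$-measure zero.

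Combining the two steps yields $m_t(N_t) \leq Q\bigl(e_t^{-1}(N_t)\bigr) \leq Q\bigl(\mathbf C(\bar\Omega) \setminus \supp(Q)\bigr) = 0$, which is the claim. The proposition has no real technical obstacle once the support characterization of equilibria is available: it is essentially a book-keeping consequence of Proposition~\ref{prop:equilibrium-support} together with the fact that the definition of $\Upsilon_t$ is automatically witnessed, at time $t > 0$, by any equilibrium trajectory starting at time $0$. The only mildly delicate point, namely measurability of $\Upsilon_t$, is sidestepped by producing a $Q$-null \emph{superset} of $e_t^{-1}(N_t)$ rather than trying to compute $m_t(N_t)$ via a direct pushforward identity.
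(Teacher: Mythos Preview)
Your proof is correct and follows essentially the same approach as the paper: both arguments show that any trajectory in a set of full $Q$-measure (you use $\supp(Q)$ via Proposition~\ref{prop:equilibrium-support}, the paper uses the set $\OOpt = \bigcup_{x_0 \in \bar\Omega} \Opt(k, 0, x_0)$ directly from the definition of equilibrium) cannot have $\gamma(t) \in N_t$, and then conclude by pushforward. Your added care about measurability via an outer-measure argument is a nice touch that the paper leaves implicit.
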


\begin{proof}
Let $\OOpt = \bigcup_{x_0 \in \bar\Omega} \Opt(k, 0, x_0)$. Since $Q$ is an equilibrium of $\MFG(K)$, then $Q(\OOpt) = 1$. For every $t > 0$, from the definition of $\Upsilon$, one has that $\{\gamma \in \OOpt \suchthat \gamma(t) \in \bar{\Omega} \setminus (\Gamma \cup \Upsilon_{t})\} = \varnothing$, and then $m_t(\bar{\Omega}\setminus (\Gamma \cup \Upsilon_{t})) = Q\Bigl(\{ \gamma \in \OOpt \suchthat \gamma(t) \in \bar\Omega \setminus (\Gamma \cup \Upsilon_{t})\}\Bigr)=Q(\varnothing)=0$.
\end{proof}

We are now ready to provide our main result concerning the MFG system of $\MFG(K)$.

\begin{theorem}\label{Thm MFG system}
Consider the mean field game $\MFG(K)$ under assumptions \HypoOmega, \ref{HypoMFG-K-Bound}, and \ref{HypoMFG-K-Lip}. Let $m_0 \in \mathcal{P}(\bar{\Omega})$, $Q$ be an equilibrium of $\MFG(K)$ with initial condition $m_0$, $m_t \in \mathcal P(\bar\Omega)$ be defined for $t > 0$ by $m_t = e_{t\#} Q$, $k: \mathbbm R_+ \times \bar\Omega \to \mathbbm R_+$ be defined from $Q$ and $K$ by $k(t, x) = K(e_{t\#} Q, x)$, $\varphi$ be the value function of $\OCP(k)$, $\mathcal W$ be the set-valued map provided in Definition~\ref{DefW}, and $\widehat{\nabla\varphi}$ be the normalized gradient of $\varphi$ from Definition~\ref{def:normalized-gradient}. For $t > 0$, set
\[
\partial\Omega_t^- = \{x \in \partial\Omega \setminus \Gamma \suchthat \exists w \in \mathcal W(t, x) \text{ such that } w \cdot \mathbf n(x) < 0\}.
\]
Then $(m, \varphi)$ solves the MFG system
\begin{equation}\label{MFGs system}
    \left \{
    \begin{aligned}
    &\partial_t m_t(x) - \diverg\left (m_t(x) K(m_t, x) \widehat{\nabla \varphi}(t, x) \right)=0, & & \text{ in }\mathbbm R_+^*\times (\bar{\Omega} \setminus \Gamma), \\
    &-\partial_t \varphi(t,x) + \abs*{\nabla \varphi(t,x)} K(m_t, x) - 1 = 0, & & \text{ in } \mathbbm R_+\times (\bar{\Omega}\setminus \Gamma),\\
    &m_t(x) = 0, & & \text{ for } t > 0 \text{ and } x \in \partial\Omega_t^-,\\
    &\varphi(t,x) = 0, & & \text{ on } \mathbbm R_+\times \Gamma,\\
    &\nabla\varphi(t,x) \cdot \mathbf n(x) \ge 0, & & \text{ on } \mathbbm R_+\times (\partial\Omega\setminus \Gamma),\\
    &m_t  = m_0, & & \text{ in } \{0\}\times \bar \Omega,\\
    \end{aligned} \right.
\end{equation}
where the first equation is satisfied in the sense of distributions, the second and fifth equations are satisfied in the viscosity senses of Proposition~\ref{PropOCP-HJ} and Theorem~\ref{Thm viscosity boundary cond}, respectively, and the third equation is satisfied in the following sense: for every $t > 0$ and $x \in \partial\Omega_t^-$, there exists a neighborhood $N$ of $x$ such that $m_t(N) = 0$.
\end{theorem}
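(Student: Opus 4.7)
The plan is to verify each line of the system \eqref{MFGs system} separately, observing that most of them follow directly from results already established in the paper. Equations two, four, and five are, respectively, Proposition~\ref{PropOCP-HJ} and Theorem~\ref{Thm viscosity boundary cond} applied to the time-dependent dynamics $k(t,x)=K(e_{t\#}Q,x)$, which satisfies \ref{HypoOCP-k-Bound}--\ref{HypoOCP-k-Lip} thanks to the Wasserstein-Lipschitz estimate on $t\mapsto e_{t\#}Q$ proved in Proposition~\ref{prop:equilibrium-support}. The initial condition on $m$ is simply the requirement $e_{0\#}Q=m_0$ from Definition~\ref{defi:equilibrium}. It therefore remains to establish the continuity equation (first line) and the vanishing of $m_t$ near $\partial\Omega_t^-$ (third line).

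For the continuity equation, I would take a test function $\phi\in\mathbf{C}^\infty_c(\mathbbm R_+^\ast\times(\bar\Omega\setminus\Gamma);\mathbbm R)$ and prove
\[
\int_0^{+\infty}\!\!\int_{\bar\Omega}\Bigl[\partial_t\phi(t,x)-K(m_t,x)\widehat{\nabla\varphi}(t,x)\cdot\nabla\phi(t,x)\Bigr]\diff m_t(x)\diff t=0.
\]
Using $m_t=e_{t\#}Q$ and Fubini, this integral equals
\[
\int_{\mathbf C(\bar\Omega)}\!\int_0^{+\infty}\!\Bigl[\partial_t\phi(t,\gamma(t))-k(t,\gamma(t))\widehat{\nabla\varphi}(t,\gamma(t))\cdot\nabla\phi(t,\gamma(t))\Bigr]\diff t\diff Q(\gamma).
\]
For $Q$-almost every $\gamma$ we have $\gamma\in\Opt(k,0,\gamma(0))$. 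Since $\supp\phi$ is compact in $\mathbbm R_+^\ast\times(\bar\Omega\setminus\Gamma)$, the integrand vanishes outside the open interval $(0,\varphi(0,\gamma(0)))$: on $\{0\}$ by compactness away from the initial time, and beyond $\varphi(0,\gamma(0))$ because $\gamma$ is then constantly in $\Gamma$. On the inner interval, each point $(t,\gamma(t))$ belongs to the set $\Upsilon$ defined in \eqref{Upsilon}, so Corollary~\ref{coro normalized} gives $\dot\gamma(t)=-k(t,\gamma(t))\widehat{\nabla\varphi}(t,\gamma(t))$, and the inner integrand is exactly $\tfrac{\diff}{\diff t}[\phi(t,\gamma(t))]$. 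The time integral along $\gamma$ therefore telescopes to zero, and integrating against $Q$ concludes the argument. The only point requiring a little care is the joint measurability of $(t,\gamma)\mapsto\widehat{\nabla\varphi}(t,\gamma(t))$ on the set where it is used, which follows from the continuity of $\widehat{\nabla\varphi}$ on $\Upsilon$ stated in Corollary~\ref{coro:norm-grad-continuous}, together with Proposition~\ref{set grad coninuity} to discard a $Q$-negligible set of trajectories.

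For the boundary condition on $m_t$, fix $t>0$ and $x\in\partial\Omega_t^-$. By Theorem~\ref{thm Ut_0,x_0} we have $\mathcal W(t,x)=\mathcal U(t,x)$, so Proposition~\ref{prop:no-opt-in-neighborhood} provides a neighborhood $N$ of $x$ in $\bar\Omega$ such that no $\gamma\in\bigcup_{x_0\in\bar\Omega}\Opt(k,0,x_0)$ satisfies $\gamma(t)\in N$. Since $Q$ is an equilibrium, this union has full $Q$-measure, so
\[
m_t(N)=Q\bigl(\{\gamma\in\mathbf C(\bar\Omega)\suchthat\gamma(t)\in N\}\bigr)=0,
\]
which is the desired local vanishing.

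The genuinely delicate part is the telescoping argument for the continuity equation; everything else is essentially bookkeeping based on already-proved statements. The subtlety is precisely that $\widehat{\nabla\varphi}$ need not exist everywhere on $\mathbbm R_+\times(\bar\Omega\setminus\Gamma)$ and that the ODE $\dot\gamma=-k\widehat{\nabla\varphi}(\cdot,\gamma)$ is only available on the open interval $(0,\varphi(0,\gamma(0)))$; what makes the proof go through is the combination of Proposition~\ref{set grad coninuity}, which shows that the set of problematic points carries no mass $m_t$, and the compact support assumption on $\phi$, which excludes both the initial time and the target set from the integration.
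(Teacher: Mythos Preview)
Your proposal is correct and follows essentially the same route as the paper's proof: the Hamilton--Jacobi equation, its boundary conditions, and the initial condition are read off from Proposition~\ref{PropOCP-HJ}, Theorem~\ref{Thm viscosity boundary cond}, and Definition~\ref{defi:equilibrium}; the continuity equation is obtained by the chain-rule/telescoping computation along optimal trajectories using Corollary~\ref{coro normalized} and Proposition~\ref{set grad coninuity}; and the vanishing of $m_t$ near $\partial\Omega_t^-$ comes from Proposition~\ref{prop:no-opt-in-neighborhood}. The only cosmetic difference is that the paper integrates over the set $\OOpt$ of optimal trajectories while you integrate over $\mathbf C(\bar\Omega)$ and restrict to $Q$-a.e.\ $\gamma$, and you need not invoke Theorem~\ref{thm Ut_0,x_0} for the boundary condition since Proposition~\ref{prop:no-opt-in-neighborhood} is already stated in terms of $\mathcal W$.
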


\begin{proof}
Notice first that, since $K$ satisfies \ref{HypoMFG-K-Bound} and \ref{HypoMFG-K-Lip}, then $k$ satisfies \ref{HypoOCP-k-Bound} and \ref{HypoOCP-k-Lip}. The Hamilton--Jacobi equation on $\varphi$ and its boundary conditions then follow immediately from Proposition~\ref{PropOCP-HJ} and Theorem~\ref{Thm viscosity boundary cond}. The initial condition on $m_t$ follows from its definition.

Let us prove that $m_t$ satisfies the continuity equation in \eqref{MFGs system}. First, thanks to Corollary~\ref{coro:norm-grad-continuous}, $\widehat{\nabla \varphi}$ is continuous on the set $\Upsilon$ defined in \eqref{Upsilon}. Let $\xi\in \mathbf{C}_c^{\infty}(\mathbbm R_+^*\times (\bar{\Omega} \setminus \Gamma); \mathbbm R)$ be a test function. Take $\gamma \in \Opt(k, 0, x_0)$ and let $T=\varphi(0,x_{0})$. By Corollary~\ref{coro normalized}, one has $\Dot{\gamma}(t)= -K(m_t,\gamma(t)) \widehat{\nabla \varphi}(t,\gamma(t))$ for every $t\in (0,T)$. Hence
\begin{equation}\label{test func}
    \frac{\diff}{\diff t}\Big(\xi(t,\gamma(t)) \Big)=\partial_t\,\xi(t,\gamma(t))-\nabla_{x}\,\xi(t,\gamma(t)) \cdot \widehat{\nabla \varphi}(t,\gamma(t))\,K(m_t,\gamma(t)).
\end{equation}
Let us denote the set of all optimal trajectories by $\OOpt$, i.e., $\OOpt = \bigcup_{x_0 \in \bar\Omega} \Opt(k, 0, x)$. Thanks to the continuity of the right-hand side of \eqref{test func} on $\mathbbm R_+^* \times \OOpt$, one can integrate to observe that
\begin{equation*}
    \begin{aligned}
    & \int_{0}^{\infty}\int_{\OOpt}\, \frac{\diff}{\diff t}\Big(\xi(t,\gamma(t)) \Big) \diff Q(\gamma)\diff t \\
    {} = {} &\int_{0}^{\infty}\int_{\OOpt}\,\partial_t\,\xi(t,\gamma(t))\diff Q(\gamma)\diff t\\
    &{} -\int_{0}^{\infty}\int_{\OOpt}\,\nabla_{x}\,\xi(t,\gamma(t)) \cdot \widehat{\nabla \varphi}(t,\gamma(t))\,K(m_t,\gamma(t)) \diff Q(\gamma)\diff t.
    \end{aligned}
\end{equation*}
Since $\xi$ is compactly supported, the left-hand side of the above equality is zero. Hence, by using the Proposition~\ref{set grad coninuity} and the relation between $m_t$ and $Q$, one concludes that
\begin{equation*}
    \int_{0}^{\infty}\int_{\bar{\Omega}}\,\partial_t\xi(t,x)\diff m_t(x)\diff t=\int_{0}^{\infty}\int_{\bar{\Omega}} \nabla_{x}\,\xi(t,x) \cdot \widehat{\nabla \varphi}(t,x)\,K(m_t,x) \diff m_t(x)\diff t,
\end{equation*}
which is precisely the weak formulation of the continuity equation in \eqref{MFGs system}.

Finally, let us prove the boundary condition on $m_t$. Let $t > 0$ and $x \in \partial\Omega_t^-$. Then, using the definition of $m_t$ and Proposition~\ref{prop:no-opt-in-neighborhood}, there exists a neighborhood $N$ of $x$ such that
\[
m_t(N) = Q\left(\{\gamma \in \OOpt \suchthat \gamma(t) \in N\}\right) = Q(\varnothing) = 0,
\]
as required.
\end{proof}

\begin{remark}
Note that both the continuity and the Hamilton--Jacobi equations in \eqref{MFGs system} are satisfied in the spatial domain $\bar\Omega \setminus \Gamma$ and, in particular, they are satisfied up to the boundary, excluding the target set $\Gamma$. The velocity field at a point $(t, x) \in \mathbb R_+^\ast \times (\partial\Omega \setminus \Gamma)$ in the boundary is thus $- K(m_t, x) \widehat{\nabla\varphi}(t, x)$, i.e., the same as in the interior. Note also that the fifth equation in \eqref{MFGs system} shows that such a velocity field either points towards the inside of the domain or is tangent to it. In particular, contrarily to the mean field game system obtained in \cite{CannarsaMean} for mean field games with state constraints, fixed final time, and more general cost functions, no correction to the velocity field is needed in the boundary of the domain in the present model due to the particular structure of the optimization criterion solved by each agent.
\end{remark}

\bibliographystyle{abbrv}
\bibliography{main}

\end{document}